\begin{document}

\newtheorem{definition}{Definition}[section]
\newtheorem{definitions}[definition]{Definitions}
\newtheorem{lemma}[definition]{Lemma}
\newtheorem{prop}[definition]{Proposition}
\newtheorem{theorem}[definition]{Theorem}
\newtheorem{cor}[definition]{Corollary}
\newtheorem{cors}[definition]{Corollaries}
\theoremstyle{remark}
\newtheorem{remark}[definition]{Remark}
\theoremstyle{remark}
\newtheorem{remarks}[definition]{Remarks}
\theoremstyle{remark}
\newtheorem{notation}[definition]{Notation}
\theoremstyle{remark}
\newtheorem{example}[definition]{Example}
\theoremstyle{remark}
\newtheorem{examples}[definition]{Examples}
\theoremstyle{remark}
\newtheorem{dgram}[definition]{Diagram}
\theoremstyle{definition}
\newtheorem{fact}[definition]{Fact}
\theoremstyle{remark}
\newtheorem{illust}[definition]{Illustration}
\theoremstyle{remark}
\newtheorem{rmk}[definition]{Remark}
\theoremstyle{definition}
\newtheorem{question}[definition]{Question}
\theoremstyle{definition}
\newtheorem{conj}[definition]{Conjecture}

\renewcommand{\marginpar}[2][]{}

\newcommand{\stac}[2]{\genfrac{}{}{0pt}{}{#1}{#2}}
\newcommand{\stacc}[3]{\stac{\stac{\stac{}{#1}}{#2}}{\stac{}{#3}}}
\newcommand{\staccc}[4]{\stac{\stac{#1}{#2}}{\stac{#3}{#4}}}
\newcommand{\stacccc}[5]{\stac{\stacc{#1}{#2}{#3}}{\stac{#4}{#5}}}

\renewenvironment{proof}{\noindent {\bf{Proof.}}}{\hspace*{3mm}{$\Box$}{\vspace{9pt}}}

\title{Modules with irrational slope over tubular algebras}
\author{Richard Harland\footnote{richard.harland@gmail.com} and Mike Prest\footnote{School of Mathematics, Alan Turing Building, University of Manchester, Manchester M13 9PL, UK, mprest@manchester.ac.uk}}

\maketitle

\abstract{Let $A$ be a tubular algebra and let $r$ be a positive irrational. Let ${\mathcal D}_r$ be the definable subcategory of $A$-modules of slope $r$. Then the width of the lattice of pp formulas for ${\mathcal D}_r$ is $\infty$. It follows that if $A$ is countable then there is a superdecomposable pure-injective module of slope $r$.\footnote{16G20, 16G70, 03C60; tubular algebra, pure-injective module, definable subcategory, pp formula, width, superdecomposable module}}

\tableofcontents

\section{Introduction}\label{intro}

The tubular algebras constitute a particular class of finite-dimensional algebras of global dimension 2, see \cite{RinTame}; their representation theory has a geometric interpretation, see e.g.~\cite{GeiLenz}, and there is continuing interest in understanding their representations, see e.g.~\cite{BarSchr}, \cite{ReiRin}.  The finite-dimensional modules over these algebras were described in \cite[Chpt.~5]{RinTame}.  There it was shown that the indecomposable finite-dimensional modules fall into families of Auslander-Reiten components, these families being parametrised by ${\mathbb Q}^\infty_0$, the non-negative rationals augmented by a maximal element $\infty$; moreover all non-zero morphisms go from left to right; see Theorem \ref{canonmodcat} for the precise statement.

Here we are interested in the infinite-dimensional representation theory of tubular algebras.  The theorem referred to above associates to each finite-dimensional indecomposable module (with just a few exceptions at the ``extremes" of the category of modules) an ``index" or  ``slope" which is a non-negative rational or $\infty$.   It had been observed, e.g.~in \cite{PreMor}, that if the underlying field is countable then there are uncountably many indecomposable pure-injective modules, indeed (e.g., \cite{PreCanon}) for each irrational cut, $r$, of ${\mathbb Q}^\infty_0$ there is at least one indecomposable, necessarily infinite-dimensional, pure-injective module of slope $r$; that result is considerably strengthened here.

It is a remarkable theorem of Reiten and Ringel \cite{ReiRin} that, over these algebras, every infinite-dimensional indecomposable module (whether pure-injective or not) has a slope.  None of the indecomposable pure-injective modules with irrational slope has been explicitly described as yet but it is shown here that there are a great many of these of a given slope, at least if the underlying field $k$ is countable. In that case, for each irrational $r$ there are continuum many indecomposable pure-injective modules of slope $r$.  We remark that finite-dimensional modules are pure-injective and, in general, the pure-injective modules form what seems to be a tractable class of modules with properties strongly related to those of the category of finite-dimensional modules (see, e.g., \cite{PreNBK}, \cite{JeLe}).

What we actually prove is that if ${\mathcal D}_r$ is the definable subcategory of modules of slope $r$ then the width, in the sense of Ziegler \cite{Zie}, of the corresponding lattice of pp-definable subgroups is undefined.  This is a property that can be seen in the category of finite-dimensional modules, see the comments after \ref{KGatr}.  It follows in the case that $k$ is countable that there is, for each positive irrational $r$, a superdecomposable pure-injective module - one without any indecomposable direct summand - of slope $r$, as well as $2^{\aleph_0}$ many indecomposable pure-injectives of slope $r$.  (Countability of $k$ is a current restriction, in that the general implication between the lattice of pp-definable subgroups having undefined width and the existence of superdecomposable pure-injectives, as well as many indecomposable pure-injectives, is, in one direction, proved only if the field is countable.)

The results in this paper are mostly due to the first author and are contained in his doctoral thesis \cite{Har}, the research for which was supported by a EPSRC DTA scholarship and supervised by the second author.  The latter prepared this paper for publication and, while doing so, extended some of the results (mainly in Section \ref{secfurther}).

\section{Background}

We have to assume a certain amount of background from both representation theory of finite-dimensional algebras and the model theory of modules.  Here we recall some definitions and state results that we will use but, for more details, the reader should consult the references we quote.  We assume throughout that the base field $k$ is algebraically closed and that all algebras appearing are finite-dimensional $k$-algebras; when it matters they should be assumed to be basic and connected.

\subsection{Tubular algebras}

We refer to \cite{RinTame} or \cite{AssSimSkow1, SimSkow2, SimSkow3} for most background and just recall what we will need here.  If $A$ is a finite-dimensional $k$-algebra then we write $A\mbox{-}{\rm Mod}$, $A\mbox{-}{\rm mod}$, $A\mbox{-}{\rm ind}$ for, respectively, the category of left $A$-modules, the category of finite-dimensional left $A$-modules and the set of isomorphism classes of indecomposable finite-dimensional $A$-modules.

By ${\rm add}({\mathcal X})$ we denote the closure of the class ${\mathcal X}$ of modules under finite direct sums and direct summands.  We also write, for instance, $(M,{\mathcal X})=0$ to mean $(M,X)=0$ for all $X\in {\mathcal X}$.  (Usually we abbreviate ${\rm Hom}_R(M,X)$ to $(M,X)$.)

If the simple $A$-modules are $S_1,\dots, S_n$ with corresponding projective covers $P_1,\dots , P_n$ then the {\bf dimension vector} of a module $M$ is given by $\underline{\rm dim} (M) =({\rm dim}(P_1,M),\dots ,{\rm dim}(P_n,M))$.  Note that ${\rm dim}(P_i,M)$ equals the number of occurrences of $S_i$ in a composition series of $M$ and $\underline{\rm dim}$ gives an isomorphism of the Grothendieck group ${\rm K}_0(A)$ with ${\mathbb Z}^n$.

We will use the standard bilinear form $\langle -,- \rangle$ defined on ${\rm K}_0(A)$ and the fact that, if $A$ has finite global dimension, then $\langle \underline{\rm dim}(M), \underline{\rm dim}(N)\rangle =\sum_{i=0}^{\infty} (-1)^i {\rm dim}\, {\rm Ext}^i(M,N)$ (see, e.g., \cite[III.3.13]{AssSimSkow1}).  In the cases that we look at, either $M$ will have projective dimension 1 or $N$ will have injective dimension 1, so only the ${\rm Hom}$ and ${\rm Ext}^1$ terms will be non-zero.  The corresponding quadratic form is given by $\chi_A(x) =\langle x, x\rangle$ and the {\bf radical} of $\chi_A$ is the subgroup ${\rm rad}(\chi_A) =\{ x: \chi_A(x) =0\}$ of ${\rm K}_0(A)$; its elements are the {\bf radical} vectors.

We recall the following special case of the Auslander-Reiten formula (see, e.g., \cite[IV, 2.15]{AssSimSkow1}).

\begin{theorem}\label{AusReit}\marginpar{AusReit} If $A$ is a finite-dimensional $K$-algebra and $M,N\in {\rm mod}\mbox{-}A$ with $M$ of projective dimension $\leq 1$ then ${\rm Ext}^1(M,N)\simeq (N,\tau M)$ as vector spaces; if the injective dimension of $N$ is $\leq 1$ then ${\rm Ext}^1(M,N) \simeq (\tau^{-1}N, M)$, where $\tau$ denotes Auslander-Reiten translate.
\end{theorem}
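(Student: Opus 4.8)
The plan is to obtain the first isomorphism directly from the defining property of the transpose, and then to deduce the second from it by applying the $k$-duality $D={\rm Hom}_k(-,k)$. First I would fix a minimal projective presentation $P_1\xrightarrow{d}P_0\to M\to 0$ of $M$. Since ${\rm pd}\,M\le 1$ the kernel of $P_0\to M$ is projective, and because the projective cover of a projective module is an isomorphism, $P_1$ is identified with that kernel; hence $d$ is injective and we have a short exact sequence $0\to P_1\xrightarrow{d}P_0\to M\to 0$. Recall that, with $(-)^\ast={\rm Hom}_A(-,A)$, the transpose ${\rm Tr}\,M$ is by definition ${\rm coker}(d^\ast\colon P_0^\ast\to P_1^\ast)$ and that $\tau M=D\,{\rm Tr}\,M$.

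Applying ${\rm Hom}_A(-,N)$ to the short exact sequence, and using ${\rm Ext}^1(P_0,N)=0$, gives ${\rm Ext}^1(M,N)\cong{\rm coker}\big({\rm Hom}_A(P_0,N)\to{\rm Hom}_A(P_1,N)\big)$. Now I would invoke the natural isomorphism ${\rm Hom}_A(P,N)\cong N\otimes_A P^\ast$ for $P$ finitely generated projective, together with the right exactness of $N\otimes_A-$ applied to the exact sequence $P_0^\ast\to P_1^\ast\to{\rm Tr}\,M\to 0$, to obtain ${\rm Ext}^1(M,N)\cong N\otimes_A{\rm Tr}\,M$. Finally, the standard isomorphism $D(N\otimes_A X)\cong{\rm Hom}_A(X,DN)$ (a case of tensor--hom adjointness) together with the fact that $D$ is a duality, hence induces a bijection on Hom-spaces with $D^2={\rm id}$, gives
\[ D\,{\rm Ext}^1(M,N)\ \cong\ {\rm Hom}_A({\rm Tr}\,M,DN)\ \cong\ {\rm Hom}_A(N,D\,{\rm Tr}\,M)\ =\ (N,\tau M); \]
as all the spaces are finite-dimensional over $k$, this yields ${\rm Ext}^1(M,N)\cong(N,\tau M)$.

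For the second statement I would not rerun the argument but transfer it to left modules via $D$. Since $D$ is an exact duality, ${\rm Ext}^1_A(M,N)\cong{\rm Ext}^1_A(DN,DM)$, and the hypothesis ${\rm id}\,N\le 1$ is precisely ${\rm pd}\,(DN)\le 1$. Applying the first part inside $A\mbox{-}{\rm mod}$ gives ${\rm Ext}^1_A(DN,DM)\cong(DM,\tau(DN))$; and since $\tau(DN)=D\,{\rm Tr}(DN)=D\,\tau^{-1}N$, one further use of the duality on Hom-spaces identifies $(DM,D\,\tau^{-1}N)$ with $(\tau^{-1}N,M)$, as required.

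No single step is hard; the real work is bookkeeping — keeping straight which of $D$, $(-)^\ast$ and ${\rm Tr}$ interchange left and right modules, and assembling the standard natural isomorphisms without a side error. The one place the hypothesis genuinely enters is in arranging that the minimal presentation is exact on the left, so that the cokernel computed above is \emph{exactly} ${\rm Tr}\,M$ and one lands on $\tau M$ on the nose rather than only up to injective summands. (Alternatively one could quote the general Auslander--Reiten formula ${\rm Ext}^1(M,N)\cong D\,\overline{{\rm Hom}}(N,\tau M)$, where $\overline{{\rm Hom}}$ denotes ${\rm Hom}$ modulo morphisms factoring through an injective, and check that ${\rm pd}\,M\le 1$ forces this quotient to be all of ${\rm Hom}(N,\tau M)$; the direct route above sidesteps that.)
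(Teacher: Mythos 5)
Your argument is correct. Note that the paper does not prove this statement at all: it is quoted as background, with a reference to Assem--Simson--Skowro\'{n}ski [IV, 2.15], so there is no in-paper proof to compare against. What you have written is essentially the standard derivation behind that cited result: the hypothesis ${\rm pd}\,M\le 1$ makes the minimal presentation left-exact (via Schanuel), the cokernel computation plus the natural isomorphism $\mathrm{Hom}_A(P,N)\cong N\otimes_A P^\ast$ for finitely generated projective $P$ gives $\mathrm{Ext}^1(M,N)\cong N\otimes_A\mathrm{Tr}\,M$, and tensor--hom adjointness with the duality $D$ lands on $(N,\tau M)$; the second isomorphism then follows formally by applying $D$, using ${\rm id}\,N\le 1\Leftrightarrow{\rm pd}\,DN\le 1$ and $\tau(DN)=D\tau^{-1}N$. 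All the handedness bookkeeping checks out, and the passage from $D\,\mathrm{Ext}^1(M,N)$ to $\mathrm{Ext}^1(M,N)$ is justified since all spaces involved are finite-dimensional, which is exactly the level of the statement ("as vector spaces") used in the paper.
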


We don't give the, rather technical, definition of tubular algebra here (there are examples in Section \ref{sechomog}) - for that see \cite[Chapter 5]{RinTame} or \cite[XIX, 3.19]{SimSkow3}; these include the canonical tubular algebras which are directly defined in terms of their quivers, see \cite[\S 3.7]{RinTame} or \cite[XX, 3.14]{SimSkow3}.  Most of what we need about the representation theory of these algebras is recalled below.

\begin{theorem}\label{canonmodcat}\marginpar{canonmodcat} \cite[\S 5.2]{RinTame} Let $A$ be a tubular algebra; then $A\mbox{-}{\rm ind} = {\mathcal P}_0 \, \cup \, \bigcup \{ {\mathcal T}_q: q \in {\mathbb Q}_0^\infty\} \cup {\mathcal Q}_\infty$ (disjoint union) where each ${\mathcal T}_q$ is a tubular family separating ${\mathcal P}_q ={\mathcal P}_0 \,  \cup \bigcup \, \{ {\mathcal T}_{q'}: q'<q\}$ from ${\mathcal Q}_q = \bigcup \{ {\mathcal T}_{q'}: q< q' \} \cup {\mathcal Q}_\infty$.
\end{theorem}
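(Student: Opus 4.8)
The plan is to bootstrap from the classification of modules over tame concealed algebras (see \cite{RinTame}, Ch.~4) together with a symmetry of $\mathrm{mod}\,A$ that cyclically shifts the tubular families. Recall that by definition a tubular algebra $A$ is a branch enlargement of a tame concealed algebra; more useful here is that $A$ may be written simultaneously as a tubular extension of a tame concealed algebra $C_0$ and as a tubular coextension of a tame concealed algebra $C_\infty$, both of the same tubular type, and that its Euler form $\chi_A$ is positive semidefinite with $\mathrm{rad}(\chi_A)$ free abelian of rank $2$. Fix positive generators $h_0,h_\infty$ of $\mathrm{rad}(\chi_A)$, arising from the null roots of $C_0$ and of $C_\infty$ respectively. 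Since $\chi_A$ vanishes on $\mathrm{rad}(\chi_A)$, the assignments $\partial_0=\langle h_\infty,-\rangle$ and $\partial_\infty=\langle -,h_0\rangle$ (up to sign and the usual normalisation) give two well-defined linear ``defect'' forms on $\mathrm{K}_0(A)$, each restricting on $C_0$- resp.\ $C_\infty$-modules to the classical defect.

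First I would pin down the two extreme pieces. Applying the structure theory of tubular extensions to $C_0\subseteq A$: the postprojective $C_0$-modules remain indecomposable over $A$ and form a component $\mathcal{P}_0$, on which $\partial_0<0$; and the stable tubes of $\mathrm{mod}\,C_0$, enlarged by the attached branches, assemble into a $\mathbb{P}_1(k)$-family $\mathcal{T}_0$ of tubes which is separating in $\mathrm{mod}\,A$ and on which $\partial_\infty=0$, $\partial_0>0$. Dually, regarding $A$ as a tubular coextension of $C_\infty$ produces the preinjective component $\mathcal{Q}_\infty$ (where $\partial_\infty>0$) together with a separating tubular $\mathbb{P}_1(k)$-family $\mathcal{T}_\infty$ (where $\partial_0=0$, $\partial_\infty<0$). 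What then remains to organise is the full subcategory $\mathcal{R}$ of indecomposables $M$ with $\partial_0M>0$ and $\partial_\infty M<0$; for such $M$ set $\gamma(M)=-\partial_\infty M/\partial_0 M\in(0,\infty)\cap\mathbb{Q}$, the value being rational because $\underline{\dim}\,M\in\mathbb{Z}^n$. Extending the notation by $\gamma=0$ on $\mathcal{T}_0$ and $\gamma=\infty$ on $\mathcal{T}_\infty$, the claim becomes: for each $q\in\mathbb{Q}^\infty_0$ the class $\mathcal{T}_q$ of indecomposables of index $q$ is a separating $\mathbb{P}_1(k)$-family of tubes of the given tubular type, separating $\mathcal{P}_q$ from $\mathcal{Q}_q$.

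The heart of the argument is the passage between different rational slopes. Here one uses the extra rigidity of tubular algebras: there is a family of tilting $A$-modules, equivalently a group of autoequivalences of the derived category $D^b(A)$ (conveniently realised, via the identification of $D^b(A)$ with the derived category of coherent sheaves on a weighted projective line of tubular type, cf.\ \cite{GeiLenz}), whose induced action on the set of possible slopes is by M\"obius transformations and is transitive on $\mathbb{Q}\cup\{\infty\}$. Picking such a transformation $F$ carrying slope $0$ to a prescribed rational $q$, one checks that, after the appropriate twist, $F$ restricts to an equivalence between a torsion class of $\mathrm{mod}\,A$ containing $\mathcal{T}_0$ and one containing $\mathcal{T}_q$, thereby transporting the already-established tube structure, separation property, and tubular type of $\mathcal{T}_0$ onto $\mathcal{T}_q$. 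Finally, the total ordering --- every nonzero map between indecomposables has its target of index at least that of its source, with $\mathcal{P}_0$ lowest and $\mathcal{Q}_\infty$ highest --- follows from $\chi_A\geq0$ together with the $\mathrm{Ext}$--$\mathrm{Hom}$ translation of Theorem~\ref{AusReit}, and this ordering upgrades each separating family $\mathcal{T}_q$ to separating $\mathcal{P}_q=\mathcal{P}_0\cup\bigcup_{q'<q}\mathcal{T}_{q'}$ from $\mathcal{Q}_q=\bigcup_{q'>q}\mathcal{T}_{q'}\cup\mathcal{Q}_\infty$. Exhaustiveness is then immediate: an indecomposable $M$ outside $\mathcal{P}_0\cup\mathcal{Q}_\infty$ has, by the signs of its two defects, a well-defined index $q$, and separation forces $M\in\mathcal{T}_q$; disjointness of all the pieces is visible from the index.

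The step I expect to be the real obstacle is the middle one: proving that each $\mathcal{T}_q$ genuinely is a separating tubular family. The transport-of-structure route is clean in outline but delicate to execute --- one must control exactly which objects of $D^b(A)$ are carried back into $\mathrm{mod}\,A$ by the chosen autoequivalence (only a bounded range of slopes is), and verify that the separating property is not lost in the process. The alternative, Ringel's original hands-on approach, instead exploits the cyclic symmetry $q\mapsto q+1$ to reduce to $q\in[0,1]$ and then carries out explicit computations with the Euler form and the combinatorics of the branches; either way, this is where essentially all of the work lies.
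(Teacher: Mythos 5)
This statement is not proved in the paper at all: it is quoted verbatim from Ringel \cite[\S 5.2]{RinTame}, so there is no internal argument to compare against, only Ringel's original proof (and the machinery the paper later recalls in Section \ref{secmove}). Measured against that, your outline follows essentially the standard route: the extreme families ${\mathcal P}_0,{\mathcal T}_0$ and ${\mathcal T}_\infty,{\mathcal Q}_\infty$ from the tubular extension/coextension structure over tame concealed algebras, the index defined from the two defect forms $\langle h_\infty,-\rangle$ and $\langle -,h_0\rangle$, and transport of the separating tubular family from slope $0$ (or $\infty$) to an arbitrary rational slope by tilting. Ringel does this with shrinking functors between \emph{different} tubular algebras (exactly the functors of \ref{tilt}, \ref{tiltequiv}, with the M\"obius-type action $\overline{\sigma}$ on slopes), whereas you phrase it as a transitive group of autoequivalences of $D^b(A)$ via the weighted projective line of \cite{GeiLenz}; that variant is legitimate but is a later framework resting on nontrivial inputs (the derived equivalences and the identification of which complexes return to ${\rm mod}\,A$) that your sketch does not supply.

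The genuine gap is the one you yourself flag, and it is not a small remainder but the content of the theorem: you never establish that for a general rational $q$ the class ${\mathcal T}_q$ is a tubular ${\mathbb P}_1(k)$-family, nor the separation property (in particular the factorisation of every map from ${\mathcal P}_q$ to ${\mathcal Q}_q$ through a single prescribed tube of ${\mathcal T}_q$ --- this is strictly stronger than the Hom-vanishing statements and cannot be extracted from $\chi_A\geq 0$ plus \ref{AusReit} alone). Your exhaustiveness step also presupposes unproved sign information: that every indecomposable outside ${\mathcal P}_0\cup{\mathcal Q}_\infty$ has $\partial_0>0$ and $\partial_\infty<0$ (so that the index is defined and lies in $(0,\infty)$) is itself a consequence of the analysis of the extension/coextension module categories, not an automatic fact; and in the transport step one must verify that the chosen (auto)equivalence matches up the torsion pairs so that the separating property survives, which is precisely where Ringel's careful bookkeeping with shrinking functors (or, in the derived picture, control of the heart) is needed. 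So the proposal is a correct plan along the classical lines, but as a proof it defers exactly the part for which the paper cites \cite{RinTame}.
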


This means that $0= ({\mathcal T}_q, {\mathcal P}_q) = ({\mathcal Q}_q, {\mathcal T}_q) = ({\mathcal Q}_q, {\mathcal P}_q)$, that $({\mathcal T}(\rho), {\mathcal T}(\rho'))=0$ for any distinct tubes ${\mathcal T}(\rho), {\mathcal T}(\rho')$ in ${\mathcal T}_q$ and that every map from a module in ${\mathcal P}_q$ to one in ${\mathcal Q}_q$ factors through ${\rm add}({\mathcal T}(\rho))$ where ${\mathcal T}(\rho)$ is any tube in ${\mathcal T}_q$.  We also use the following.

\begin{fact}\label{pdid1}\marginpar{pdid1} (\cite[3.1.5]{RinTame}) Every module in $\bigcup \{ {\mathcal T}_q: q \in {\mathbb Q}^+\}$ has both injective and projective dimension $1$.
\end{fact}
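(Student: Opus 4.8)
I sketch an argument; the result is \cite[3.1.5]{RinTame}. Since $A$ has global dimension $2$, for any module $M$ the syzygy $\Omega M$ has projective dimension $\le 1$, so \ref{AusReit} applies to it and gives, as vector spaces, ${\rm Ext}^2(M,{}_AA)\simeq{\rm Ext}^1(\Omega M,{}_AA)\simeq({}_AA,\tau\Omega M)\simeq\tau\Omega M$. As ${\rm pd}(M)\le 1$ is equivalent to ${\rm Ext}^2(M,{}_AA)=0$ (write ${}_AA$ as a sum of indecomposable projectives and use that each simple is a top of one, together with ${\rm gl.dim}\,A=2$), this says ${\rm pd}(M)\le 1$ iff $\tau\Omega M=0$, i.e.\ iff $\Omega M$ is projective; dually ${\rm id}(M)\le 1$ iff the cosyzygy $\Omega^{-1}M$ is injective. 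So the task is: for $M$ indecomposable in ${\mathcal T}_q$ with $q\in{\mathbb Q}^+$, show $\Omega M$ is projective and $\Omega^{-1}M$ is injective.

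First I would reduce to quasi-simple modules. Each component of ${\mathcal T}_q$ is a standard stable tube, so an indecomposable $M$ of quasi-length $\ell\ge 2$ fits into short exact sequences $0\to M'\to M\to E\to 0$ and $0\to E'\to M\to M''\to 0$ with $E,E'$ quasi-simple in that tube and $M',M''$ of quasi-length $\ell-1$. Projective and injective dimension are subadditive along short exact sequences (long exact ${\rm Ext}$-sequence), so induction on quasi-length reduces ${\rm pd}(M)\le 1$ and ${\rm id}(M)\le 1$ to the case of a quasi-simple module.

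Next I would invoke the structure theory behind \ref{canonmodcat}: for each rational slope $q$ there is a tame concealed algebra $B_q$ such that ${\mathcal T}_q$, as a family of tubes, is exactly the separating ${\mathbb P}^1(k)$-tubular family of regular $B_q$-modules, with $A$ built from $B_q$ by branch extensions and coextensions at regular modules (for $q=0$, $B_0$ is the tame concealed algebra of which $A$ is a tubular extension; for $q=\infty$, $B_\infty$ the one of which $A$ is a tubular coextension; the interior cases are reached after a preliminary tilt). Now $B_q$ is tilted from a tame hereditary algebra by a preprojective tilting module $T$, and also by a preinjective one $T'$; the regular modules lie in ${\rm Gen}(T)$ and in the torsion-free class of the dual tilt, so they pass into the torsion-free class $\mathcal{Y}(T)$, resp.\ the torsion class $\mathcal{X}(T')$, of the tilted algebra $B_q$, and for a tilted algebra the modules of $\mathcal{Y}(T)$ have projective dimension $\le 1$ and those of $\mathcal{X}(T')$ have injective dimension $\le 1$. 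Hence every quasi-simple of ${\mathcal T}_q$ has projective and injective dimension $\le 1$ as a $B_q$-module.

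The last step, and the main obstacle, is to transfer these bounds from $B_q$ to $A$ along the branch (co)extensions. At a one-point extension $B\mapsto B[N_0]$ the projective dimension of a $B$-module $E$ is unchanged when ${\rm Hom}_B(N_0,E)=0$ and can otherwise rise by one, so one has to check this vanishing for the pairs that occur — and this is exactly where $q\in{\mathbb Q}^+$ matters: the modules $N_0$ used come from tubular families strictly on one side of $q$, and the separation in \ref{canonmodcat} forces the relevant ${\rm Hom}$- and ${\rm Ext}$-groups against $E$ to vanish. (One already has, for $M$ as above, $(M,P)=0$ for every projective $P$ and $(I,M)=0$ for every injective $I$, since all indecomposable projectives lie in ${\mathcal P}_0$ while ${\mathcal T}_q\subseteq{\mathcal Q}_0$, and all indecomposable injectives in ${\mathcal Q}_\infty$ while ${\mathcal T}_q\subseteq{\mathcal P}_\infty$; similar vanishings cascade through the branches.) Pushing this bookkeeping through is the content of \cite[3.1.5]{RinTame}. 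Alternatively, one can argue via a derived equivalence of $A$ with a canonical algebra of the same tubular type, under which each interior ${\mathcal T}_q$ is carried to the central separating tubular family — whose modules are classically known to have projective and injective dimension $1$ — once one observes that the equivalence restricts on the subcategories in question to functors induced by honest tilting modules, so that the homological dimensions transfer.
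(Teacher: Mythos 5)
The paper itself offers no proof here: the statement is quoted as a Fact from \cite[3.1.5]{RinTame}, so your sketch has to stand on its own, and it does not. Your first two reductions are fine but inessential (and the upgrade from dimension $\leq 1$ to exactly $1$ is just the remark that no module in ${\mathcal T}_q$, $q\in{\mathbb Q}^+$, is projective or injective); the entire content of the statement sits in your third paragraph, and there the decisive step is missing. You obtain the bounds only over the tame concealed algebra $B_q$, and the passage to $A$ itself --- through branch (co)extensions, a ``preliminary tilt'', or a derived equivalence --- is exactly what you leave as ``bookkeeping'' and then defer back to \cite[3.1.5]{RinTame}; that is the theorem to be proved, not something one may cite inside its own proof. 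The deferral is not harmless: for interior slopes $q$ the family ${\mathcal T}_q$ is not a family of $B_q$-modules sitting inside $A\mbox{-}{\rm mod}$ via one-point extensions, it is matched with a tubular family over another algebra only through tilting (shrinking) functors, and ${\rm Hom}(T,-)$, ${\rm Ext}^1(T,-)$ do not transport projective or injective dimension in the naive way your last sentence suggests --- establishing that transport is precisely the ${\rm Hom}$/${\rm Ext}$-vanishing bookkeeping you have omitted. (A smaller slip: for a tubular algebra the indecomposable projectives lie in ${\mathcal P}_0\cup{\mathcal T}_0$, not all in ${\mathcal P}_0$, and dually the injectives lie in ${\mathcal T}_\infty\cup{\mathcal Q}_\infty$; the vanishings you assert for $q\in{\mathbb Q}^+$ still hold, but via the separation property, not for the reason you give.)

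Note also that once the structure in \ref{canonmodcat} is available there is a two-line argument, presumably what underlies \cite[3.1.5]{RinTame}, which makes your syzygy computation and the induction on quasi-length unnecessary. For $X$ indecomposable in ${\mathcal T}_q$ with $q\in{\mathbb Q}^+$: the indecomposable projectives lie in ${\mathcal P}_0\cup{\mathcal T}_0\subseteq{\mathcal P}_q$ and the indecomposable injectives in ${\mathcal T}_\infty\cup{\mathcal Q}_\infty\subseteq{\mathcal Q}_q$, so the tubes of ${\mathcal T}_q$ are stable and $\tau X,\tau^{-1}X\in{\mathcal T}_q$; since $({\mathcal Q}_q,{\mathcal T}_q)=0$ we get ${\rm Hom}(D(A),\tau X)=0$, and since $({\mathcal T}_q,{\mathcal P}_q)=0$ we get ${\rm Hom}(\tau^{-1}X,A)=0$; by the standard characterisations (${\rm pd}\,X\leq 1$ iff ${\rm Hom}(D(A),\tau X)=0$ and ${\rm id}\,X\leq 1$ iff ${\rm Hom}(\tau^{-1}X,A)=0$, see \cite[IV.2.7]{AssSimSkow1}, which is the same circle of ideas as \ref{AusReit}) both dimensions are $\leq 1$, hence equal to $1$. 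If you prefer to keep your route through $B_q$, you must actually verify the one-point-extension conditions (${\rm Hom}(N_0,E)=0$ at each extension, and the dual condition at each coextension) for the modules in the stable families, and you must explain separately how the interior slopes are reached, since there the identification is by tilting rather than by (co)extension.
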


If $A$ is a tubular algebra then there is (\cite[\S 5.1]{RinTame}) a canonical pair $h_0, h_\infty$ of linearly independent radical vectors.  These generate a subgroup of ${\rm rad}(\chi_A)$ of finite index.  Also $\langle h_\infty,h_0\rangle = - \langle h_0,h_\infty \rangle $. For any vector $x$ define the {\bf index} of $M\in {\rm ind}\mbox{-}A$ to be the ratio $\iota(\underline{\rm dim}(M)) =- \dfrac{\langle h_0,\underline{\rm dim}(M)\rangle}{\langle h_\infty, \underline{\rm dim}(M) \rangle}$; then, if $M$ is in neither ${\mathcal P}_0$ nor ${\mathcal Q}_\infty$, we have $\iota(\underline{\rm dim}(M) =q$ iff $M\in {\mathcal T}_q$.

There is a more general notion, not confined to finitely generated modules:  we say that the {\bf slope} of a module $M$ is $r\in {\mathbb R}_0^\infty$ if $(M,{\mathcal P}_r)=0 =({\mathcal Q}_r, M)$ where ${\mathcal P}_r$ and ${\mathcal Q}_r$ are defined as in \ref{canonmodcat} but with $r$ in place of the rational $q$.  By \ref{AusReit} and the fact that ${\mathcal P}_r$ and ${\mathcal Q}_r$, being unions of Auslander-Reiten components, are closed under $\tau^{\pm 1}$, this is equivalent to ${\rm Ext}^1({\mathcal P}_r, M) =0 = ({\mathcal Q}_r,M)$.  That is clear for finite-dimensional $M$; for general $M$ we can argue, for example, as follows.  First suppose that $(M,{\mathcal P}_r)=0$; take $N\in {\mathcal P}_r$, of slope $q$ say.  By \ref{ReiRinLem} below, $M$ is a direct limit of submodules $M'$ of slope $>q$.  For each such $M'$ we have ${\rm Ext}^1(N,M')\simeq (M',\tau N)=0$ so, since ${\rm Ext}^1(N,-)$ commutes with direct limits (\cite[Thm.~2]{Bro2}), we deduce ${\rm Ext}^1(N,M)=0$.  For the converse, suppose that there is $N\in {\mathcal P}_r$ with $(M,N)\neq 0$.  The image, $M''$ say, of some non-zero morphism from $M$ to $N$ is finite-dimensional, so ${\rm Ext}^1(\tau^{-1}N,M'')\neq 0$ (we may assume that the slope of $N$ is $>0$, so $\tau^{-1}N$ is defined), and we have an exact sequence $0\rightarrow M' \rightarrow M \rightarrow M'' \rightarrow 0$.  By \ref{pdid1}, ${\rm Ext}^2(\tau^{-1}N,-)=0$ so ${\rm Ext}^1(\tau^{-1}N,M)$ maps onto ${\rm Ext}^1(\tau^{-1}N,M'')$, hence ${\rm Ext}^1({\cal P}_r,M)\neq 0$, as required.

\begin{theorem}\label{indecslope}\marginpar{indecslope} \cite[13.1]{ReiRin} Every indecomposable module over a tubular algebra has a slope.
\end{theorem}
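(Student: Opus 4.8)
The plan is to reduce the statement to the assertion that an indecomposable module cannot ``straddle'' a cut, and then read off the slope. For $r\in{\mathbb R}_0^\infty$ call $r$ a \emph{lower bound} of $M$ if $({\mathcal Q}_r,M)=0$, and an \emph{upper bound} if $(M,{\mathcal P}_r)=0$ (equivalently, by \ref{AusReit}, if ${\rm Ext}^1({\mathcal P}_r,M)=0$). Since ${\mathcal Q}_r$ shrinks and ${\mathcal P}_r$ grows as $r$ increases, the lower bounds of $M$ form an up-set of ${\mathbb R}_0^\infty$ and the upper bounds a down-set, and by definition $M$ has slope $r$ exactly when $r$ is both. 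Set $\ell(M)=\inf\{r:({\mathcal Q}_r,M)=0\}$ and $u(M)=\sup\{r:(M,{\mathcal P}_r)=0\}$. Using that every finite-dimensional indecomposable has rational index — so that any $X\in{\mathcal Q}_{r_0}$ already lies in ${\mathcal Q}_r$ for some real $r>r_0$, and dually — one checks that $\ell(M)$ is itself a lower bound and $u(M)$ an upper bound. Hence, modulo the behaviour at the two ``extreme'' ends (which is as in the finite-dimensional picture of \ref{canonmodcat}), $M$ has a slope as soon as $\ell(M)\le u(M)$, and the theorem comes down to: for indecomposable $M$ there is no $r$ with $({\mathcal Q}_r,M)\ne0$ and $(M,{\mathcal P}_r)\ne0$ simultaneously.

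To attack this, suppose $M$ is indecomposable and there are nonzero maps $f\colon X\to M$ and $g\colon M\to Y$ with $X\in{\mathcal Q}_r$, $Y\in{\mathcal P}_r$ (both necessarily finite-dimensional indecomposables). By the symmetry of the tubular situation under tilting (\cite[Chapter 5]{RinTame}) we may assume the indices of $X$ and $Y$ are positive rationals $q_Y<r<q_X$, so by \ref{pdid1} each of $X,Y$ has projective and injective dimension $1$. Fix a rational $s\in(q_Y,q_X)$. Since $A\mbox{-}{\rm ind}={\mathcal P}_s\sqcup{\mathcal T}_s\sqcup{\mathcal Q}_s$ with $({\mathcal T}_s\cup{\mathcal Q}_s,{\mathcal P}_s)=0$ by \ref{canonmodcat}, the classes ${\mathcal X}=\{M'\in A\mbox{-}{\rm Mod}:(M',{\mathcal P}_s)=0\}$ and ${\mathcal Y}=\{N:({\mathcal X},N)=0\}$ form a torsion pair on $A\mbox{-}{\rm Mod}$ with $X\in{\mathcal X}$ and $Y\in{\mathcal Y}$. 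Then ${\rm im}(f)\in{\mathcal X}$ (a quotient of $X$) and ${\rm im}(g)\in{\mathcal Y}$ (a submodule of $Y$); the composite $gf\colon X\to Y$ vanishes by \ref{canonmodcat}, so ${\rm im}(f)$ sits inside the torsion submodule $tM$ of $M$ (whence $tM\ne0$), while $g$ annihilates $tM$ (its image there lies in ${\mathcal X}\cap{\mathcal Y}=0$), so $M/tM$ surjects onto ${\rm im}(g)\ne0$ (whence $M/tM\ne0$). Since $M$ is indecomposable the canonical sequence $0\to tM\to M\to M/tM\to0$ does not split, so ${\rm Ext}^1(M/tM,tM)\ne0$.

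The contradiction sought is that this torsion pair is in fact \emph{split}, i.e.\ ${\rm Ext}^1({\mathcal Y},{\mathcal X})=0$. On finite-dimensional modules this follows from \ref{AusReit} together with the $\tau^{\pm1}$-invariance of ${\mathcal P}_s$ and ${\mathcal Q}_s$: for finite-dimensional $V$ of index $<s$ and $W$ of index $\ge s$ one gets ${\rm Ext}^1(V,W)\cong(W,\tau V)=0$, the projective resp.\ injective dimension bound needed for \ref{AusReit} being supplied by \ref{pdid1}. Propagating this vanishing to all of $A\mbox{-}{\rm Mod}$ is the heart of the matter, and I would try to do it by combining two ingredients: since ${\rm gl.dim}\,A=2$, the modules of injective dimension $\le1$ form a class closed under submodules, products and extensions, and ${\mathcal Y}$ can be arranged to consist of such modules (its finite-dimensional members do, by \ref{pdid1}), so that ${\rm Ext}^2(-,{\mathcal Y})=0$ and ${\rm Ext}^1$ is controlled in low degrees; and finite-dimensional modules are finitely presented, so ${\rm Ext}^1(V,-)$ commutes with the direct limits out of which ${\mathcal X}$ is built.

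I expect the real obstacle to be exactly the step of making these two ingredients precise and mutually compatible — in effect, establishing an infinite-dimensional analogue of the separating property of the tubular families, say that every map from a module in the definable subcategory generated by ${\mathcal P}_s$ to one generated by ${\mathcal Q}_s$ still factors through the definable closure of a tube in ${\mathcal T}_s$, in a form strong enough to split $0\to tM\to M\to M/tM\to0$. This is essentially the content of \cite[\S13]{ReiRin}, who establish it via the category of quasi-coherent sheaves on the associated tubular weighted projective line (where it amounts to the existence of Harder--Narasimhan filtrations for arbitrary objects); the reduction above is only the bookkeeping that isolates this point.
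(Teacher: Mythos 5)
The theorem you are proving is not proved in this paper at all: it is imported verbatim from Reiten and Ringel \cite[13.1]{ReiRin}, so there is no internal argument to compare against, and a blind proof would have to carry the full weight itself. Judged on that basis, your proposal has a genuine gap --- one you in fact name yourself. The reduction is fine as far as it goes: the sets of lower and upper bounds are closed (because indices of finite-dimensional indecomposables are rational), and the torsion-pair bookkeeping correctly shows that if some $r$ ``straddles'' an indecomposable $M$ then $tM\neq 0\neq M/tM$, so a contradiction with indecomposability would follow from splitness of $0\to tM\to M\to M/tM\to 0$. But that splitness, i.e.\ ${\rm Ext}^1(N,M')=0$ for \emph{arbitrary} $N\in{\mathcal Y}$ and $M'\in{\mathcal X}$, is essentially the whole content of the theorem, and the two ingredients you propose do not deliver it. \ref{AusReit} together with \ref{pdid1} and \ref{canonmodcat} gives the vanishing only when both arguments are finite-dimensional; writing $tM$ as a direct limit of finite-dimensional submodules helps only if ${\rm Ext}^1(M/tM,-)$ commutes with direct limits, which needs $M/tM$ to be FP$_2$ (cf.\ \ref{extfpfp}(b)) --- and it is not, in general --- while ${\rm Ext}^1(-,W)$ applied to a direct limit in its first (contravariant) variable produces an inverse limit with $\varprojlim^1$ obstructions, not a vanishing statement. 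Deferring exactly this step to \cite[\S 13]{ReiRin} makes the argument circular as a proof of the quoted result: the infinite-dimensional separation/factorisation property you appeal to (the analogue of \cite[Lemma 11]{ReiRin}, which this paper also uses, e.g.\ in \ref{limit}) is the hard theorem, not a formal consequence of the finite-dimensional picture.

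Two smaller points in the reduction are also only waved at: the behaviour at the extremes (modules receiving maps from ${\mathcal Q}_\infty$ or mapping to ${\mathcal P}_0$), and the tilting symmetry used to place both $q_Y$ and $q_X$ strictly between $0$ and $\infty$; these need an argument, since the definition of slope at $0$ and $\infty$ is not symmetric in the naive way. If your aim is a self-contained proof, the route would have to follow Reiten--Ringel's own analysis (generation of modules with $(M,{\mathcal P}_r)=0$ by the tubular families, or the sheaf-theoretic Harder--Narasimhan argument you mention); if the aim is only to use the result, then, as the paper does, a citation suffices and the reduction adds little.
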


\subsection{Pp formulas}

The required background from the model theory of modules may be found in various references, for instance \cite{PreNBK} (or \cite{PreHoAMod} or \cite{PreBk}).

A {\bf pp formula} $\phi(v)$ with free variable $v$ is a system of homogenous $A$-linear equations with all of the unknowns except $v$ existentially quantified out.  For instance $\exists w \, (rv+sw=0)$, where $r,s\in A$, is a pp formula (for left modules) with free variable $v$ and the general form is $\exists v_2,\dots, v_n \, H(v, v_2,\dots, v_n)^T=0$ where $H$ is a matrix with entries from $A$.  If $\phi =\phi(v)$ is a pp formula and $M$ is any $A$-module then $\phi(M)$ denotes the solution set of $\phi$ in $M$ - a {\bf pp-definable subgroup} of $M$.  We regard as equivalent pp formulas which have the same solution set in every module, equivalently, see \cite[1.2.23]{PreNBK}, in every finitely presented module.  We will write ${\rm pp}_A$ for the set of equivalence classes and we often identify a pp formula with its equivalence class.  More generally if $M$ is a module and $\phi, \psi \in {\rm pp}_A$ then we set $\phi \sim_M \psi$ if $\phi(M)=\psi(M)$ and we write ${\rm pp}(M)$ for the quotient ${\rm pp}_A/\sim_M$; this can be identified with the set, indeed lattice under intersection and sum, of pp-definable subgroups of $M$ and we write $\phi \wedge \psi$ and $\phi + \psi$ for the corresponding operations on (equivalence classes of) pp formulas.  We also write $\psi \leq \phi$ if $\psi(M)\leq \phi(M)$ for every (finitely presented) module $M$ and, if this holds, refer to the {\bf pp-pair} $\phi/\psi$, saying that it is {\bf open} on a module $M$ if $\psi(M)<\phi(M)$, and {\bf closed} on $M$ if $\psi(M)=\phi(M)$.

If $m \in M \in {\rm mod}\mbox{-}A$ then, see \cite[1.2.6]{PreNBK}, there is a pp formula $\phi$ which {\bf generates the pp-type} of $m$ in $M$, meaning first that $m\in \phi(M)$ and $\phi(M)$ is the smallest pp-definable subgroup of $M$ containing $m$ but, further, $\phi$ is minimal in ${\rm pp}_A$ with this property.  If $\phi$ is a pp formula then a {\bf free realisation} of $\phi$ is a pointed module $(M,m)$ (that is, $m\in M$) with $M$ finitely presented, such that $\phi$ generates the pp-type of $m$ in $M$.  We will also use $ m $ to denote the morphism $ A\rightarrow M $ which is defined by taking $ 1 $ to $ m$; $ {\rm coker}(m) $ usually will refer to the module $ M/Am $ rather than the cokernel map.  Note that the natural surjection $ M\rightarrow {\rm coker}(m)$ induces an embedding of the functor $ ({\rm coker}(m), -) $ into $ (M,-) $.

We will use the following results, often without further comment.

\begin{prop}\label{freereal}\marginpar{freereal} (see \cite[1.2.14, 1.2.17]{PreNBK}) Every pp formula has a free realisation.  If $ (M,m) $ is a free realisation of the pp formula $ \phi(v)  $ and if $N$ is any module and $n\in N$ then $ n\in \phi( N)$ iff there is a
morphism $ f:M\rightarrow N $ with $ f(m)=n.$
\end{prop}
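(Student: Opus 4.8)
The plan is to derive both assertions from a single observation: the pp formula obtained by ``reading off'' a finite presentation of a pointed module exactly describes factorisation through that module.

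First I would record the one elementary fact used throughout, that pp formulas are preserved by homomorphisms. If $h\colon M\to N$ is a morphism, $\phi(v)=\exists\bar v\,(v,\bar v)H=0$, and $\bar a$ is a tuple from $M$ with $(m,\bar a)H=0$, then applying $h$ entrywise gives $(h(m),h(\bar a))H=0$, so $h(m)\in\phi(N)$. This already yields the ``$\Leftarrow$'' direction of the biconditional: a free realisation $(M,m)$ of $\phi$ has $m\in\phi(M)$, so any $f\colon M\to N$ with $f(m)=n$ forces $n\in\phi(N)$.

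Next I would prove the existence statement together with the \emph{lifting lemma}. Given $\phi(v)=\exists v_1,\dots,v_n\,(v,v_1,\dots,v_n)H=0$, let $M$ be the module generated by $m,m_1,\dots,m_n$ subject to exactly the relations expressed by this equation system (reading $m$ for $v$ and $m_i$ for $v_i$); then $M$ is finitely presented and, by construction, $m\in\phi(M)$ with the $m_i$ as witnesses. The lifting lemma states: for every module $N$ and every $n\in N$, one has $n\in\phi(N)$ iff there is a morphism $f\colon M\to N$ with $f(m)=n$. Indeed a tuple $(n,n_1,\dots,n_n)$ witnessing $n\in\phi(N)$ is precisely a homomorphism, from the free module on the generators of $M$ to $N$, that kills the defining relations, hence factors through $M$ and sends $m$ to $n$; the converse is the preservation fact above applied to $m\in\phi(M)$. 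It follows that $(M,m)$ is a free realisation of $\phi$: if $\psi\in{\rm pp}_A$ and $m\in\psi(M)$, then for any $N$ and any $n\in\phi(N)$ the lemma gives $f\colon M\to N$ with $f(m)=n$, and preservation gives $n=f(m)\in\psi(N)$; since $N$ and $n$ are arbitrary, $\phi\leq\psi$, so $\phi$ is minimal among pp formulas holding of $m$ in $M$, i.e.\ it generates the pp-type of $m$ there.

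Finally, for an \emph{arbitrary} free realisation $(M',m')$ of a given $\phi$, I would compare $\phi$ with the presentation formula: choosing a finite presentation of $M'$ with $m'$ among the generators yields, by the construction just described, a pp formula $\phi'$ that also generates the pp-type of $m'$ in $M'$. As any two generators of that pp-type are each $\leq$ the other, $\phi$ and $\phi'$ define the same element of ${\rm pp}_A$, hence $\phi(N)=\phi'(N)$ for every $N$, and the lifting lemma for $\phi'$ transfers verbatim to $\phi$. The only point needing a word of care is that a single pp formula is genuinely finite data, so the module it presents really is finitely presented (over the Noetherian algebras of this paper that is automatic in any case). I expect the heart of the matter to be the lifting lemma, with everything else amounting to bookkeeping around the preservation property and the universal property of a presentation.
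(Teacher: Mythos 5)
Your proof is correct: the paper offers no argument of its own for this background fact (it simply cites \cite[1.2.14, 1.2.17]{PreNBK}), and your construction of the free realisation by reading off the formula's matrix as a presentation, together with the lifting lemma identifying solutions of $\phi$ in $N$ with morphisms from that presented module, is exactly the standard argument in the cited source. The transfer to an arbitrary free realisation via the presentation formula $\phi'$ and equivalence of generators of the pp-type is also the standard step and is handled correctly.
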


So, if $(M,m)$, $(N,n)$ are respectively free realisations of $\phi$, resp.~$\psi$, then $\phi \geq \psi$ iff there is a morphism from $M$ to $N$ taking $m$ to $n$.

\begin{lemma}\label{realpp}\marginpar{realpp} (see \cite[1.2.19]{PreNBK}) If $ (M,m) $ is a free realisation of the pp formula $ \phi (v) $ then for any module $ N$, $ \phi (N)\simeq (M,N)/({\rm coker}(m),N) $ as vector spaces, the isomorphism being induced by $f\in (M,N) \mapsto f(m)$.
\end{lemma}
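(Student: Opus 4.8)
The plan is to exhibit $\phi(N)$ and $({\rm coker}(m),N)$ as, respectively, the image and the kernel of the evaluation map $\epsilon_m\colon (M,N)\to N$, $f\mapsto f(m)$, and then to apply the first isomorphism theorem. Note first that $\epsilon_m$ is additive (indeed $k$-linear, $k$ being central in $A$), so its image is a subgroup of $N$ and the quotient $(M,N)/\ker(\epsilon_m)$ makes sense.

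I would then observe that ${\rm im}(\epsilon_m)=\phi(N)$: this is exactly the second assertion of \ref{freereal}, which says that for $n\in N$ one has $n\in\phi(N)$ iff $n=f(m)$ for some $f\in(M,N)$. Next I would compute $\ker(\epsilon_m)$. For $f\in(M,N)$ we have $f(m)=0$ iff $f$ annihilates the submodule $mA\leq M$ (by linearity $f(ma)=f(m)a$), iff $f$ factors through the canonical surjection $\pi\colon M\to M/mA={\rm coker}(m)$; and since $\pi$ is epi this factorisation is unique. Hence $f\mapsto\overline f$ (the induced map $M/mA\to N$) is a bijection from $\ker(\epsilon_m)$ onto $({\rm coker}(m),N)$, whose inverse $g\mapsto g\pi$ is precisely the embedding $({\rm coker}(m),-)\hookrightarrow(M,-)$ noted just before \ref{freereal}. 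So, under that identification, $({\rm coker}(m),N)=\ker(\epsilon_m)$ as a subgroup of $(M,N)$.

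Putting these together, the first isomorphism theorem gives $(M,N)/({\rm coker}(m),N)=(M,N)/\ker(\epsilon_m)\simeq{\rm im}(\epsilon_m)=\phi(N)$, and by construction the isomorphism is the one induced by $f\mapsto f(m)$, as required. Every step here is routine; the only point needing a moment's care is matching the ``obvious'' embedding of the representable functors $({\rm coker}(m),-)$ and $(M,-)$ with the factorisation through $\pi$ — that is, checking that the abstract kernel of $\epsilon_m$ really is the subgroup the statement refers to — but this is immediate from the universal property of $\pi$ as the quotient map $M\to M/mA$.
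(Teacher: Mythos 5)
Your argument is correct and is essentially the standard proof (the paper itself gives no proof, only the citation to \cite[1.2.19]{PreNBK}, where exactly this argument appears): evaluation at $m$ is onto $\phi(N)$ by \ref{freereal}, its kernel is the image of the embedding $({\rm coker}(m),N)\hookrightarrow (M,N)$ given by composing with $M\to M/mA$, and the first isomorphism theorem finishes it. No gaps worth noting.
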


\begin{lemma}\label{ppmeetsum}\marginpar{ppmeetsum} (see \cite[1.2.27, 1.2.28]{PreNBK}) If $ (M,m)$ is a free realisation of $\phi $ and $(M',m')$ is a free realisation of $\psi$ then $(P,gm=g'm')$ is a free realisation of $\phi \wedge \psi$, where $P$ is the pushout as shown.

$\xymatrix{A \ar[r]^{m} \ar[d]_{m'} & M
\ar[d]^g \\ M' \ar[r]_{g'} & P}.$

The pointed module $(M'\oplus M, (m',m))$ is a free realisation of $\psi+\phi$.
\end{lemma}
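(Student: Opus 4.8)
The plan is to deduce both claims from the following observation, which I will call the lifting criterion: a pointed finitely presented module $(X,x)$ is a free realisation of a pp formula $\theta$ whenever $x\in\theta(X)$ and, for every module $N$ and every $n\in\theta(N)$, there is a morphism $h\colon X\to N$ with $h(x)=n$. This is immediate from the definitions once one recalls that a homomorphism carries each pp-definable subgroup into the corresponding pp-definable subgroup of the target: given the hypotheses, any pp formula $\chi$ with $x\in\chi(X)$ satisfies $\chi(N)\ni h(x)=n$ for each $n\in\theta(N)$, so $\theta\leq\chi$, which says precisely that $\theta$ generates the pp-type of $x$ in $X$. (Conversely the criterion holds for every free realisation, by \ref{freereal}.) I will also use that $(\phi\wedge\psi)(N)=\phi(N)\cap\psi(N)$ and $(\psi+\phi)(N)=\psi(N)+\phi(N)$ for all $N$, which is how the meet and sum of pp formulas are defined.

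For the pushout: $P$ is the cokernel of the morphism $A\to M\oplus M'$ with components $m$ and $-m'$, so, $A$, $M$ and $M'$ all being finitely presented, $P$ is finitely presented. Since $m\in\phi(M)$ and $g\colon M\to P$ is a homomorphism, $p=gm\in\phi(P)$, and symmetrically $p=g'm'\in\psi(P)$; hence $p\in\phi(P)\cap\psi(P)=(\phi\wedge\psi)(P)$. For the lifting property, let $N$ be a module and $n\in(\phi\wedge\psi)(N)=\phi(N)\cap\psi(N)$; by \ref{freereal} there are morphisms $f\colon M\to N$ and $f'\colon M'\to N$ with $f(m)=n=f'(m')$, so $fm=f'm'$ as morphisms $A\to N$, and the universal property of the pushout produces $\bar f\colon P\to N$ with $\bar f g=f$, whence $\bar f(p)=\bar f(gm)=f(m)=n$. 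By the lifting criterion, $(P,p)$ is a free realisation of $\phi\wedge\psi$.

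For the direct sum: $M'\oplus M$ is finitely presented as a finite direct sum of finitely presented modules. With $\iota'$, $\iota$ the coproduct inclusions, $(m',0)=\iota'(m')\in\psi(M'\oplus M)$ and $(0,m)=\iota(m)\in\phi(M'\oplus M)$, so $(m',m)=(m',0)+(0,m)\in\psi(M'\oplus M)+\phi(M'\oplus M)=(\psi+\phi)(M'\oplus M)$. For the lifting property, given $n\in(\psi+\phi)(N)=\psi(N)+\phi(N)$ write $n=n_1+n_2$ with $n_1\in\psi(N)$, $n_2\in\phi(N)$, pick via \ref{freereal} morphisms $f'\colon M'\to N$ and $f\colon M\to N$ with $f'(m')=n_1$, $f(m)=n_2$, and let $(f',f)\colon M'\oplus M\to N$ be the induced map; it sends $(m',m)$ to $f'(m')+f(m)=n$. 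Again by the lifting criterion, $(M'\oplus M,(m',m))$ is a free realisation of $\psi+\phi$.

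The argument is formal, so I do not expect a real obstacle; the only points that need a little care are that a pushout of the displayed shape and a finite direct sum of finitely presented modules are again finitely presented, and that the universal properties of these two constructions are matched correctly with the morphism description of pp-definable subgroups in \ref{freereal}. In particular it is the passage to \emph{arbitrary} $N$ in the lifting criterion that accounts for the ``minimality in ${\rm pp}_A$'' clause in the definition of generating a pp-type.
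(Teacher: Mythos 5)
Your argument is correct. Note that the paper itself gives no proof of \ref{ppmeetsum} — it is quoted as background with a citation to \cite[1.2.27, 1.2.28]{PreNBK} — and your proof is essentially the standard one found there: check that the distinguished element satisfies the formula (using that homomorphisms preserve pp-definable subgroups), then use the universal property of the pushout, respectively the direct sum, together with the morphism characterisation in \ref{freereal} to get the lifting property, which is exactly the condition for generating the pp-type. The "lifting criterion" you isolate at the start is precisely the converse direction of \ref{freereal} and is stated correctly, so there is no gap.
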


A {\bf definable subcategory} ${\mathcal D}$ of $A\mbox{-}{\rm Mod}$ is one closed under direct products, direct limits and pure submodules.  Equivalently, see e.g.~\cite[3.4.7]{PreNBK}, there is a set of pp-pairs such that ${\mathcal D}$ is the collection of all modules on which each of these pp-pairs is closed.  Recall that, to give one of many equivalent definitions, see \cite[\S 2.1]{PreNBK}, a {\bf pure} submodule $A$ of $B$ is one which satisfies $\phi(A)=A\cap \phi(B)$ for every pp formula $\phi$.  Intersections of kernels of covariant Hom and Ext functors are definable on account of the following (\cite[pp.~211-12]{PreInterp} or see \cite[10.2.35, 10.2.36]{PreNBK}).

\begin{theorem}\label{extfpfp}\marginpar{extfpfp}  Let $M\in A\mbox{-}{\rm Mod}$.

\noindent (a) If $M$ is finitely presented then there is a pp-pair $\phi/\psi$ such that the functors $(M,-)$ and $\phi(-)/\psi(-)$ are isomorphic (as objects of the functor category $(A\mbox{-}{\rm Mod}, {\bf Ab})$).

\noindent (b) If $M$ is FP$_2$ - has a projective presentation with the first three terms finitely generated - then the functor ${\rm Ext}^1(M,-)$ on $A\mbox{-}{\rm Mod}$ is isomorphic to one of the form $\phi(-)/\psi(-)$ for some pp-pair $\phi/\psi$.
\end{theorem}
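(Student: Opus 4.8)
The plan is to prove both parts by resolving $M$, applying the functor $(-,N)$, and reading off the resulting solution sets and cokernels as pp-definable subgroups, checking naturality in $N$ throughout. For (a) I would fix a finite presentation $A^{k}\xrightarrow{H}A^{l}\xrightarrow{\pi}M\to 0$, with $\pi$ sending the standard basis to a generating tuple $\bar m=(m_{1},\dots,m_{l})$ of $M$ and $H$ a matrix over $A$. Applying $(-,N)$ gives, naturally in $N$, a left exact sequence $0\to (M,N)\to N^{l}\to N^{k}$ in which the second map is given by $H$; hence $(M,N)$ is identified, naturally in $N$, with the solution set in $N^{l}$ of the homogeneous $A$-linear system $\bar vH=0$, that is, with $\phi(N)$ for the quantifier-free pp formula $\phi(\bar v)\equiv(\bar vH=0)$. (Equivalently: this $\phi$ generates the pp-type of $\bar m$ in $M$, so $(M,\bar m)$ is a free realisation of $\phi$ with ${\rm coker}(\bar m)=0$, and the many-variable form of \ref{realpp} gives the same identification.) Taking $\psi(\bar v)\equiv\bigwedge_{i}v_{i}=0$, which lies below $\phi$ and has $\psi(N)=0$ for every $N$, yields $(M,-)\cong\phi(-)/\psi(-)$. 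If one insists on a pp-pair in a single free variable, one applies the standard reduction of an $n$-variable pp-pair to a $1$-variable one with naturally isomorphic quotient functor.

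For (b) I would choose a projective presentation $A^{n_{2}}\xrightarrow{d_{2}}A^{n_{1}}\xrightarrow{d_{1}}A^{n_{0}}\xrightarrow{d_{0}}M\to 0$ witnessing FP$_{2}$, and let $\Omega M=\ker d_{0}=\operatorname{im} d_{1}$ be the first syzygy. Since $\operatorname{im} d_{2}=\ker d_{1}$ is finitely generated, $A^{n_{2}}\to A^{n_{1}}\to\Omega M\to 0$ is a finite presentation of $\Omega M$; this is the only use of the FP$_{2}$ hypothesis. Applying $(-,N)$ to $0\to\Omega M\xrightarrow{\iota}A^{n_{0}}\to M\to 0$ and using projectivity of $A^{n_{0}}$ gives, naturally in $N$,
\[
0\to (M,N)\to N^{n_{0}}\xrightarrow{\ \iota^{*}\ }(\Omega M,N)\to {\rm Ext}^{1}(M,N)\to 0,
\]
so ${\rm Ext}^{1}(M,-)\cong{\rm coker}(\iota^{*})$ naturally. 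By part (a) applied to the finitely presented module $\Omega M$, $(\Omega M,-)\cong\theta(-)$ for a pp formula $\theta(\bar v)$ in $n_{1}$ variables, the isomorphism sending $g\mapsto(g(x_{1}),\dots,g(x_{n_{1}}))$ where $\bar x$ is the generating tuple of $\Omega M$ coming from the basis of $A^{n_{1}}$. Writing each $x_{j}\in A^{n_{0}}$ explicitly as a row $\bar a_{j}$ over $A$, the map $\iota^{*}$ sends $\bar n\in N^{n_{0}}$ to $(\bar a_{1}\bar n,\dots,\bar a_{n_{1}}\bar n)\in\theta(N)$, so $\operatorname{im}\iota^{*}=\psi(N)$ for the pp formula $\psi(\bar v)\equiv\exists\bar w\,\bigwedge_{j}(v_{j}=\bar a_{j}\bar w)$; moreover $\psi\leq\theta$ because each such tuple arises from a genuine morphism $\Omega M\to N$. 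Hence ${\rm Ext}^{1}(M,-)\cong\theta(-)/\psi(-)$.

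The mathematical content is essentially just these two $(-,N)$-computations together with part (a), so I expect no serious obstacle. The care required is (i) verifying that every identification above is natural in $N$ — in particular that ${\rm Ext}^{1}(M,-)\cong{\rm coker}(\iota^{*})$ and the description of $\operatorname{im}\iota^{*}$ as a pp-definable subgroup are isomorphisms of functors, not merely pointwise isomorphisms — and (ii) the routine bookkeeping of the $n$-variable to $1$-variable reduction, should that form of the statement be wanted. The hypotheses enter in exactly one place each: finite presentability in (a) is what guarantees that the kernel of $N^{l}\to N^{k}$ (equivalently, the pp-type of $\bar m$) comes from a finite system, and FP$_{2}$ in (b) is precisely what makes $\Omega M$ finitely presented, so that part (a) can be applied to it.
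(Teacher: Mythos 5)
The paper does not prove this statement itself—it is quoted from \cite{PreInterp} and \cite[10.2.35, 10.2.36]{PreNBK}—and your argument is the standard proof given in those sources; it is correct. In (a), applying $(-,N)$ to a finite presentation identifies $(M,N)$, naturally in $N$, with the solution set in $N^l$ of the quantifier-free system given by the presentation matrix (paired with $\bar v=0$), and in (b) the FP$_2$ hypothesis is used exactly where you say, to make the syzygy $\Omega M$ finitely presented, so that ${\rm Ext}^1(M,-)\simeq {\rm coker}\big(N^{n_0}\rightarrow (\Omega M,N)\simeq \theta(N)\big)$ with image the pp-definable subgroup $\psi(N)\leq\theta(N)$. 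The one claim you should drop is the parenthetical ``standard reduction'' of an $n$-variable pp-pair to a $1$-variable one with naturally isomorphic quotient functor: this is false in general. Already for $A=k$ and $M=k^2$ the functor $(M,-)$, i.e.\ $N\mapsto N^2$, is not isomorphic to any $\phi(-)/\psi(-)$ with $\phi,\psi$ in a single free variable, since over a field a one-variable pp formula defines $0$ or everything, so such quotients are isomorphic to $0$ or to the forgetful functor. Fortunately nothing requires that reduction: the statement, as in the cited sources and as used in the paper, concerns pp-pairs in a finite tuple of free variables (the paper itself later works with ${\rm pp}_A^k$), and closure of such a pair on a module is still expressed by the single sentence $\forall \bar x\,(\phi(\bar x)\rightarrow\psi(\bar x))$, which is all that the definability and compactness arguments in Section \ref{secirrat} use.
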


\subsection{Pure-injective modules}

Over a finite-dimensional algebra $A$ a module is pure-injective (that is, injective over pure embeddings) iff it is a direct summand of a direct product of finite-dimensional modules.  Every pure-injective module $N$ decomposes as a direct sum $N_{\rm d}\oplus N_{\rm c}$ where $N_{\rm d}$ is the pure-injective hull of a direct sum of indecomposable pure-injectives and where $N_{\rm c}$ is {\bf superdecomposable}, meaning that it has no indecomposable direct summands.  Over some rings, for instance the ring $k[T]$ of polynomials over a field in one indeterminate, there are no superdecomposable pure-injective modules and for some time it seemed that existence of a superdecomposable pure-injective might be an indication of wildness of the category of finite-dimensional modules.

Indeed Ziegler introduced a notion of width of a modular lattice and had shown, \cite[7.1(1)]{Zie}, that if there is a superdecomposable pure-injective module then the lattice of pp formulas, equivalently the lattice of pointed finitely presented modules, has width undefined (that is, ``$\infty$").  Ziegler also proved, \cite[7.1(2)]{Zie}, the converse when the base ring is countable.  An essentially equivalent notion, the ``breadth" of a modular lattice, which is defined in terms of successively (transfinitely) collapsing intervals which are chains, was used in \cite{PreBk}.  Thus, at least if the ring is countable, existence of a superdecomposable pure-injective is equivalent to the category of finitely presented modules having a certain degree of complexity.

Puninski showed, however, that the modules over any non-domestic string (hence tame) algebra do have this degree of complexity - the width is undefined and so, if the ring is countable (and more generally, see \cite{PunRealSup}), there is a superdecomposable pure-injective.  The results here prove that for another class of tame algebras the width is undefined.  Recently Kasjan and Pastuszak \cite{KasPas} proved the same for strongly simply connected algebras of non-polynomial growth.  In the light of all this and of results such as \cite{PreSchr}, \cite{PrePun}, \cite{Pun}, \cite{PunPreDom} for domestic string algebras, a more reasonable conjecture now is that this dimension, like, conjecturally, Krull-Gabriel dimension, detects the difference between domestic and non-domestic representation type.

The results of Ziegler referred to above also hold in a relative version which applies to any definable category.  The lattice of pp formulas for ${\mathcal D}_r$ is defined at the start of Section \ref{secwide}; the general definition should be obvious from that.

\begin{theorem}\label{spdecwdth}\marginpar{spdecwdth} \cite[7.1]{Zie} If ${\mathcal D}$ is a definable subcategory of $A\mbox{-}{\rm Mod}$ and if there is a superdecomposable pure-injective in ${\mathcal D}$ then the width of the lattice of pp formulas for ${\mathcal D}$ is undefined.  The converse holds if $A$ (or just the lattice of pp formulas for ${\mathcal D}$) is countable.
\end{theorem}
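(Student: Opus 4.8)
Since this is \cite[7.1]{Zie} restated for an arbitrary definable category, the plan is to recall Ziegler's proof and verify that every step is internal to $\mathcal{D}$, so that it transfers once one systematically reads ``module'' as ``module in $\mathcal{D}$'', ``pp formula'' as ``pp formula for $\mathcal{D}$'' and ``pure-injective'' as ``pure-injective $A$-module lying in $\mathcal{D}$'' (legitimate because a definable subcategory is closed under pure-injective hulls, so the pure-injective objects of $\mathcal{D}$ really are just the pure-injective $A$-modules in $\mathcal{D}$). Throughout, $L$ denotes the modular lattice of pp formulas for $\mathcal{D}$, i.e.\ $\mathrm{pp}_A$ modulo the congruence $\phi\sim\psi$ iff $\phi(M)=\psi(M)$ for all $M\in\mathcal{D}$, and ``width undefined'' is taken in the transfinite-collapsing sense: $L$ has defined width precisely when iterating the operation ``quotient out every interval of $L$ that is a chain'' eventually reaches the one-point lattice, and ``undefined'' otherwise.

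For the first implication, suppose $N\in\mathcal{D}$ is a superdecomposable pure-injective. The crux is the relative form of the standard correspondence between indecomposable direct summands of a pure-injective and irreducible pp-types: since $N$ has no indecomposable direct summand, no nonzero element of $N$ has irreducible pp-type relative to $\mathcal{D}$. Unwinding the failure of irreducibility gives a splitting lemma: whenever a pp-pair $\phi/\psi$ is open on $N$ (that is, $\psi(N)<\phi(N)$) one can produce, inside the interval $[\psi,\phi]$ of $L$, pp formulas exhibiting it as a nontrivial sum of two strictly smaller subintervals $[\psi_0,\phi_0]$ and $[\psi_1,\phi_1]$ (with the appropriate meet and join relations forced by modularity), each of which is again open on $N$. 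Starting from any open pp-pair on $N$ and iterating this along the binary tree $2^{<\omega}$ yields a tree of nested, repeatedly-split open intervals of $L$; such a configuration is exactly what prevents the chain-collapsing process from ever terminating, so the width of $L$ is undefined. I would quote \cite{Zie} both for the lattice-theoretic ``tree criterion'' for undefined width and for the pp-type/pure-injective-hull machinery, checking only that the relative versions are available.

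For the converse, assume the width of $L$ is undefined and that $L$ (for which countability of $A$ suffices) is countable. By the tree criterion there is inside $L$ a $2^{<\omega}$-indexed tree of nested intervals, each split nontrivially at the next level as above. The task is to realise this whole tree by a single pp-type: by a back-and-forth construction over $\omega$ that interleaves the two families of requirements, one builds a (one-variable) pp-type $p$ for $\mathcal{D}$ such that in the pure-injective hull $N$ of a realisation of $p$ every element again fails to have irreducible pp-type -- each candidate indecomposable summand being blocked by a further splitting drawn from a deeper level of the tree. Countability is what allows this $\omega$-step enumeration. Finally one checks $N\in\mathcal{D}$: $p$ is realised in some module of $\mathcal{D}$ (its pp-pairs are taken ``for $\mathcal{D}$''), and $\mathcal{D}$, being definable, is closed under pure submodules, products, direct limits and pure-injective hulls, so $N$ stays in $\mathcal{D}$; it is then a superdecomposable pure-injective in $\mathcal{D}$.

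The main obstacle is this converse construction: organising the back-and-forth so that (i) every potential indecomposable direct summand of the pure-injective hull of $p$ is genuinely killed -- the delicate bookkeeping, using the full strength of the tree of splittings together with the countable enumeration of $L$ -- while (ii) never leaving $\mathcal{D}$. In the forward direction the only real subtlety is matching the ``no irreducible pp-type'' consequence of superdecomposability to the precise lattice-theoretic shape (the $2^{<\omega}$-indexed tree of split intervals) demanded by Ziegler's definition of width; both the splitting lemma and the tree criterion are available in \cite{Zie} and need only be read relative to $\mathcal{D}$.
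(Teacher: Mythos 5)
The paper gives no proof of this statement---it is quoted as the relative form of Ziegler's Theorem 7.1---and your plan of relativizing Ziegler's argument to the definable subcategory (a superdecomposable pure-injective has no nonzero element with irreducible pp-type, yielding a binary tree of split intervals and hence undefined width; conversely, for a countable lattice, an $\omega$-step construction of a pp-type whose pure-injective hull is superdecomposable, which stays in ${\mathcal D}$ by closure under pure-injective hulls and the other closure properties) is exactly how the relative version is standardly obtained, cf.\ \cite{Zie} and \cite[\S 7.3]{PreNBK}. So your approach is essentially the paper's, with the substantive lemmas deferred to Ziegler as the paper does; the one caveat is that your chain-collapsing formulation of ``width'' is really the essentially equivalent breadth of \cite{PreBk}, a distinction the paper itself notes.
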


The converse in the uncountable case is open.

At a few places we mention the Ziegler spectrum of $A$.  This is a topological space whose points are the isomorphism classes of indecomposable pure-injective $A$-modules and whose topology is such that the closed sets are in natural bijective correspondence with the definable subcategories of $A\mbox{-}{\rm Mod}$.  In one direction the map takes a definable subcategory to the set of indecomposable pure-injectives in it.  In particular a definable category is generated as such by the module which is the direct sum of each indecomposable pure-injective in it.  For this and other results around the Ziegler spectrum see the original paper \cite{Zie} or, for some perhaps more convenient statements, \cite{PreNBK}, specifically \cite[5.1.4]{PreNBK}.

The Ziegler spectra of tame hereditary algebras are known through the work of a series of authors, culminating in \cite{PreZg} and \cite{RinZie}.  More generally, the Ziegler spectrum of the definable category generated by the modules in a generalised tube was described in \cite{KraGenc}, see also \cite[\S 3.5]{Har} and \cite{RinTubes}, so we understand quite well the modules of rational slope over a tubular algebra.

\section{Modules at irrational cuts}\label{secirrat}\marginpar{secirrat}

We assume throughout this section that $A$ is a tubular algebra, so its category of finite-dimensional modules has the shape described in \ref{canonmodcat}.

We will say that a {\em finite-dimensional} indecomposable module $ X $ is {\bf in} an interval $ I\subseteq {\mathbb Q}^\infty _0$ if its slope lies in this interval; we also extend the terminology to arbitrary finite-dimensional modules to mean that every indecomposable summand is in this interval.

Let $ r $ be a positive irrational. The modules of slope $r$ form, by \ref{extfpfp}, a definable  subcategory $ {\mathcal D}_r $ of $ {\rm Mod}\mbox{-}A $, namely that defined by the following conditions: $ (X,-)=0 $ for every $ X\in {\mathcal Q}_r$; $ {\rm Ext}(Y,-)=0 $ for every $ Y\in {\mathcal P}_r$.  It is immediate from the Compactness Theorem of model theory that this class contains nonzero modules.  Since we will use it elsewhere in this paper, we state this result formally, although we must refer to the background references for some of the terms used in its statement.

\begin{theorem}\label{cpct}\marginpar{cpct} (Compactness Theorem) Let $\Phi$ be a set of sentences (i.e. formulas with no free variables) of a formal first-order language.  Suppose that for every finite subset $\Phi'$ of $\Phi$ there is a structure which satisfies all the sentences in $\Phi'$.  Then there is a structure which satisfies all the sentences in $\Phi$.
\end{theorem}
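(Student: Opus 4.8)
The plan is to prove this by the ultraproduct construction, which is the route that fits most comfortably with the algebraic applications made later in the paper. Let $I$ be the set of all finite subsets of $\Phi$. By hypothesis we may choose, for each $i \in I$, a structure $\mathcal{M}_i$ satisfying every sentence in $i$. For $\sigma \in \Phi$ set $\widehat{\sigma} = \{ i \in I : \sigma \in i \}$. The family $\{\widehat{\sigma} : \sigma \in \Phi\}$ has the finite intersection property, since $\{\sigma_1,\dots,\sigma_n\}$ belongs to $\widehat{\sigma_1} \cap \dots \cap \widehat{\sigma_n}$, so it extends to an ultrafilter $\mathcal{U}$ on $I$ (the existence of such an ultrafilter being the usual Zorn's-Lemma argument).

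Form the ultraproduct $\mathcal{M} = \big(\prod_{i \in I} \mathcal{M}_i\big)/\mathcal{U}$. The central tool is {\L}o{\'s}'s Theorem: for every sentence $\theta$ of the language, $\mathcal{M} \models \theta$ if and only if $\{ i \in I : \mathcal{M}_i \models \theta \} \in \mathcal{U}$. Granting this, fix $\sigma \in \Phi$; whenever $\sigma \in i$ we have $\mathcal{M}_i \models \sigma$ by the choice of $\mathcal{M}_i$, so $\{ i : \mathcal{M}_i \models \sigma \} \supseteq \widehat{\sigma} \in \mathcal{U}$, hence lies in $\mathcal{U}$ since a filter is closed under supersets. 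By {\L}o{\'s}, $\mathcal{M} \models \sigma$, and as $\sigma$ was an arbitrary member of $\Phi$ this shows $\mathcal{M}$ satisfies all of $\Phi$.

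The one step that requires genuine work is {\L}o{\'s}'s Theorem itself. I would prove it by induction on the complexity of formulas (with free variables, interpreted by tuples of elements of the ultraproduct): the atomic case is immediate from how operations and relations are defined on the quotient structure; the propositional steps use that $\mathcal{U}$ is an ultrafilter, in particular closed under finite intersection and containing exactly one of each set and its complement; and the existential-quantifier step requires selecting, on the set of indices where a witness exists, a coordinate-wise witnessing element, which is where the axiom of choice enters. Once {\L}o{\'s}'s Theorem is available, the argument above is routine, so this is the natural place to locate the main obstacle.

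A syntactic alternative is to deduce the theorem from G{\"o}del's Completeness Theorem: any formal derivation of a contradiction from $\Phi$ is finite and therefore uses only finitely many sentences of $\Phi$, so finite satisfiability of $\Phi$ forces $\Phi$ to be consistent, whence $\Phi$ has a model. This merely relocates the difficulty into the proof of completeness, typically carried out by a Henkin construction (expand the language by witnessing constants, extend $\Phi$ to a maximal consistent Henkin theory, and pass to the term model). Either way the statement is entirely classical and we only indicate the argument; the details are in the background references cited above.
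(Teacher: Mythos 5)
Your argument is correct: the ultraproduct construction, the finite-intersection-property argument giving an ultrafilter containing each $\widehat{\sigma}$, and the appeal to {\L}o\'{s}'s Theorem together constitute one of the two standard proofs of compactness, and your sketch of the induction behind {\L}o\'{s}'s Theorem (and of the alternative route via G\"{o}del's Completeness Theorem and a Henkin construction) is accurate. There is, however, nothing in the paper to compare it with: the paper states the Compactness Theorem purely as classical background, with no proof supplied, and immediately applies it to the Hom- and Ext-conditions cutting out the definable subcategory ${\mathcal D}_r$; so your write-up simply fills in a proof the authors deliberately left to the model-theoretic references, and either of the two classical routes you describe would serve equally well for that purpose.
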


In our case, the formal language is one suitable for $A$-modules and we have already said in \ref{extfpfp} that each condition of the form $(X,-)=0$ or ${\rm Ext}(X,-)=0$ with $X$ finite-dimensional is equivalent to closure of a pp-pair  $\phi/\psi$ - hence to the sentence $\forall x\, (\phi(x)\rightarrow \psi(x))$ being satisfied.  Let $\Phi$ be the set of all these ${\rm Ext}$ and ${\rm Hom}$ conditions which cut out ${\mathcal D}_r$, together with a sentence expressing the fact that a module is non-zero ($\exists x \, x\neq 0$ will do).  Given any finitely many of these sentences in $\Phi$, there is a module (in $A\mbox{-}{\rm ind}$) which satisfies them so, by \ref{cpct}, there is a module which satisfies all the sentences in $\Phi$ - that is, there is a nonzero module in ${\mathcal D}_r$.  (We remark that, although an alternative is to use compactness of the Ziegler spectrum, that is itself a consequence of \ref{cpct}.)  Being a nonzero definable category, ${\mathcal D}_r$ contains at least one indecomposable pure-injective module (\cite[4.7, 4.10]{Zie}, see \cite[5.1.5]{PreNBK}).

By the result, \ref{indecslope}, of Reiten and Ringel every indecomposable module in $ {\mathcal D}_r $ has slope, necessarily $ r$. Let $ M(r) $ denote any module which generates $ {\mathcal D}_r $ as a definable category, for instance take $ M(r) $ to be the direct sum of all indecomposable pure-injectives of slope $ r$ (see \cite[5.1.6]{PreNBK}).

Given pp formulas $ \phi  $ and $ \phi ' $, set $ \phi \sim _r\phi ' $ if $ \phi (M(r))= \phi '(M(r))$, in which case we say that $ \phi  $ and $ \phi ' $ are {\bf equivalent at} $ r$ (since this implies that they are equivalent on every module in ${\mathcal D}_r$). The map $ \phi \mapsto \phi (M(r))$ induces an isomorphism between the lattice of pp-definable subgroups of $ M(r) $ and the quotient of ${\rm pp}_A$ by this equivalence relation: $ {\rm pp}(M(r)) \simeq {\rm pp}_A/\sim _r$.

\begin{theorem}\label{ppashom}\marginpar{ppashom} Let $ r $ be a positive irrational and let $ \phi (v) $ be pp. Then there is a pp formula $ \phi '  $, a free realisation $ (M',m') $ of $ \phi ' $ and $\epsilon>0$ such that:

$\bullet$ $M'\in  {\rm add}({\mathcal P}_{r-\epsilon })$;

$\bullet$ ${\rm coker}(m') \in  {\rm add}({\mathcal Q}_{r+\epsilon })$;

$\bullet$ $\phi (X)=\phi '(X) $ for all $ X \in A\mbox{-}{\rm mod}$ in $ (r-\epsilon , r+\epsilon )$;

$\bullet$ ${\rm dim}\,\phi (X)= {\rm dim}\,\phi '(X) ={\rm dim}\,(M',X) $ for all $ X$ in $ (r-\epsilon ,r+\epsilon )$

(indeed, evaluation at $m'$ induces a bijection between $(M',X)$ and $\phi(X)$ for $ X$ in $ (r-\epsilon ,r+\epsilon )$).
\end{theorem}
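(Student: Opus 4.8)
The plan is to take an arbitrary free realisation $(N,m)$ of $\phi$ and decompose $N$ according to the slope stratification of \ref{canonmodcat}, then peel off the part of $m$ that lives in the low-slope piece. Write $N = N_- \oplus N_0 \oplus N_+$ where (after choosing $\epsilon$ small) $N_-\in{\rm add}({\mathcal P}_{r-\epsilon})$, $N_+\in{\rm add}({\mathcal Q}_{r+\epsilon})$, and $N_0$ collects the (finitely many) indecomposable summands of $N$ whose slope lies in $(r-\epsilon,r+\epsilon)$; since $r$ is irrational there are no summands of slope exactly $r$, and since $N$ has only finitely many indecomposable summands we may choose $\epsilon>0$ so small that $N_0$ is empty — so in fact $N = N_-\oplus N_+$. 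Let $m = (m_-,m_+)$ accordingly, let $M' = N_-$ with distinguished element $m' = m_-$, and let $\phi'$ be a pp formula generating the pp-type of $m'$ in $M'$ (so $(M',m')$ is a free realisation of $\phi'$ by construction, and $\phi'\leq\phi$ because there is a map $N\to M'$, the projection, carrying $m$ to $m'$, wait — we need a map the other way; since $(N,m)$ is a free realisation of $\phi$ and $m'\in\phi(M')$ would need checking). The cleaner route: define $\phi'$ directly as the pp-type of $m_-$ in $N_-$; then $\phi'\leq\phi$ holds provided $m_-\in\phi(N)$ under the inclusion $N_-\hookrightarrow N$, which is clear since $m_- = m - (0,m_+)$ and pp-definable subgroups are subgroups — but that only gives $m_-\in\phi(N)$ if $(0,m_+)\in\phi(N)$ too, which need not hold. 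So instead I will argue that the splitting $N=N_-\oplus N_+$ is \emph{compatible} with $\phi$ in the sense that $\phi(N)=\phi(N_-)\oplus\phi(N_+)$ (true for any pp formula on a direct sum, \cite[1.2.3]{PreNBK}), hence $m=(m_-,m_+)\in\phi(N)$ forces $m_-\in\phi(N_-)$, giving $\phi'\leq\phi|_{N_-}\leq\phi$ as required.

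Next I would verify the first two bullets: $M'=N_-\in{\rm add}({\mathcal P}_{r-\epsilon})$ is immediate from the construction. For ${\rm coker}(m')\in{\rm add}({\mathcal Q}_{r+\epsilon})$ I need to understand $M'/m'A$. Here $m'\colon A\to M'$ is the map $1\mapsto m'$; its cokernel is a finitely presented module, and I would control its slopes using \ref{canonmodcat}: any indecomposable summand of $M'/m'A$ receives a nonzero map from $M'\in{\rm add}({\mathcal P}_{r-\epsilon})$, and by the separating property $({\mathcal P}_{r-\epsilon},{\mathcal Q}_{r-\epsilon})$-maps factor through the tubes ${\mathcal T}_{r-\epsilon}$ — but that is the wrong conclusion; what I actually want is that ${\rm coker}(m')$ has no summands of slope $\leq r-\epsilon$ other than controllable ones, and then to \emph{further shrink} $M'$ and re-choose $\phi'$ to push all summands of the cokernel above $r+\epsilon$. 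This is the delicate point: the naive splitting may leave cokernel summands of small slope. The fix, which I expect is how the authors proceed, is an iterative/minimality argument — among all free realisations $(M',m')$ of pp formulas $\phi'\leq\phi$ with $M'\in{\rm add}({\mathcal P}_{r-\epsilon})$ and $\phi'\sim_r\phi$ (equivalent at $r$, equivalently $\phi'(X)=\phi(X)$ for slopes near $r$), choose one with $\dim M'$ minimal; then show that if ${\rm coker}(m')$ had a summand of slope $\leq r-\epsilon$ one could factor and reduce $\dim M'$, contradicting minimality. Establishing that $\phi'\sim_r\phi$ can be preserved throughout — i.e. that stripping off high-slope or redundant data does not change the solution set at slopes near $r$ — is where \ref{canonmodcat}'s vanishing conditions $({\mathcal Q}_r,{\mathcal P}_r)=0$ and the factorisation-through-tubes property do the real work.

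For the last two bullets, once $M'\in{\rm add}({\mathcal P}_{r-\epsilon})$ and ${\rm coker}(m')\in{\rm add}({\mathcal Q}_{r+\epsilon})$ are in hand, I would invoke \ref{realpp}: for any $X$, $\phi'(X)\cong (M',X)/({\rm coker}(m'),X)$. If $X$ has slope in $(r-\epsilon,r+\epsilon)$ then $X\in{\mathcal Q}_{r-\epsilon}\cap{\mathcal P}_{r+\epsilon}$ (up to the summand bookkeeping), so $({\rm coker}(m'),X)=0$ because ${\rm coker}(m')\in{\rm add}({\mathcal Q}_{r+\epsilon})$ and $({\mathcal Q}_{r+\epsilon},{\mathcal P}_{r+\epsilon})=0$ contains this Hom-space — hence evaluation at $m'$ is a bijection $(M',X)\xrightarrow{\sim}\phi'(X)$, giving $\dim\phi'(X)=\dim(M',X)$ and the parenthetical remark. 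Finally $\phi(X)=\phi'(X)$ for such $X$: $\subseteq$ is the content of $\phi'\sim_r\phi$ near $r$ (which I arranged in the minimality step), and $\supseteq$ is $\phi'\leq\phi$; combining gives equality, and then $\dim\phi(X)=\dim\phi'(X)$ follows. The main obstacle, to repeat, is the middle step: proving one can choose $M'$ (equivalently $\phi'$) so that \emph{both} $M'$ sits below $r-\epsilon$ \emph{and} ${\rm coker}(m')$ sits above $r+\epsilon$ while keeping $\phi'$ equivalent to $\phi$ at $r$ — the two requirements pull in opposite directions and reconciling them is exactly where the tubular structure theorem \ref{canonmodcat} is essential.
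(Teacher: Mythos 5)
Your opening move is the same as the paper's -- take a free realisation $(N,n)$ of $\phi$ and split $N=M\oplus L$ with $M\in{\rm add}({\mathcal P}_r)$ and $L\in{\rm add}({\mathcal Q}_r)$, possible because $r$ is irrational -- your observation that $\phi'\leq\phi$ is automatic when the distinguished element is taken to be the low-slope component of $n$ is correct, and your final paragraph, deducing the last two bullets from the first two via \ref{realpp} and the vanishing of $({\rm coker}(m'),X)$ for $X$ in the interval, is indeed how those follow. The trouble is that the entire content of the theorem sits in the step you yourself label the delicate point and then do not carry out: producing $(M',m')$ with $M'$ below $r$, ${\rm coker}(m')$ above $r$, \emph{and} $\phi'(X)=\phi(X)$ for $X$ near $r$. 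Your proposed fix -- minimise ${\rm dim}\,M'$ among realisations with $\phi'\sim_r\phi$ and argue that a low-slope cokernel summand would permit a reduction -- is only announced, not proved: no argument is given that such a summand can be removed while the solution sets near $r$ are preserved, and that is precisely what needs proving. Moreover your working notion of equivalence (``$\phi'\sim_r\phi$, equivalently $\phi'(X)=\phi(X)$ for slopes near $r$'') presupposes the equivalence of openness at $r$ with openness near the left of $r$, which in the paper is \ref{cofinrtl} together with \ref{allopenlh} and is proved \emph{after}, and using, \ref{ppashom}; to avoid circularity the construction must be phrased purely in terms of finite-dimensional modules in an interval, which your sketch does not do.

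For comparison, the paper's proof is not an iteration or minimality argument but one explicit construction followed by a dimension count. With $n=m+l$ as above, it decomposes ${\rm coker}(m\colon A\to M)$ as $C_L\oplus C_R$ with $C_L\in{\rm add}({\mathcal Q}_r)$ and $C_R\in{\rm add}({\mathcal P}_r)$, lets $K_L$ be the kernel of the induced map $M\to C_L$ (a submodule of $M$ containing $m$, with $K_L/Am\simeq C_R$), takes $(M',m')=(K_L,m)$ with $\phi'$ generating the pp-type of $m$ in $K_L$, and chooses $\epsilon$ so that no indecomposable summand of $N$, $C_L$, $C_R$, $K_L$, $K_R$ lies in $(r-\epsilon,r+\epsilon)$. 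The two requirements you found pulling in opposite directions are then reconciled not by further shrinking but by computation: applying $(-,X)$ to $0\to K_L\to M\to C_L\to 0$ for $X$ in the interval, and using the Auslander-Reiten formula \ref{AusReit} together with the separation properties of \ref{canonmodcat} to dispose of the relevant Hom and Ext terms, gives ${\rm dim}\,\phi'(X)={\rm dim}(K_L,X)={\rm dim}(M,X)-{\rm dim}(C_L,X)={\rm dim}\,\phi(X)$, after which equality of the subgroups follows from the comparability of $\phi$ and $\phi'$. So the missing middle of your argument is supplied in the paper by this kernel construction plus Hom/Ext bookkeeping; without an argument of that kind your proposal does not establish the theorem.
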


\begin{proof} We strongly use the fact that morphisms in $A\mbox{-}{\rm mod}$ `go from left to right' (see \ref{canonmodcat}).  Let $ (N,n) $ be a free realisation of $ \phi $. Decompose $ N $ as $ M\oplus N_R $ with $ M\in {\rm add}({\mathcal P}_r) $ and $ N_R\in {\rm add}({\mathcal Q}_r) $ and set $ n=m+l$ accordingly.  Also decompose $ M/Am $ as $ C_L\oplus C_R $ with $ C_L\in {\rm add}({\mathcal P}_r) $ and $ C_R\in {\rm add}({\mathcal Q}_r)$. Denote by $ \pi _L $ and $ \pi _R $ the respective compositions of the map $ M\rightarrow M/Am $ with the projections to $C_L$ and $C_R$; set $ K_L={\rm ker}(\pi _L) $ and $ K_R={\rm ker}(\pi _R)$. Notice that both $ K_L $ and $ K_R $ are in $ {\rm add}(P_r) $ and that $ m $ lies in their intersection, indeed, generates that. Since also $ K_L+K_R=M $ we have $ K_L/Am\simeq C_R$.

Let $ \phi ' $ be a pp formula which generates the pp-type of $ m $ in $ K_L$. Choose $ \epsilon >0 $ such that no indecomposable summand of $ N$, $ C_L$, $ C_R$, $ K_R $ or $ K_L $ is in $ (r-\epsilon ,r+\epsilon )$.

Given any $ X $ in $ (r-\epsilon ,r+\epsilon)$, $ (C_R,X)=0 $ so $ (M/Am,X)\simeq (C_L,X)$. Therefore, by \ref{freereal}, \ref{realpp}, and since $(N_L,X)=0$,

${\rm dim}\,\phi (X) ={\rm dim}(M,X) -{\rm dim}(C_L,X)$.

\noindent Similarly, since $ C_R\simeq K_L/Am $ and, therefore, for $ X $ in $ (r-\epsilon , r+\epsilon)$, $ (K_L/Am,X)=0$, we have

${\rm dim}\,\phi '(X)={\rm dim}(K_L,X)$.

\noindent From the exact sequence $ 0\rightarrow K_L\rightarrow  M\rightarrow C_L \rightarrow 0 $ we have the exact sequence

$ 0\rightarrow (C_L,X) \rightarrow (M,X) \rightarrow  (K_L,X)\rightarrow  {\rm Ext}(C_L,X)=0 $

\noindent since $ {\rm dim}\, {\rm Ext}\, (C_L,X) ={\rm dim} (\tau^{-1}X,C_L)=0. $ Therefore:

${\rm dim}\,\phi '(X)={\rm dim}(K_L,X) ={\rm dim}(M,X) -{\rm dim}(C_L,X) ={\rm dim}\, \phi (X)$.

\noindent If we let $\phi_L$, respectively $\phi_R$, denote a pp formula generating the pp-type of $m$ in $M$, resp.~of $l$ in $N_R$, then (by the comments after \ref{freereal}) $ \phi ' \geq \phi_L $ and also $\phi_R(X)=0$ for $X$ in $ (r-\epsilon ,r+\epsilon)$, so $ \phi (X)=\phi '(X) $ for all $ X $ in $ (r-\epsilon ,r+\epsilon)$.  Setting $ (M',m')=(K_L,m) $ completes the proof (the last statement follows from the above lines and \ref{realpp}).
\end{proof}

In fact, the restriction to $X$ being finite-dimensional in the listed properties is not necessary.  We show this (\ref{ppashominf}) next.

\begin{prop}\label{ReiRinLem}\marginpar{ReiRinLem} (part of \cite[Lemma 11]{ReiRin}) For every $r\in {\mathbb R}^+\cup \{\infty\}$, every module satisfying $(M,{\mathcal P}_r)=0$ is generated by ${\mathcal T}_q$ for every rational $q$ with $0<q<r$.
\end{prop}

Using this, we derive the following.

\begin{lemma}\label{limit}\marginpar{limit} Suppose that $M$ is a module of positive slope $r>0$.  Then for every $\epsilon>0$, $M$ is the directed union of its finite-dimensional submodules in $(r-\epsilon,r]$, indeed in $(r-\epsilon,r)$ in the case that $r$ is irrational.
\end{lemma}

\begin{proof} Choose a rational $q$ in $(r-\epsilon, r)$; then $M$ is, by \ref{ReiRinLem}, the directed union of images of morphisms from modules of slope $q$ and, since $({\mathcal Q}_r,M)=0$, the image of any such morphism has no component with slope $>r$ hence is in $[q,r] \subseteq (r-\epsilon,r]$ and, in the case that $r$ is irrational, is in $[q,r) \subseteq (r-\epsilon,r)$.
\end{proof}

Following \cite{LenzTrans}, say that a module $M$ is {\bf supported on} (or has support in) an interval $(a,b)$ if it is a direct limit of finite-dimensional modules with slope in $(a,b)$.  So we have just seen that every $M$ with slope $r$ is supported on $(r-\epsilon,r+\epsilon)$.

\begin{cor}\label{ppashominf}\marginpar{ppashominf} With assumptions and notation as in \ref{ppashom} there is $\epsilon>0$ such that the conclusions hold also for every module $X\in A\mbox{-}{\rm Mod}$ of slope in (or, more generally, which is supported on) $(r-\epsilon, r+\epsilon)$.
\end{cor}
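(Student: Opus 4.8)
The plan is to bootstrap from the finite-dimensional statement in \ref{ppashom} using the approximation of arbitrary modules in $\mathcal{D}_r$ (and nearby definable categories) by their finite-dimensional submodules/quotients of slope in a small interval, as provided by \ref{limit}. First I would observe that the formulas $\phi$ and $\phi'$, the free realisation $(M',m')$, and $\epsilon>0$ are all fixed by \ref{ppashom}; the job is only to verify the four bulleted conclusions for infinite-dimensional $X$ supported on a possibly smaller interval $(r-\epsilon',r+\epsilon')$. The first two bullets are about $M'$ and $\mathrm{coker}(m')$ and are unchanged, so really only the last two — the statements $\phi(X)=\phi'(X)$ and $\dim\phi(X)=\dim\phi'(X)=\dim(M',X)$, together with ``evaluation at $m'$ is a bijection'' — need attention.

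The key step is to express $X$ as a directed union $X=\varinjlim X_i$ of finite-dimensional submodules $X_i$ with slope in $(r-\epsilon,r)$, which is exactly what \ref{limit} gives (for $X\in\mathcal{D}_r$; for $X$ merely supported on $(r-\epsilon',r+\epsilon')$ one writes $X$ as a direct limit of finite-dimensionals of slope in that interval directly from the definition of ``supported on''). Then I would use that the relevant functors commute with direct limits: $(M',-)$ commutes with direct limits since $M'$ is finitely presented, $(\mathrm{coker}(m'),-)$ likewise, and pp-definable subgroups commute with direct limits (a pp formula, being positive primitive, is preserved and reflected by directed colimits — see \cite[1.2.3 or similar]{PreNBK}). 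Hence $\phi(X)=\varinjlim\phi(X_i)$, $\phi'(X)=\varinjlim\phi'(X_i)$, $(M',X)=\varinjlim(M',X_i)$ and similarly for $\mathrm{coker}(m')$. Since by \ref{ppashom} each of these agrees at the level of $X_i$ (including the bijection induced by evaluation at $m'$, which is natural in $X_i$), and directed colimits of isomorphisms are isomorphisms, the conclusions transfer to $X$. The equality $\phi(X)=\phi'(X)$ as \emph{subgroups} of $X$ follows because $\phi'\leq\phi$ always and, at each stage, $\phi'(X_i)=\phi(X_i)$ with the inclusions compatible with the maps $X_i\to X$.

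The main subtlety, and where I would be most careful, is the choice of $\epsilon'$ and ensuring the finite-dimensional approximants genuinely land in the interval on which \ref{ppashom} applies. The conclusions of \ref{ppashom} are stated for $X$ in the open interval $(r-\epsilon,r+\epsilon)$; \ref{limit} produces submodules of slope in $(r-\epsilon,r)$ — fine for $X\in\mathcal{D}_r$ — but for $X$ supported on $(r-\epsilon',r+\epsilon')$ with $\epsilon'<\epsilon$ we need the finite-dimensional pieces of $X$ to have slope strictly inside $(r-\epsilon,r+\epsilon)$, which is why taking $\epsilon'$ strictly smaller than $\epsilon$ is forced. A second point to check is that when $X$ has components of slope $>r$ (the ``$r+\epsilon'$'' side), \ref{limit} as stated no longer applies verbatim, so for that case one should fall back on the raw definition of ``supported on $(r-\epsilon',r+\epsilon')$'': $X$ is by fiat a direct limit of finite-dimensional modules of slope in that interval, and one feeds those directly into \ref{ppashom}. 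Modulo these bookkeeping choices, the argument is a routine ``pass to the colimit'' once \ref{ppashom} and \ref{limit} are in hand.
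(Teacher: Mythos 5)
Your argument is correct and is essentially the paper's own proof: one fixes $\epsilon'$ with $0<\epsilon'<\epsilon$, writes any module supported on $(r-\epsilon',r+\epsilon')$ as a direct limit (directed union) of finite-dimensional modules lying in $(r-\epsilon,r+\epsilon)$ via \ref{limit} (or the definition of support), and transfers the conclusions of \ref{ppashom} using the fact that $\phi(-)$, $\phi'(-)$ and the representable functors $(M',-)$, $({\rm coker}(m'),-)$ commute with direct limits. Your additional care about which interval the approximants land in, and about getting equality of subgroups (not just dimensions) from the compatible stagewise equalities, just spells out details the paper leaves implicit.
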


\begin{proof} If $\phi$ is a pp formula then the functor $\phi(-)$ commutes with direct limits (e.g.~\cite[1.2.31]{PreNBK}), as do representable functors $(X,-)$ where $X$ is finitely presented, so this follows directly from the conclusions of \ref{ppashom}.
\end{proof}

Let us also say that $M\in A\mbox{-}{\rm Mod}$ {\bf lies over} the interval $(a,b)$ if $ (M,X)=0 $ for all $X\in A\mbox{-}{\rm ind}$ of slope $\leq a$ and $(X,M)=0$ for all $X\in A\mbox{-}{\rm ind}$ of slope $\geq b$.  For finite-dimensional modules $M$ these conditions, of being supported on and of lying over an open interval, are equivalent and, by \ref{canonmodcat}, are what we already have referred to as being ``in" $(a,b)$.  Write ${\mathcal D}_{(a,b)}$ for the category of modules supported in $(a,b)$ and ${\mathcal D}^+_{(a,b)}$ for the category of modules which lie over $(a,b)$; by \cite[2.1]{LenzTrans} and \ref{extfpfp} respectively these are definable subcategories of $A\mbox{-}{\rm Mod}$.

\begin{lemma}\label{ddplus}\marginpar{ddplus} Given an open interval $(a,b) \subseteq {\mathbb R}^+$, every module in ${\mathcal D}_{(a,b)}$ is a union of its finite-dimensional modules in $(a,b)$ and we have ${\mathcal D}_{(a,b)} \subseteq {\mathcal D}^+_{(a,b)}$.
\end{lemma}
\begin{proof} The first statement is by (the proof of) \ref{limit}.  Write $M\in {\mathcal D}_{(a,b)}$ as a direct limit $\varinjlim_i M_i$ of finite-dimensional modules with slope in $(a,b)$.  For each $i$ we have $(M_i,X)=0$ for each finite-dimensional $X$ with slope $\leq a$, hence $(M,X)=0$, by definition of direct limit.  If $X\in A\mbox{-}{\rm ind}$ has slope $\geq b$ then $(X,M_i)=0$ for all $i$ so, since $X$ is finitely presented, $(X,M)=0$ as required.
\end{proof}

Since every definable subcategory is determined by the indecomposable pure-injectives in it, we can describe the difference between ${\mathcal D}_{(a,b)} $ and $ {\mathcal D}^+_{(a,b)}$ in terms of these (the infinite-dimensional ones, since these categories contain the same finite-dimensional modules).  Note that the indecomposable pure-injectives in ${\mathcal D}_{(a,b)}$ are (in consequence of \ref{indecslope}): the finite-dimensional ones with (rational) slope in $(a,b)$; the adic, generic and Pr\"{u}fer modules of rational slope in $(a,b)$; those with irrational slope in $(a,b)$.  Since every finite-dimensional indecomposable is an open point of the Ziegler spectrum (\cite[13.1]{PreBk}, see \cite[5.3.33]{PreNBK}), the definable subcategory generated by the infinite-dimensional indecomposable pure-injectives in ${\mathcal D}_{(a,b)} $ (or $ {\mathcal D}^+_{(a,b)}$) will contain no finite-dimensional modules so could be seen as the ``infinite part" of the category of modules supported on (or lying over) the interval $(a,b)$.

\begin{lemma}\label{ddplusdiff}\marginpar{ddplusdiff} Take $0<a<b \in {\mathbb R}$.

\noindent (a) If $a$ is irrational then the infinite-dimensional indecomposable pure-injective modules in ${\mathcal D}_{(a,b)}^+ \setminus {\mathcal D}_{(a,b)}$ are exactly those of slope $a$.

\noindent (b) If $a$ is rational then the infinite-dimensional indecomposable pure-injective modules in ${\mathcal D}_{(a,b)}^+ \setminus {\mathcal D}_{(a,b)}$ are exactly the Pr\"{u}fer modules of slope $a$ and the indecomposable generic module of slope $a$.
\end{lemma}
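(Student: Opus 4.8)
The plan is to sort the infinite-dimensional indecomposable pure-injectives $N$ in the difference by their slope $r$, which exists by \ref{indecslope}. First reduce: by \ref{ddplus} we already have $\mathcal{D}_{(a,b)}\subseteq\mathcal{D}^+_{(a,b)}$, and the two categories contain exactly the same finite-dimensional modules, namely those all of whose indecomposable summands have slope in $(a,b)$; so every module in $\mathcal{D}^+_{(a,b)}\setminus\mathcal{D}_{(a,b)}$ is infinite-dimensional. It therefore suffices to decide, for each infinite-dimensional indecomposable pure-injective $N$ of slope $r$, whether $N\in\mathcal{D}^+_{(a,b)}$ and, if so, whether $N\in\mathcal{D}_{(a,b)}$.

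The engine is \cite[Lemma 11]{ReiRin}: if $(N,\mathcal{P}_s)=0$ then, for any rational $q<s$, $N$ is the directed union of the images of morphisms into $N$ from modules in $\mathrm{add}(\mathcal{T}_q)$; each such image, being a quotient of a module in $\mathrm{add}(\mathcal{T}_q)$, has all its indecomposable summands of slope $\geq q$ (using $(\mathcal{T}_q,\mathcal{P}_q)=0$), while being a finite-dimensional submodule of $N$. Pairing this with a ceiling on the slopes of submodules of $N$ does the work. If $r<a$, then $N\in\mathcal{D}^+_{(a,b)}$ forces $(N,\mathcal{P}_a)=0$, while $(\mathcal{Q}_r,N)=0$ (slope $r$) forbids any submodule of $N$ a summand of slope $>r$; choosing a rational $q$ with $r<q<a$ makes all the above finite-dimensional submodules zero, so $N=0$. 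Symmetrically, if $r>b$, membership in $\mathcal{D}^+_{(a,b)}$ gives $(\mathcal{Q}_{q'},N)=0$ for a rational $q'\in(b,r)$, which with $(N,\mathcal{P}_r)=0$ again forces $N=0$. Finally, if $a<r\leq b$: here $(N,\mathcal{P}_r)=0$, and submodules of $N$ have no summand of slope $\geq b$ — from $(\mathcal{Q}_r,N)=0$ when $r<b$, and directly from the definition of $\mathcal{D}^+_{(a,b)}$ when $r=b$ — so, picking a rational $q\in(a,r)$, $N$ is a directed union of finite-dimensional submodules all of whose indecomposable summands have slope in $[q,b)\subseteq(a,b)$; hence $N\in\mathcal{D}_{(a,b)}$ and such $N$ contribute nothing to the difference. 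So only $r=a$ remains.

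When $r=a$ the same machinery runs the other way: if $N\neq0$ lies in $\mathcal{D}^+_{(a,b)}$ and has slope $a$ then it cannot lie in $\mathcal{D}_{(a,b)}$, since by \ref{ddplus} a module in $\mathcal{D}_{(a,b)}$ is a directed union of finite-dimensional submodules of slope in $(a,b)$, any nonzero one of which would have a summand of slope $>a$ embedding into $N$, contradicting $(\mathcal{Q}_a,N)=0$. Thus $\mathcal{D}^+_{(a,b)}\setminus\mathcal{D}_{(a,b)}$ is precisely the set of infinite-dimensional indecomposable pure-injectives of slope $a$ that lie in $\mathcal{D}^+_{(a,b)}$. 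If $a$ is irrational, every such module does: since $a<b$, the conditions cutting out $\mathcal{D}^+_{(a,b)}$ reduce, for a module of slope $a$, to $(N,\mathcal{P}_a)=0$ and $(\mathcal{Q}_a,N)=0$, which hold by definition of slope; this gives (a). If $a$ is rational, the only condition beyond those implied by slope $a$ is $(N,\mathcal{T}_a)=0$, i.e.\ no nonzero homomorphism from $N$ to a finite-dimensional module of slope exactly $a$. Here I would invoke the known list of indecomposable pure-injectives of rational slope $a$ — the finite-dimensional ones, the adic and Pr\"ufer modules over the tubes in $\mathcal{T}_a$, and the generic module of slope $a$ (see \cite{KraGenc}, \cite{RinTubes}, \cite[\S3.5]{Har}) — together with the standard facts that Pr\"ufer and generic modules of slope $a$ are divisible along the tubes in $\mathcal{T}_a$, hence have no nonzero finite-dimensional quotient of slope $a$ and so satisfy $(N,\mathcal{T}_a)=0$, whereas an adic module of slope $a$ surjects onto the finite-dimensional modules at the mouth of its tube and so has $(N,\mathcal{T}_a)\neq0$. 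Hence the modules of slope $a$ in $\mathcal{D}^+_{(a,b)}$ are exactly the Pr\"ufer and generic ones, giving (b).

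The \cite[Lemma 11]{ReiRin} bookkeeping is routine; I expect the real content to be the $r=a$ rational case, where one must correctly import the module theory of the tubes in $\mathcal{T}_a$ — in particular the asymmetry between adic modules (which map onto the finite-dimensional modules of their own slope) and Pr\"ufer and generic modules (which do not) — and match it against the one-sided $\mathrm{Hom}$-vanishing built into the definition of $\mathcal{D}^+_{(a,b)}$. One point worth flagging is that the argument of the second paragraph quietly shows that the adic and generic modules of slope $b$ are swept into $\mathcal{D}_{(a,b)}$: although their only finite-dimensional submodule of slope exactly $b$ is zero, \cite[Lemma 11]{ReiRin} exhibits them as directed unions of finite-dimensional submodules of slope just below $b$, so that they are supported on $(a,b)$. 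This is precisely why the right-hand endpoint $b$ behaves differently from the left-hand endpoint $a$.
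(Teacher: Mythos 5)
Your proposal is correct and follows essentially the same route as the paper: sort the indecomposable pure-injectives by their slope, use \cite[Lemma 11]{ReiRin} together with the defining Hom-vanishing conditions to show that only slope $a$ contributes to ${\mathcal D}^+_{(a,b)}\setminus{\mathcal D}_{(a,b)}$, and then, for rational $a$, separate the adic modules (which map onto modules in ${\mathcal T}_a$) from the Pr\"{u}fer and generic modules via the extra condition $(N,{\mathcal T}_a)=0$. The only real difference is bibliographic -- you invoke the known list of indecomposable pure-injectives of rational slope (\cite{KraGenc}, \cite{RinTubes}) plus divisibility/torsion-freeness, where the paper cites \cite[Thm.~4]{ReiRin} -- which does not change the substance of the argument.
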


\begin{proof} By \ref{limit} any indecomposable pure-injective in ${\mathcal D}_{(a,b)}^+ \setminus {\mathcal D}_{(a,b)}$ must have slope $a$.  In the case that $a$ is irrational and $M$ has slope $a$ then $M$ is in ${\mathcal D}_{(a,b)}^+$ but any finite-dimensional module with a non-zero morphism to $M$ must have slope $<a$ so $M$ is not in ${\mathcal D}_{(a,b)}$.

For (b), if $a$ is rational and $M$ is an indecomposable pure-injective with slope $a$, then it is is adic, Pr\"{u}fer or generic.  By \cite[Thm.~4]{ReiRin}, the Pr\"{u}fer and generic modules of slope $a$ are exactly the indecomposable pure-injectives with no nonzero morphism to a finite-dimensional module of slope $a$ and hence which satisfy the defining conditions for ${\mathcal D}^+_{(a,b)}$.
\end{proof}

\begin{cor}\label{dijoint}\marginpar{disjoint} If $r\neq s$ are positive real numbers then ${\mathcal D}_r \cap {\mathcal D}_s =0$.  If $(a,b)$ and $(c,d)$ are disjoint open intervals with $b\leq c$ then ${\mathcal D}_{(a,b)} \cap {\mathcal D}_{(c,d)} =0$.
\end{cor}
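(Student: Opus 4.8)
The plan is to reduce everything to the structural facts about indecomposable pure-injectives already established, using the principle that a definable subcategory is zero iff it contains no nonzero indecomposable pure-injective. For the first statement, suppose $N$ is a nonzero module in ${\mathcal D}_r \cap {\mathcal D}_s$ with, say, $r<s$. By \ref{indecslope} (more precisely, by the discussion showing that every indecomposable in a nonzero definable subcategory of this form has the given slope) any indecomposable pure-injective summand of the pure-injective hull of $N$ would have slope $r$ and slope $s$ simultaneously, which is impossible since slope is a well-defined invariant: the conditions $(M,{\mathcal P}_r)=0=({\mathcal Q}_r,M)$ for two distinct $r$ are mutually exclusive on a nonzero module (if $r<s$ then pick a rational $q$ with $r<q<s$; slope $r$ forces $({\mathcal T}_q,M)=0$ via ${\mathcal T}_q \subseteq {\mathcal Q}_r$, while slope $s$ forces $M$ to be generated by ${\mathcal T}_q$ by \cite[Lemma 11]{ReiRin}, forcing $M=0$). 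Since ${\mathcal D}_r \cap {\mathcal D}_s$ is itself a definable subcategory (an intersection of definable subcategories) and contains no nonzero indecomposable pure-injective, it is $0$.

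For the second statement, let $(a,b)$ and $(c,d)$ be disjoint open subintervals of ${\mathbb R}^+$; without loss of generality $b \le c$. Again ${\mathcal D}_{(a,b)} \cap {\mathcal D}_{(c,d)}$ is definable (by \ref{ddplus} and its preamble both factors are definable), so it suffices to show it contains no nonzero indecomposable pure-injective $N$. By \ref{ddplus}, membership in ${\mathcal D}_{(a,b)}$ forces $N$ to be a union of its finite-dimensional submodules of slope in $(a,b)$, hence in particular $(X,N)=0$ for every finite-dimensional indecomposable $X$ of slope $\ge b$; symmetrically, $N \in {\mathcal D}_{(c,d)}$ forces $N$ to be generated (as a direct limit of images) by finite-dimensional modules of slope in $(c,d) \subseteq [b,d)$. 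Combining: $N$ is a directed union of images of maps out of modules of slope $\ge b$, but every such image has no summand of slope $< b$ — yet $N$, being also supported on $(a,b)$, has all its finite-dimensional submodules of slope $< b$. Chasing a nonzero element through a finite-dimensional submodule of slope in $(a,b)$ and simultaneously through the generating system from $(c,d)$ yields a contradiction just as in the first part; concretely, pick a rational $q$ with $b \le q < d$ (or handle the boundary case using \ref{ddplusdiff} when $b=c$ is rational, where the only possible infinite-dimensional indecomposable pure-injectives lying on the boundary are Prüfer or generic of slope $b$, and these are not supported on $(a,b)$ since their maps to ${\mathcal T}_b$ are nonzero). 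In every case $N=0$.

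The one genuine subtlety, and the step I expect to require the most care, is the boundary case where the intervals abut at a common rational endpoint, i.e. $b=c \in {\mathbb Q}$: here one must be sure that no indecomposable pure-injective of slope exactly $b$ (adic, Prüfer, generic, or finite-dimensional) lies in both ${\mathcal D}_{(a,b)}$ and ${\mathcal D}_{(b,d)}$. This is exactly what \ref{ddplusdiff}(b) and the description of ${\mathcal D}_{(a,b)}$ before \ref{ddplusdiff} are set up to handle: the finite-dimensional ones of slope $b$ are excluded from ${\mathcal D}_{(b,d)}$ (their slope is not in the open interval $(b,d)$), the adic ones of slope $b$ are excluded from ${\mathcal D}_{(a,b)}$ (they map nontrivially to ${\mathcal T}_b$, so are not supported below $b$), and the Prüfer and generic ones of slope $b$, while they do lie over $(a,b)$ and over $(b,d)$, are \emph{not supported on} either open interval — so they lie in neither ${\mathcal D}_{(a,b)}$ nor ${\mathcal D}_{(b,d)}$. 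Everything else reduces to the slope-disjointness argument of the first paragraph together with \ref{ddplus}; these are all invocations of results proved above, so no new machinery is needed.
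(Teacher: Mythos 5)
Your proposal is correct and takes essentially the paper's route: the first statement is the paper's argument (uniqueness of slope plus the fact that a nonzero definable subcategory contains an indecomposable pure-injective, and your direct check via \cite[Lemma 11]{ReiRin} even makes the reduction to pure-injectives unnecessary), while the second is the paper's ``follows directly from \ref{ddplus}'' argument: a module $N\in {\mathcal D}_{(c,d)}$ is the union of its finite-dimensional submodules with slope in $(c,d)$, and ${\mathcal D}_{(a,b)}\subseteq {\mathcal D}^+_{(a,b)}$ gives $(X,N)=0$ for every finite-dimensional $X$ of slope $\geq b$, so all those submodules, and hence $N$, vanish. Your separate boundary case $b=c\in{\mathbb Q}$ is not needed (openness of $(c,d)$ makes the argument above uniform), and note that \ref{ddplusdiff} concerns the left endpoint of an interval, so it does not justify your side claim that the Pr\"{u}fer and generic modules of slope $b$ are not supported on $(a,b)$ (for the generic module this is in fact doubtful); however, the half you actually need --- that they are not in ${\mathcal D}_{(b,d)}$ --- is exactly what \ref{ddplusdiff}(b) gives, so nothing in your argument breaks.
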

\begin{proof} The first statement follows from uniqueness of slopes since every nonzero definable category contains an indecomposable pure-injective.  The second, a special case of \cite[2.7]{LenzTrans}, follows directly from \ref{ddplus}.
\end{proof}

The uniqueness of slope, \ref{indecslope}, means that no infinite-dimensional module can ``disappear from view over an interval" as far as the finite-dimensional modules are concerned.  That is, there is no module $M$ such that $(M, {\mathcal P}_s)=0$ and $({\mathcal Q}_r,M)=0$ with $r<s$.  For, by \cite[6.9]{Zie}, $M$ is elementarily equivalent to a direct sum of indecomposable pure-injectives, each of which, since it would have to satisfy these (equivalent to) pp conditions, would have to have slope both $<r$ and $>s$, contradicting \ref{indecslope}.

\begin{cor}\label{vdotdim}\marginpar{vdotdim} Let $ \phi /\psi  $ be a pp-pair and let $ r $ be a positive irrational. Then there is $ \epsilon >0 $ and a vector $ v\in K_0(A) $ such that $ {\rm dim}((\phi /\psi )(X))=v\cdot \underline{\rm dim}(X) $ for all $ X $ in $ (r-\epsilon ,r+\epsilon )$.
\end{cor}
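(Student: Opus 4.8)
The plan is to treat the numerator $\phi$ and the denominator $\psi$ of the pair separately, replace each by a representable functor using \ref{ppashom}, and then pass from ${\rm Hom}$-dimensions to the Euler form, which is linear in $\underline{\rm dim}(X)$. First I would apply \ref{ppashom} to $\phi$ to obtain $\epsilon_1>0$, a free realisation $(M_1,m_1)$ of some $\phi_1\leq\phi$ with $M_1\in{\rm add}(\mathcal{P}_{r-\epsilon_1})$, and the identity ${\rm dim}\,\phi(X)={\rm dim}(M_1,X)$ for all $X$ in $(r-\epsilon_1,r+\epsilon_1)$; similarly apply it to $\psi$ to obtain $\epsilon_2>0$ and $(M_2,m_2)$ with $M_2\in{\rm add}(\mathcal{P}_{r-\epsilon_2})$ and ${\rm dim}\,\psi(X)={\rm dim}(M_2,X)$ for $X$ in $(r-\epsilon_2,r+\epsilon_2)$. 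Setting $\epsilon=\min\{\epsilon_1,\epsilon_2,r\}$ makes $(r-\epsilon,r+\epsilon)\subseteq\mathbb{R}^+$ and keeps both identities valid there, so, since $\psi\leq\phi$, we get ${\rm dim}\,((\phi/\psi)(X))={\rm dim}(M_1,X)-{\rm dim}(M_2,X)$ for all such $X$.

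The core step is to show that for $X$ in $(r-\epsilon,r+\epsilon)$ one has ${\rm dim}(M_i,X)=\langle\underline{\rm dim}(M_i),\underline{\rm dim}(X)\rangle$ (for $i=1,2$); that is, the higher-${\rm Ext}$ corrections in the Euler characteristic formula vanish. Every indecomposable summand of such an $X$ lies in a tube $\mathcal{T}_q$ with $q$ a positive rational, hence by \ref{pdid1} has injective dimension $\leq 1$, so ${\rm Ext}^j(M_i,X)=0$ for all $j\geq 2$. For the remaining ${\rm Ext}^1$ term, the Auslander--Reiten formula \ref{AusReit} gives ${\rm Ext}^1(M_i,X)\cong(\tau^{-1}X,M_i)$. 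Writing $X=\bigoplus_j X_j$ with $X_j\in\mathcal{T}_{q_j}$ and $q_j\in(r-\epsilon,r+\epsilon)$, the module $\tau^{-1}X_j$ again lies in $\mathcal{T}_{q_j}$; choosing a rational $p_j$ with $r-\epsilon<p_j<q_j$, every indecomposable summand of $M_i$ has slope $<r-\epsilon_i\leq r-\epsilon<p_j$ or lies in $\mathcal{P}_0$, so $M_i\in{\rm add}(\mathcal{P}_{p_j})$, whereas $\tau^{-1}X_j\in\mathcal{T}_{q_j}\subseteq\mathcal{Q}_{p_j}$; hence $(\tau^{-1}X_j,M_i)=0$ by the separation property $(\mathcal{Q}_{p_j},\mathcal{P}_{p_j})=0$ from \ref{canonmodcat}. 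Summing over $j$ yields ${\rm Ext}^1(M_i,X)=0$, and since $A$ has finite global dimension the Euler characteristic formula reduces to ${\rm dim}(M_i,X)=\langle\underline{\rm dim}(M_i),\underline{\rm dim}(X)\rangle$.

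To finish, for a fixed $x\in K_0(A)$ the assignment $y\mapsto\langle x,y\rangle$ is a $\mathbb{Z}$-linear functional on $K_0(A)\cong\mathbb{Z}^n$, hence equals $y\mapsto w\cdot y$ for a unique $w\in\mathbb{Z}^n$; let $v_1,v_2\in K_0(A)$ be the vectors arising this way from $\underline{\rm dim}(M_1)$ and $\underline{\rm dim}(M_2)$ respectively. Then ${\rm dim}\,\phi(X)=v_1\cdot\underline{\rm dim}(X)$ and ${\rm dim}\,\psi(X)=v_2\cdot\underline{\rm dim}(X)$ for all $X$ in $(r-\epsilon,r+\epsilon)$, so ${\rm dim}\,((\phi/\psi)(X))=v\cdot\underline{\rm dim}(X)$ with $v=v_1-v_2$, as required.

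I expect the main obstacle to be the vanishing of ${\rm Ext}^1(M_i,X)$: one has to match the symmetric interval $(r-\epsilon,r+\epsilon)$ around $r$ against the one-sided location $\mathcal{P}_{r-\epsilon_i}$ of $M_i$, allow $X$ to have indecomposable summands of several distinct slopes, and treat separately those summands of $M_i$ that lie in $\mathcal{P}_0$ — all of which is absorbed by inserting, for each summand $X_j$ of $X$, a rational cut $p_j$ strictly between $r-\epsilon$ and the slope of $X_j$, and invoking the ${\rm Hom}$-orthogonality of the separating tubular families. The two applications of \ref{ppashom}, the bookkeeping with $\epsilon$, and the linearity of the Euler form should be routine.
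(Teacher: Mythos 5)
Your proof is correct and follows essentially the same route as the paper: apply \ref{ppashom} to $\phi$ and $\psi$ separately and then show ${\rm dim}(M_i,X)$ is linear in $\underline{\rm dim}(X)$ on a suitable interval, using the Auslander--Reiten formula together with the separation property to kill ${\rm Ext}^1(M_i,X)$ (exactly the vanishing the paper relies on). The only difference is cosmetic: the paper extracts linearity from a projective presentation $0\to P_1\to P_0\to M'\to 0$, whereas you invoke the Euler-form identity directly (noting ${\rm Ext}^2$ vanishes since $X$ has injective dimension $\leq 1$), which yields the same vector $v$.
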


\begin{proof} Let $M'$ and $\epsilon$ be as in \ref{ppashom} for $\phi$. Since the slope of $M'$ is $<\infty$, by \ref{pdid1} and the fact that modules in ${\mathcal P}_0 \cup {\mathcal T}_0$ have projective dimension $\leq 1$ (\cite[3.1(5)]{RinTame}), there is an exact sequence $ 0\rightarrow P' \rightarrow P \rightarrow  M'\rightarrow 0 $ with $ P $ and $ P' $ projective. This induces the exact sequence $ 0\rightarrow (M',X)\rightarrow  (P,X)\rightarrow (P',X) \rightarrow {\rm Ext}(M',X)=0 $ for $X$ in $(r-\epsilon, r+\epsilon)$ and consequently, using \ref{ppashom},

${\rm dim}(M',X) ={\rm dim}(P,X) -{\rm dim}(P',X)$.

Let $ P = P_1^{c_1} \oplus \dots \oplus P_n^{c_n} $ and $ P' = P_1^{d_1} \oplus \dots \oplus P_n^{d_n} $ and set $ v_1=(c_1-d_1, \dots, c_n-d_n)$. Then, for all $ X $ in $ (r-\epsilon ,r+\epsilon )$:

${\rm dim}\,\phi (X)={\rm dim}(M',X) =v_1\cdot \underline{\rm dim}(X)$.

Similarly, there is $\delta >0 $ and $ v_2\in K_0(A) $ such that for all $ X$ in $(r-\delta, r+\delta)$:

${\rm dim}\,\psi (X)=v_2\cdot \underline{\rm dim}(X)$.

Taking $ v=v_1-v_2 $ and relabelling $ {\rm min}(\epsilon ,\delta ) $ as $ \epsilon $ completes the proof.
\end{proof}

Say that a pp-pair $ \phi /\psi  $ is {\bf closed near the left of} $ r $ if there is $ \epsilon >0 $ such that $ \phi /\psi  $ is closed on every indecomposable $ X $ in $ (r-\epsilon ,r)$; otherwise say that $ \phi /\psi  $ is {\bf open near the left of} $ r$.  The latter says only that $ \phi /\psi  $ is open on ``cofinally many" modules near to, and to the left of, $ r $ but it will be proved (\ref{homogopen} then \ref{allopenlh}) that, in this case, $ \phi /\psi  $ is open on every module in some interval $ (r-\epsilon , r)$. Similarly say that $\phi /\psi  $ is {\bf closed near the right of} $ r $ if there is $ \epsilon >0 $ such that $ \phi /\psi  $ is closed on every indecomposable $ X $ in $ (r,r+\epsilon)$; otherwise say that $ \phi /\psi  $ is {\bf open near the right of} $ r$.

If $E$ is a quasisimple module (that is, a module at the mouth of a tube); then we will denote by $E[k]$ the module in the same tube which has quasisimple length $k$ and quasisimple socle $E$ (for the structure of modules in tubes we refer to the background references).

\begin{cor}\label{homogopen}\marginpar{homogopen} Let $ \phi /\psi  $ be a pp-pair and let $ r $ be a positive irrational.

If $ \phi /\psi  $ is open near the left of $ r$ then there is $ \epsilon >0 $ such that $ \phi /\psi  $ is open on every module in $ (r-\epsilon ,r) $ which lies in a homogeneous tube.

Similarly, if $ \phi /\psi  $ is open near the right of $ r$ then there is $ \epsilon >0 $ such that $ \phi /\psi  $ is open on every module in $ (r,r+\epsilon ) $ which lies in a homogeneous tube.
\end{cor}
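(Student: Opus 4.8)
The plan is to reduce the statement to the behaviour of a single integral linear functional, using \ref{vdotdim}, and then to exploit the rigidity of the dimension vectors occurring in the tubular families near $r$. First I would apply \ref{vdotdim} to $\phi/\psi$ and $r$ to obtain $\epsilon>0$ (shrunk so that $r-\epsilon>0$) and $v\in K_0(A)$ with ${\rm dim}\big((\phi/\psi)(X)\big)=v\cdot\underline{\rm dim}(X)$ for every finite-dimensional $X$ of slope in $(r-\epsilon,r+\epsilon)$; being a dimension, this is always $\geq 0$. Next I would recall, from the structure of tubular families (\cite[Ch.~5]{RinTame}), that for each rational $q\in(r-\epsilon,r+\epsilon)$ the family $\mathcal T_q$ has only finitely many non-homogeneous tubes, that there is a vector $w_q$ equal both to the dimension vector of any quasisimple at the mouth of a homogeneous tube of $\mathcal T_q$ and to the sum of the dimension vectors of the quasisimples in any single tube of $\mathcal T_q$, and that every module in a homogeneous tube of $\mathcal T_q$ has the form $E[k]$ with $\underline{\rm dim}(E[k])=k\,w_q$. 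Applying \ref{AusReit} and \ref{pdid1} to a homogeneous (hence $\tau$-fixed) quasisimple $E$ of $\mathcal T_q$ gives $\langle w_q,w_q\rangle={\rm dim}(E,E)-{\rm dim}\,{\rm Ext}^1(E,E)=0$, so $w_q$ is a radical vector of index $q$; since, using that $\langle h_0,h_\infty\rangle=-\langle h_\infty,h_0\rangle$ is nonzero, the radical vectors of index $q$ are exactly the rational multiples of $h_0+q\,h_\infty$, we may write $w_q=\lambda_q(h_0+q\,h_\infty)$ with $\lambda_q$ a positive rational (positive because $w_q$ is a nonzero dimension vector).

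Now put $g(t)=v\cdot h_0+t\,(v\cdot h_\infty)$, an affine function of $t$; then $v\cdot w_q=\lambda_q\,g(q)$, so for $X=E[k]$ in a homogeneous tube of slope $q\in(r-\epsilon,r+\epsilon)$ we have ${\rm dim}\big((\phi/\psi)(X)\big)=k\,\lambda_q\,g(q)$ with $k\lambda_q>0$, whence $\phi/\psi$ is open on $X$ precisely when $g(q)>0$ and closed precisely when $g(q)=0$. Taking $k=1$ and using $\lambda_q>0$ shows $g(q)\geq 0$ for every rational $q\in(r-\epsilon,r+\epsilon)$, so $g\geq 0$ on the whole interval; being affine and nonnegative on an open interval, $g$ is either identically $0$ on $(r-\epsilon,r+\epsilon)$ or strictly positive throughout. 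In the second case we are done with this very $\epsilon$: $\phi/\psi$ is open on every module of slope in $(r-\epsilon,r)$ lying in a homogeneous tube, and likewise on every such module of slope in $(r,r+\epsilon)$. So it remains only to rule out $g\equiv 0$.

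Suppose $\phi/\psi$ is open near the left of $r$ (the case ``near the right'' being identical) and, for contradiction, $g\equiv 0$; then $v\cdot w_q=\lambda_q\,g(q)=0$ for every rational $q\in(r-\epsilon,r+\epsilon)$. Let $X=E_i[\ell]$ be any indecomposable of slope $q$ in this range, lying in a tube of $\mathcal T_q$ of rank $p$ with quasisimples $E_1,\dots,E_p$ (cyclically labelled); writing $\ell=mp+s$ with $0\leq s<p$ and setting $X'=E_{i+s}[p-s]$ (or $X'=0$ when $s=0$), one gets $\underline{\rm dim}(X)+\underline{\rm dim}(X')=(m+1)\,w_q$ (resp.\ $m\,w_q$), since traversing the tube once adds $w_q$. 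Both $v\cdot\underline{\rm dim}(X)$ and $v\cdot\underline{\rm dim}(X')$ equal dimensions of $\phi/\psi$ on modules of slope in $(r-\epsilon,r+\epsilon)$, hence are $\geq 0$, while their sum is a multiple of $v\cdot w_q=0$; so both vanish and $\phi/\psi$ is closed on $X$. As every indecomposable of slope in $(r-\epsilon,r)$ arises this way, $\phi/\psi$ would be closed near the left of $r$, contrary to hypothesis. Hence $g>0$ on $(r-\epsilon,r+\epsilon)$ and the corollary follows.

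The step I expect to be the main obstacle is precisely the exclusion of $g\equiv 0$: the hypothesis delivers openness only on unspecified cofinal modules, which may lie in exceptional tubes of arbitrarily large rank, so to convert it into a statement about $g$ one needs both that the homogeneous-tube vector $w_q$ is a radical vector and the internal arithmetic of a tube that pairs $\underline{\rm dim}(X)$ with a complementary $\underline{\rm dim}(X')$ whose sum is a multiple of $w_q$. Once that is available, a functional orthogonal to $w_q$ and nonnegative on the interval must annihilate the dimension vector of every module of $\mathcal T_q$, and the otherwise weak hypothesis becomes decisive.
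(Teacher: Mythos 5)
Your argument is correct, but it follows a genuinely different route from the paper's. The paper proves the contrapositive directly from \ref{ppashom}: it replaces $\phi,\psi$ by $\sim_r$-equivalent formulas with free realisations $(M',m')$, $(N',n')$ so that on the interval ${\rm dim}\,\phi(X)={\rm dim}(M',X)$ and ${\rm dim}\,\psi(X)={\rm dim}(N',X)$; if $\phi/\psi$ were closed on a single homogeneous module $E[k]$ of slope $\gamma<r$, an induction along the almost split sequences of that homogeneous tube gives ${\rm dim}(M',E[m])={\rm dim}(N',E[m])$ for all $m$, and then the separating property of \ref{canonmodcat} (every map from $M'$ to a module in $(\gamma,r)$ factors through ${\rm add}({\mathcal T}(\rho))$) forces $\phi/\psi$ to be closed on everything in $(\gamma,r)$, contradicting openness near the left; the right-hand case is handled by a variant of the same factorisation argument. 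You instead linearise via \ref{vdotdim} and study the affine function $g(t)=v\cdot h_0+t\,(v\cdot h_\infty)$: nonnegativity of $g$ at all rationals in the interval (from homogeneous tubes existing at every rational slope there, with radical dimension vectors), the dichotomy $g\equiv 0$ or $g>0$, and the complementary-window trick $\underline{\rm dim}(X)+\underline{\rm dim}(X')\in{\mathbb N}\,w_q$ inside an arbitrary tube to show that $g\equiv 0$ would make $\phi/\psi$ closed on every indecomposable near $r$, contradicting the hypothesis. Your route leans on more structural input about tubular families (homogeneous tubes at each rational slope, $\chi$-isotropic quasisimple dimension vectors, the sum of the quasisimples of a tube being a radical vector of the same index) -- facts the paper itself invokes elsewhere, e.g.\ in \ref{nravg}, \ref{allopenlh} and \ref{cofinrtl} -- but it buys a stronger conclusion: from either one-sided hypothesis you get openness on all homogeneous-tube modules in a full two-sided neighbourhood, for an arbitrary tubular algebra, thereby anticipating part of what the paper only assembles later from \ref{homogopen}, \ref{cofinrtl} and \ref{allopenlh}. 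The paper's proof, by contrast, needs no dimension-vector arithmetic at all, only the separation property and AR theory within one tube.
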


\begin{proof} Apply \ref{ppashom} to obtain pp formulas $ \phi ' $ and $ \psi ' $, with free realisations $ (M',m') $ and $ (N',n') $, and $ \epsilon _1, \epsilon _2 $ satisfying the conclusions of that result. Set $ \epsilon ={\rm min}(\epsilon _1,\epsilon _2)$.

Suppose that there is $ \gamma \in (r-\epsilon ,r) \cap {\mathbb Q} $ and a module $ E[k] $ in a homogeneous tube $ {\mathcal T}(\rho ) $ in $ {\mathcal T}_\gamma  $ such that $ \phi /\psi  $ is closed on $ E[k]$. We shall prove that $ \phi /\psi  $ must be closed near the left of $ r$.

We have $ \phi '(E[k])=\phi (E[k]) =\psi (E[k])=\psi '(E[k]) $ and so, by \ref{ppashom}, $ {\rm dim}(M',E[k])={\rm dim}(N',E[k])$.  By considering almost split sequences in ${\mathcal T}(\rho)$ and induction, (or using \ref{vdotdim}) it is easy to check that for all positive integers $ m$, $ {\rm dim}(M',E[m])={\rm dim}(N',E[m])$. That is $ \phi '/\psi ' $ is closed on every module in $ {\mathcal T}(\rho )$ and hence on every module in ${\rm add}({\mathcal T}(\rho ))$.

Now, given $ X $ (with slope) in $ (\gamma ,r) $ and any $ x\in \phi (X)=\phi '(X)$, there is, by \ref{freereal}, $ f\in (M',X) $ such that $ f(m')=x$. By \ref{canonmodcat}, $ f $ factors through a module $ Y\in {\rm add}({\mathcal T}(\rho ))$.

$\xymatrix{M' \ar[rr]^f \ar@{.>}[dr]_{\exists g} &  & X\\ &  Y \ar@{.>}[ur]^{\exists h}}$

Since $ \phi '/\psi ' $ is closed on $ Y $ it follows that $ g(m')\in \psi '(Y) $ and so, since solution sets of pp formulas are preserved by homomorphisms, $ x\in \psi '(X)=\psi (X)$. Thus $ \phi /\psi  $ is closed on every module in $ (\gamma ,r) $ - as required.

For the second statement, by the argument above, if $\phi / \psi$ is open on some module $X$ of slope $\delta \in (r, r+\epsilon)$ then it is open on every module $E[k]$ in a homogeneous tube with slope in $ (r,\delta)$.
\end{proof}

\begin{cor}\label{cofinrtl}\marginpar{cofinrtl} Let $ \phi /\psi  $ be a pp-pair and let $ r $ be a positive irrational.  Then the following are equivalent:

\noindent (i) $ \phi /\psi  $ is open near the left of $ r$;

\noindent (ii) $ \phi /\psi  $ is open near the right of $r$;

\noindent (iii) $ \phi /\psi  $ is open at $r$, that is, in some module of slope $r$.
\end{cor}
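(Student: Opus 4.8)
The plan is to prove the cycle of implications (i) $\Rightarrow$ (ii) $\Rightarrow$ (iii) $\Rightarrow$ (i), using \ref{homogopen} as the crucial tool together with the structural fact \ref{vdotdim} that, near $r$, the function $X \mapsto {\rm dim}((\phi/\psi)(X))$ is given by a single linear functional $v \cdot \underline{\rm dim}(X)$. The starting observation is that \ref{vdotdim} reduces the whole question, on an interval $(r-\epsilon, r+\epsilon)$, to asking whether $v \cdot \underline{\rm dim}(X) = 0$ or not, so it cannot matter much whether we test this on modules to the left or to the right of $r$; the homogeneous-tube case of \ref{homogopen} will let us transport openness across $r$.

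First I would treat (i) $\Rightarrow$ (ii). Suppose $\phi/\psi$ is open near the left of $r$. By \ref{homogopen} there is $\epsilon > 0$ such that $\phi/\psi$ is open on every module in a homogeneous tube with slope in $(r-\epsilon, r)$; fix a rational $\gamma \in (r-\epsilon, r)$ and a homogeneous tube ${\mathcal T}(\rho)$ of slope $\gamma$, so ${\rm dim}((\phi/\psi)(E[k])) \neq 0$ for all $k$, where $E$ is a quasisimple in ${\mathcal T}(\rho)$. Now apply \ref{vdotdim} to get $\epsilon' > 0$ and a vector $v$ with ${\rm dim}((\phi/\psi)(X)) = v \cdot \underline{\rm dim}(X)$ for all $X$ in $(r-\epsilon', r+\epsilon')$; shrinking, we may assume $\gamma \in (r-\epsilon', r)$, so $v \cdot \underline{\rm dim}(E[k]) \neq 0$ for large $k$ (indeed all $k$ with $E[k]$ in the interval). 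The key point is then purely numerical: the dimension vectors $\underline{\rm dim}(E[k])$ for $k$ ranging over a tube of slope $\gamma < r$, when combined with the dimension vectors of modules of slope arbitrarily close to $r$ from the right, cannot all be annihilated by $v$ — because the index/slope of $E[k]$ is exactly $\gamma$ (by the index formula recalled after \ref{pdid1}), while $v \cdot \underline{\rm dim}(E[k]) \neq 0$ forces $v$ to be "transverse" to the slope-$\gamma$ direction, and by continuity (linearity) $v$ is then also transverse to slopes just above $r$. Concretely, if $\phi/\psi$ were closed near the right of $r$, then $v \cdot \underline{\rm dim}(X) = 0$ for all indecomposable $X$ with slope in some $(r, r+\delta)$; since such dimension vectors, together with those of slope $\gamma$, span a subspace of $K_0(A) \otimes \mathbb{Q}$ on which $v$ would vanish, and this subspace (being two "slope directions" worth, together with the radical) is large enough to force $v \cdot \underline{\rm dim}(E[k]) = 0$ — a contradiction. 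So (i) $\Rightarrow$ (ii), and (ii) $\Rightarrow$ (i) is symmetric by the second half of \ref{homogopen}.

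For the equivalence with (iii): (iii) $\Rightarrow$ (i) and (iii) $\Rightarrow$ (ii) are immediate, since "open at $r$" means open on some module of slope $r$, hence (being a module in ${\mathcal D}_r \subseteq {\mathcal D}_{(r-\epsilon, r+\epsilon)}$, supported on every interval around $r$ by \ref{limit}/\ref{ddplus}) $\phi/\psi$ is open arbitrarily close to $r$ on both sides after applying \ref{ppashominf}. For the converse, say (i) holds; by what we have just shown, (ii) also holds, and by \ref{homogopen} (both halves) $\phi/\psi$ is open on homogeneous-tube modules of slope in $(r-\epsilon, r)$ and in $(r, r+\epsilon)$, so by \ref{vdotdim} the vector $v$ is nonzero on dimension vectors of modules of slopes on both sides of $r$. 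It remains to find an actual module of slope exactly $r$ on which $\phi/\psi$ is open; here I would use that $\mathcal{D}_r$ is generated by its indecomposable pure-injectives (so $M(r)$ is faithful for pp-pairs closed on all of $\mathcal{D}_r$), combine the free realisation $(M', m')$ of $\phi' \le \phi$ from \ref{ppashom} with \ref{ppashominf}, and observe that ${\rm dim}((\phi/\psi)(-))$ being given by $v \cdot \underline{\rm dim}(-)$ with $v \ne 0$ in the relevant directions, together with \ref{limit} expressing a slope-$r$ module $M$ as a directed union of finite-dimensional modules of slope in $(r-\epsilon, r)$ on which $\phi/\psi$ is (cofinally) open, forces $(\phi/\psi)(M) \ne 0$ — the directed-union/direct-limit argument is exactly the content of \ref{ppashominf} and \ref{ddplus}.

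The main obstacle I anticipate is the numerical heart of (i) $\Rightarrow$ (ii): showing that a linear functional $v$ on $K_0(A)$ which is nonzero on the dimension vectors of a slope-$\gamma$ tube (with $\gamma < r$) must also be nonzero on dimension vectors of some slope arbitrarily close to $r$ from the right. This requires knowing enough about the geometry of dimension vectors in tubular algebras — that the "slice" of $K_0(A) \otimes \mathbb{Q}$ at a given slope, modulo the radical ${\rm rad}(\chi_A) = \langle h_0, h_\infty\rangle$, moves continuously and non-degenerately with the slope — so that the vanishing locus of $v$, being a hyperplane, cannot contain a whole slope-$\gamma$ leaf and yet also contain leaves of all slopes in $(r, r+\delta)$. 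I would handle this using the index formula $\iota(\underline{\rm dim}(X)) = -\langle h_0, \underline{\rm dim}(X)\rangle / \langle h_\infty, \underline{\rm dim}(X)\rangle$ and the known description of dimension vectors within a single tube (reached from a quasisimple via almost split sequences, so $\underline{\rm dim}(E[k]) = k \cdot \underline{\rm dim}(E[\infty\text{-ish}])$ modulo bounded error — more precisely the dimension vectors in a tube of rank one are multiples of a single vector, and in general lie in a bounded-rank sublattice), reducing everything to linear algebra over $\mathbb{Q}$ with the two radical generators $h_0, h_\infty$ playing the role of coordinates.
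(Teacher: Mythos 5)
Your equivalence (i)$\Leftrightarrow$(ii) is essentially sound, and in fact slightly slicker than the paper's route: if $\phi/\psi$ were closed on every indecomposable just to the right of $r$, then applying \ref{vdotdim} to homogeneous modules of two distinct rational slopes there gives $v\cdot h_0=v\cdot h_\infty=0$, contradicting openness on a homogeneous module to the left of $r$ supplied by \ref{homogopen} (the paper instead goes through (iii), producing a sequence of slopes increasing to $r$ and using irrationality of $r$). The intermediate sentence about $v$ being ``transverse by continuity'' is not correct as stated, but your concrete two-slope argument rescues it. Likewise (iii)$\Rightarrow$(i), by passing to a finite-dimensional submodule of a slope-$r$ module containing the witness and lying in $(r-\epsilon,r)$ via \ref{limit}, is exactly the paper's argument, even if you only gesture at it.

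The genuine gap is the implication (i) (or (ii)) $\Rightarrow$ (iii): producing an actual module of slope $r$ on which $\phi/\psi$ is open. Your final step asserts that writing a slope-$r$ module $M$ as a directed union of finite-dimensional submodules in $(r-\epsilon,r)$ on which $\phi/\psi$ is (cofinally) open ``forces $(\phi/\psi)(M)\neq 0$''. That inference goes the wrong way: since pp functors commute with direct limits, openness on the union implies openness on cofinally many terms (that is (iii)$\Rightarrow$(i)), but not conversely --- an element of $\phi(X_i)\setminus\psi(X_i)$ may well lie in $\psi(X_j)$ for some larger $j$, so a pair can be open on every term of a directed union and closed on the union. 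Moreover \ref{limit} presupposes that you already have a module of slope $r$; neither the fact that $M(r)$ generates ${\mathcal D}_r$ nor the nonvanishing of $v$ on nearby dimension vectors produces one on which the pair stays open ($v\cdot\underline{\rm dim}$ means nothing for the infinite-dimensional $M$). This is precisely where the paper invokes the Compactness Theorem \ref{cpct}: the Hom and Ext conditions cutting out ${\mathcal D}_r$ are finitely consistent with the sentence $\exists x\,(\phi(x)\wedge\neg\psi(x))$, each finite subset being witnessed by a finite-dimensional module of slope sufficiently close to $r$ on which the pair is open, so some module of slope $r$ keeps the pair open. Some such compactness (or ultraproduct, or Ziegler-spectrum compactness) argument is needed and is missing from your proposal. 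A secondary slip: your claim that (iii)$\Rightarrow$(ii) is ``immediate'' because a slope-$r$ module is supported on both sides of $r$ is false --- such a module satisfies $({\mathcal Q}_r,M)=0$, so all its finite-dimensional submodules lie in $[0,r)$ --- though this does not damage your cycle, since (iii)$\Rightarrow$(i)$\Rightarrow$(ii) covers it.
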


\begin{proof}  If we have (i) then we can use a compactness argument very similar to that used earlier to get (iii).  Namely, given finitely many of the (Hom and Ext) conditions cutting out the subcategory ${\mathcal D}_r$, there is $\epsilon>0$ such that every indecomposable finite-dimensional module with slope in $(r-\epsilon, r)$ satisfies them.  By assumption there is such a module on which $\phi/\psi$ is open and that can be expressed by the sentence $\exists x \, (\phi(x) \wedge \neg \psi(x))$ (where $\wedge$ is read as ``and" and $\neg$ is read as ``not").  Thus the conditions cutting out ${\mathcal D}_r$ are finitely consistent with the condition that $\phi/\psi$ is open so, by the Compactness Theorem, there is a module which satisfies all these conditions, hence is of slope $r$, as required.  This argument also shows (ii)$\Rightarrow$(iii).

For the converse, suppose we have (iii), say  $M$ is a module of slope $r$ on which $\phi/\psi$ is open, say $a\in \phi(M)\setminus \psi(M)$.  Choose a finite-dimensional submodule $Y$ of $M$ containing $a$ and such that $a\in \phi(Y)$:  if $\phi(x)$ is the formula $\exists x_2,\dots,x_n \, H(x,x_2,\dots, x_n)^T=0$ where $H$ is a matrix with entries in $A$ then choose elements $a_2, \dots, a_n \in M$ such that $H(a,a_2,\dots,a_n)^T=0$ and let $Y$ be the submodule of $M$ generated by $a, a_2,\dots, a_n$.  Given $\epsilon>0$ there is, by \ref{limit}, a finite-dimensional submodule $X$ of $M$ which contains $Y$ and is in $(r-\epsilon, r)$.  Then certainly $a\in \phi(X)$ and, since $a\notin \psi(M)$ it must be that $a\notin \psi(X)$.  Therefore $\phi/\psi$ is open on $X$ and hence is open on some indecomposable summand of $X$, and we have proved (i).

Finally, assume (iii) and, in order to prove (ii), continue with notations and assumptions as in the previous paragraph.  By \ref{limit} there is a submodule $Z$ of $M$ into which $X$ embeds, such that every indecomposable summand of $Z$ has slope greater than the maximum slope of any indecomposable summand of $X$.  By the factorisation property the embedding $X\rightarrow Z$ factors through a module $X'$ whose indecomposable summands all lie in some homogeneous tube of slope between that of any direct summand of $X$ and $r$.  Note that if $b$ denotes the image in $X'$ of $a$ regarded as an element of $X$, then $b'\in \phi(X')\setminus \psi(X')$ (since $a\in \phi(M)\setminus \psi(M)$).  If the vector $v$ is chosen for $\phi/\psi$ as in \ref{vdotdim} then we have $v.\underline{\rm dim}(X')>0$.  By definition of slope, the dimension vector of $X'$ has the form $c(h_0+\gamma h_\infty)$ where $\gamma$ is the slope of $X'$ and $c$ is a positive rational.  Therefore $v.(h_0+\gamma h_\infty)>0$.  Repeating this whole argument (but noting that $v$ can be taken to be fixed), we produce an increasing sequence of rationals (the various $\gamma$) with limit $r$, each satisfying $v.h_0+\gamma v.h_\infty>0$.  Since $r$ is irrational it cannot be that $v.h_0+r v.h_\infty=0$, so $v.h_0+r v.h_\infty>0$ and hence there is a rational $\gamma'>r$, which we may take in the interval $(r,r+\epsilon'')$ for any $\epsilon''>0$, with $v.h_0+\gamma' v.h_\infty>0$.  By \ref{vdotdim}, $\phi/\psi$ is open on the homogeneous modules of slope $\gamma'$.  Thus, $\phi/\psi$ is open near the right of $r$.
\end{proof}

\section{Extending to non-homogeneous tubes}\label{sechomog}\marginpar{sechomog}

In this section it will be shown that if $ r $ is a positive irrational and if $ \phi /\psi  $ is a pp-pair which is open near the left of $ r $ then there is $ \epsilon >0 $ such that $ \phi /\psi  $ is open on every module in $ (r-\epsilon ,r)$.  In view of \ref{homogopen} it is the modules in non-homogeneous tubes which have yet to be dealt with.

The main fact that we need is \ref{ineqomega}; we will also need, for \ref{dimleap}, another dimension estimate, \ref{nravg}, which says that in a non-homogenous tube the dimensions of modules are not far from the average dimension of modules in that tube.  Both these results will be proved for certain tubular algebras and their consequences will, in Section \ref{secmove}, be transferred to the general case using tilting functors.

The particular algebras are those considered in Section 5.6 of \cite{RinTame}, namely the following.

The algebra $C(4,\lambda)$, where $\lambda\in K\setminus\{ 0,1\}$, is the path algebra of the quiver
$$\xymatrix{1 & & 4 \ar[dl]_{\alpha_{12}} \\
& 3 \ar[ul]^{\beta} \ar[dl]^{\gamma} & & 6 \ar[ul]_{\alpha_{11}} \ar[dl]^{\alpha_{21}} \\
2 & & 5 \ar[ul]^{\alpha_{22}} }$$
with relations $\beta(\alpha_{12}\alpha_{11} - \alpha_{22}\alpha_{21})=0$ and $\gamma(\alpha_{12}\alpha_{11} - \lambda \alpha_{22}\alpha_{21}) =0$.

The quadratic form for $C(4,\lambda)$ is
$$\chi(x_1,\dots,x_6) =$$ $$= \dfrac{1}{2}(x_1-x_2)^2 + \big(x_3-\dfrac{1}{2}(x_1+x_2+ x_3+x_4)\big)^2 + \big(x_6+ \dfrac{1}{2}(x_1+x_2 -x_4-x_5)\big)^2 +\dfrac{1}{2}(x_4-x_5)^2.$$  Also $h_0=(1,1,2,1,1,0)$, $h_\infty= (0,0,1,1,1,1)$ and $\langle h_0,h_\infty \rangle =2$.  The slope of a module with dimension vector $(x_1,\dots,x_6)$ is $\dfrac{x_4+x_5-x_1-x_2}{x_3-x_6}$.

The algebra $ C(6)$ is the path algebra of the quiver
$$\xymatrix{& & & 4 \ar[dl]_{\alpha_3} & 5 \ar[l]_{\alpha_2} \\
1 & 2 \ar[l]_{\gamma'} & 3 \ar[l]_{\gamma} & & & 8 \ar[ul]_{\alpha_1} \ar[dl]_{\beta_1} \\
& & & 6 \ar[ul]_{\beta_3} & 7 \ar[l]_{\beta_2}}$$
with relations $\gamma( \alpha_3\alpha_2\alpha_1 -\beta_3\beta_2\beta_1)=0$.

For the algebras $C(7)$ and $C(8)$ and for the further details about these algebras we refer to \cite[\S 4.2]{Har} and \cite[\S 5.6]{RinTame}.

{\bf From now on in this section}, we deal only with the algebras $C$ listed above.

\begin{lemma}\label{radchiC}\marginpar{radchiC} If $C$ is one of the algebras $ C(4,\lambda )$, $ C(6)$, $ C(7)$ or $ C(8)$ then ${\rm rad}(\chi_C) =\{ ah_0 +bh_\infty: a,b\in {\mathbb Z}\}$.
\end{lemma}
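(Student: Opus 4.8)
The inclusion $\{ah_0+bh_\infty:a,b\in{\mathbb Z}\}\subseteq{\rm rad}(\chi_C)$ is immediate, since $h_0$ and $h_\infty$ are radical vectors (\cite[\S 5.1]{RinTame}); all the content is in the reverse inclusion, and the plan is to turn that into a statement about the integer points of a rational subspace. Because $C$ is tubular, $\chi_C$ is positive semidefinite (\cite[Ch.~5]{RinTame}; for $C(4,\lambda)$ this is already visible from the displayed expression of $\chi_C$ as a sum of squares with positive coefficients), and its corank is $2$ --- indeed ${\rm rad}(\chi_C)$ already has rank $2$, since $h_0,h_\infty$ are linearly independent and generate a finite-index subgroup of it. Hence the radical of $\chi_C$ regarded as a quadratic form on ${\mathbb Q}^n$ is the $2$-dimensional subspace ${\mathbb Q}h_0+{\mathbb Q}h_\infty$, and the integral radical ${\rm rad}(\chi_C)=\{x\in{\mathbb Z}^n:\chi_C(x)=0\}$ equals $({\mathbb Q}h_0+{\mathbb Q}h_\infty)\cap{\mathbb Z}^n$. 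So it suffices to show: if $v\in{\mathbb Z}^n$ and $v=\alpha h_0+\beta h_\infty$ with $\alpha,\beta\in{\mathbb Q}$, then in fact $\alpha,\beta\in{\mathbb Z}$.

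To prove this I would, for each of the four algebras, exhibit two vertices $i,j$ at which the $2\times2$ integer matrix with rows $((h_0)_i,(h_0)_j)$ and $((h_\infty)_i,(h_\infty)_j)$ has determinant $\pm1$. Given such $i,j$, reading off coordinates $i$ and $j$ of the equation $v=\alpha h_0+\beta h_\infty$ produces a $2\times2$ linear system for $\alpha,\beta$ with this unimodular coefficient matrix and integer right-hand side $(v_i,v_j)$, so Cramer's rule forces $\alpha,\beta\in{\mathbb Z}$, as required. For $C(4,\lambda)$ one can take $i=1$, $j=6$: there $h_0=(1,1,2,1,1,0)$ and $h_\infty=(0,0,1,1,1,1)$ restrict to $(1,0)$ and $(0,1)$. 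For $C(6)$, $C(7)$ and $C(8)$ one reads off the vectors $h_0,h_\infty$ from the descriptions in \cite[\S 5.6]{RinTame} (see also \cite[\S 4.2]{Har}) and locates such a pair of coordinates in the same way --- typically a vertex at which $h_0$ vanishes while $h_\infty$ takes the value $1$, together with one at which the roles are reversed. (Equivalently, the index $[{\rm rad}(\chi_C):{\mathbb Z}h_0+{\mathbb Z}h_\infty]$ equals the greatest common divisor of the $2\times2$ minors of the matrix with rows $h_0,h_\infty$ --- the content one divides out to saturate a rank-$2$ sublattice of ${\mathbb Z}^n$ --- so producing a single minor equal to $\pm1$ already forces the index to be $1$.)

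The only real labour is this coordinatewise check for the four algebras, which is routine once their dimension vectors $h_0,h_\infty$ are recorded; for $C(4,\lambda)$ one may instead simply set the displayed sum of squares equal to zero and solve the resulting small linear system by hand, reading the integral radical off directly. The one point to be careful about --- and the place where the hypothesis that $C$ is one of these specific tubular algebras really enters --- is that it is genuinely the semidefiniteness (equivalently, corank exactly $2$) of $\chi_C$ that licenses replacing the quadric $\{\chi_C=0\}$ by the linear space ${\mathbb Q}h_0+{\mathbb Q}h_\infty$; without that one recovers only the already-known fact that ${\mathbb Z}h_0+{\mathbb Z}h_\infty$ has finite index in ${\rm rad}(\chi_C)$, which is weaker than the asserted equality.
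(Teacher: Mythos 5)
Your proposal is correct and is essentially the paper's own argument: both reduce to the fact that $h_0,h_\infty$ span ${\rm rad}(\chi_C)$ up to finite index, so any radical vector is a rational combination $q_1h_0+q_2h_\infty$, and then force $q_1,q_2\in{\mathbb Z}$ by projecting onto a pair of coordinates where the $2\times 2$ matrix formed by $h_0,h_\infty$ is unimodular. The only cosmetic difference is the choice of coordinates (the paper uses the last two, where the restrictions are $(1,0)$ and $(1,1)$, uniformly for all four algebras, while you use vertices $1$ and $6$ for $C(4,\lambda)$ and defer the analogous check for $C(6)$, $C(7)$, $C(8)$), and your appeal to positive semidefiniteness is not really needed once the finite-index fact is invoked.
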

\begin{proof} We know from the comments immediately after \ref{pdid1} that every element $ (x_1,\dots, x_n) $ of $ {\rm rad} (\chi) $ can be written in the form $ q_1h_0+q_2h_\infty  $ with $ q_1,q_2 $ rational. Notice for these particular algebras that the $ (n-1)$-th and $ n$-th coordinates of $ h_0 $ are 1 and 0 respectively, and that these coordinates for $ h_\infty  $ are both 1.  Projecting onto these coordinates we get $ q_1+q_2=x_{n-1} $ and $ q_2=x_n $ from which we see that $ q_1 $ and $q_2 $ are integers.
\end{proof}

\begin{lemma}\label{hlinear}\marginpar{hlinear} For all $ x\in {\rm K}_0(C) $ we have $ \chi_C(x\pm h_0) =\chi_C(x) =\chi_C(x\pm h_\infty )$.
\end{lemma}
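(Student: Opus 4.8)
The plan is to expand the quadratic form and reduce the claim to the statement that the symmetrised bilinear form $b(x,y) = \langle x,y\rangle + \langle y,x\rangle$ annihilates the radical vectors $h_0$ and $h_\infty$. Indeed, for any $x,h\in{\rm K}_0(C)$ one has $\chi_C(x\pm h) = \langle x\pm h,\, x\pm h\rangle = \chi_C(x) \pm b(x,h) + \chi_C(h)$, and since $h_0$ and $h_\infty$ are radical vectors we have $\chi_C(h_0) = \chi_C(h_\infty) = 0$. So the lemma is equivalent to the assertion that $b(x,h_0) = 0 = b(x,h_\infty)$ for every $x$.

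To prove this I would use that $\chi_C$ is positive semidefinite on ${\rm K}_0(C)$: for $C(4,\lambda)$ this is immediate from the displayed expression of $\chi$ as a sum of squares with nonnegative coefficients, and for $C(6)$, $C(7)$, $C(8)$ it is recorded in \cite[\S 5.6]{RinTame} (the Euler form of a tubular algebra is positive semidefinite). Fix $x$ and let $h$ be $h_0$ or $h_\infty$. For every integer $n$ we then have $0\le \chi_C(x+nh) = \chi_C(x) + n\,b(x,h) + n^2\chi_C(h) = \chi_C(x) + n\,b(x,h)$; were $b(x,h)$ nonzero this would be negative for $n$ of the appropriate sign and sufficiently large absolute value, a contradiction. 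Hence $b(x,h) = 0$ for all $x$.

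Substituting back into the first display gives $\chi_C(x\pm h_0) = \chi_C(x)$ and $\chi_C(x\pm h_\infty) = \chi_C(x)$, which is the lemma. The only nonroutine ingredient is the semidefiniteness of $\chi_C$, which is where the hypothesis that $C$ is one of the listed algebras really enters; but for $C(4,\lambda)$ it is visible in the formula above, and for the other three it is standard. An alternative, entirely elementary, route is simply to verify $b(h_0,e_i) = 0 = b(h_\infty,e_i)$ for each coordinate vector $e_i$ directly from the explicit quadratic forms and the stated values of $h_0$, $h_\infty$; in either case I expect no serious obstacle.
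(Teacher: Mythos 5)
Your proof is correct, but it runs along a slightly different track from the paper's. The paper's own argument stays entirely with the explicit sum-of-squares presentation of $\chi_C$ (displayed for $C(4,\lambda)$ and available in \cite[\S 5.6]{RinTame}, \cite[\S 4.2]{Har} for the others): since $\chi_C(h_0)=\chi_C(h_\infty)=0$ and $\chi_C$ is a sum of squares of linear forms $t$, each such $t$ must vanish at $h_0$ and $h_\infty$, whence $t(x\pm h_0)=t(x)$ term by term and the invariance of $\chi_C$ follows with no mention of the bilinear form. You instead polarise, reduce the lemma to $b(x,h_0)=0=b(x,h_\infty)$ for the symmetrised Euler form $b$, and obtain that from positive semidefiniteness via the growth argument $0\le \chi_C(x+nh)=\chi_C(x)+n\,b(x,h)$ for all $n\in\mathbb{Z}$ — which is sound, and worth noting is essentially the standard proof that for a positive semidefinite form the set $\{x:\chi(x)=0\}$ coincides with the radical of $b$ (a point that matters here because the paper defines ``radical vector'' only by $\chi_C(h)=0$). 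What each route buys: the paper's proof is marginally more self-contained given that the sum-of-squares expressions are exactly what is written down for these algebras, whereas your argument is more general and robust — it applies to any radical vector of any positive semidefinite unit form without needing the explicit presentation, at the cost of invoking semidefiniteness for $C(6)$, $C(7)$, $C(8)$, which is legitimately covered by your citation (and is in any case immediate from the same sum-of-squares presentations the paper uses).
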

\begin{proof} For each of these algebras $C$ the quadratic form $\chi_C$ is a sum of squares of linear terms $t$ each of which, when evaluated at $h_0$ or $h_\infty$, must therefore be $0$.  It follows that for each such term $t$ we have $t(x\pm h_0)=t(x)$ and so $\chi_C (x\pm h_0)=\chi_C(x)$ and similarly for $\chi_C(x\pm h_\infty)$.
\end{proof}

Define $\Omega =\{ x\in {\rm K}_0(C): \chi_C(x)=1 \mbox{ and } x_{n-1}=x_n =0\}$.

\begin{lemma}\label{bdomega}\marginpar{bdomega} Given one of the above algebras $ C$, there is a bound $ b $ such that if $ x\in \Omega $ then $ |x_i|\leq b$ for all $ i$.  In particular $\Omega$ is finite.
\end{lemma}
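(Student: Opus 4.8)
The plan is to play off two features of the setup against each other. On the one hand, $\chi_C$ is a positive linear combination of squares of linear forms — for $C(4,\lambda)$ this is the displayed formula, for $C(6),C(7),C(8)$ the analogous formulas from \cite[\S5.6]{RinTame}, and it is the presentation already used in \ref{hlinear} — so the equation $\chi_C(x)=1$ bounds each of those linear forms. On the other hand, the radical of $\chi_C$ over ${\mathbb R}$ is just the $2$-plane $\langle h_0,h_\infty\rangle_{\mathbb R}$: the subgroup generated by $h_0,h_\infty$ has finite index in ${\rm rad}(\chi_C)$ by the comments after \ref{pdid1} (in fact equals it, by \ref{radchiC}), and $h_0,h_\infty$ are linearly independent. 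So, writing $\chi_C=\sum_{j=1}^m c_j t_j^2$ with $c_j>0$ and the $t_j$ linear, the common kernel $\bigcap_{j}\ker t_j$ is precisely this radical, $\langle h_0,h_\infty\rangle_{\mathbb R}$, which is $2$-dimensional.

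The key step is then to show that the $t_j$ together with the two coordinate functionals $x\mapsto x_{n-1}$ and $x\mapsto x_n$ separate the points of ${\rm K}_0(C)\otimes{\mathbb R}$. I would introduce $\widetilde L\colon{\mathbb R}^n\to{\mathbb R}^{m}\times{\mathbb R}^2$, $x\mapsto\big((t_j(x))_j,\,x_{n-1},\,x_n\big)$, and check injectivity directly: if $\widetilde L(x)=0$ then $x\in\bigcap_j\ker t_j$, so $x=ah_0+bh_\infty$ for some $a,b\in{\mathbb R}$; but the $(n-1)$-th and $n$-th coordinates of $h_0$ are $1$ and $0$, and those of $h_\infty$ are both $1$ (for $C(4,\lambda)$, read this off from $h_0=(1,1,2,1,1,0)$, $h_\infty=(0,0,1,1,1,1)$; in general, this is the computation in the proof of \ref{radchiC}), whence $b=0$ and $a+b=0$, forcing $x=0$. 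An injective linear map of finite-dimensional real vector spaces is bounded below, so there is $c>0$ with $\|x\|\le c\,\|\widetilde L(x)\|$ for all $x\in{\mathbb R}^n$.

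What remains is the routine estimate: for $x\in\Omega$ we have $\sum_j c_j t_j(x)^2=\chi_C(x)=1$, so $|t_j(x)|\le c_j^{-1/2}$ for each $j$, while $x_{n-1}=x_n=0$ by definition of $\Omega$; hence $\widetilde L(x)$ lies in a fixed bounded subset of ${\mathbb R}^{m}\times{\mathbb R}^2$, and therefore $\|x\|$ is bounded by a constant $b$ independent of $x$. In particular $|x_i|\le b$ for all $i$, and since $\Omega$ then consists of integer vectors inside a bounded region it is finite.

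I expect the only genuinely delicate point to be the identification $\bigcap_j\ker t_j=\langle h_0,h_\infty\rangle_{\mathbb R}$ together with the fact that the two boundary coordinates pin down this plane — equivalently, that there is no nonzero integral radical vector of $\chi_C$ whose $(n-1)$-th and $n$-th coordinates both vanish. Were there such a vector $w$, then $x+{\mathbb Z}w\subseteq\Omega$ for any $x\in\Omega$, so $\Omega$ would be infinite and the statement would fail; this is exactly what \ref{radchiC} (and the coordinate computation in its proof) rules out for the four algebras in question. Granted that, the argument is identical for all four, one simply reads the $c_j$ and $t_j$ off the explicit quadratic form in each case.
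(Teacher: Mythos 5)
Your proof is correct, and it reaches the bound by a more uniform route than the paper. The paper's proof is a direct case-by-case verification (written out only for $C(4,\lambda)$): setting $x_{n-1}=x_n=0$, it bounds each square in the explicit sum-of-squares expression for $\chi_C$ by $1$ and then eliminates coordinates by hand to get the bound $b$. You start from the same decomposition and the same per-term bounds $|t_j(x)|\leq c_j^{-1/2}$, but replace the explicit elimination by the observation that the forms $t_j$ together with the two boundary coordinates give an injective, hence bounded-below, linear map -- injectivity coming from the identification of $\bigcap_j\ker t_j$ with $\langle h_0,h_\infty\rangle_{\mathbb R}$ plus the coordinate computation from the proof of \ref{radchiC}. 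This buys a single argument valid for all four algebras and isolates exactly which structural facts are used (corank $2$ and the last two coordinates of $h_0,h_\infty$), at the cost of one point you should make fully explicit: the equality $\bigcap_j\ker t_j=\langle h_0,h_\infty\rangle_{\mathbb R}$ does not follow formally from the finite-index statement about the \emph{integral} radical alone. It does follow either because the real radical is the kernel of the rational bilinear form associated to $\chi_C$, hence a rational subspace spanned by its lattice points, which \ref{radchiC} identifies; or, more simply, because in each of the explicit decompositions the number of squares is $n-2$, so the $t_j$ are necessarily independent and their common kernel has dimension exactly $2$, and it contains $h_0,h_\infty$ by \ref{hlinear}. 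With that inserted, the argument is complete, and the finiteness conclusion (integer vectors in a bounded region) is as in the paper.
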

\begin{proof} This can be checked for each type of algebra $ C$; we give the argument only for the case $ C=C(4,\lambda )$. In that case we have: $$\chi_C(x_1,\dots, x_6) = \dfrac{1}{2} (x_1-x_2)^2 + (x_3- \dfrac{1}{2} (x_1+x_2+x_4+x_5))^2 + \dfrac{1}{2} (x_4-x_5)^2 + (x_6 + \dfrac{1}{2} (x_1+x_2 -x_4-x_5))^2.$$
So, if $ \chi _C(x)=1$ and $ x_5=x_6=0 $, then: $$\dfrac{1}{2} (x_1-x_2)^2 \leq 1$$ $$(x_3- \dfrac{1}{2} (x_1+x_2+x_4))^2 \leq 1$$ $$\dfrac{1}{2} x_4^2\leq 1$$ $$ \dfrac{1}{2} (x_1+x_2 -x_4)^2\leq 1.$$ From these equations one quickly obtains a bound $b$.
\end{proof}

\begin{lemma}\label{bijomega}\marginpar{bijomega} Every $ x\in {\rm K}_0(C) $ with $\chi_C(x)=1$ may be written in the form $ ah_0+bh_\infty +y $ with $ a,b\in {\mathbb Z} $ and $ y\in \Omega$.
\end{lemma}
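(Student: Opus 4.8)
The plan is to combine the two structural facts already established: by \ref{bijomega}'s hypothesis-free companion we know that the integral span of $h_0,h_\infty$ together with $\Omega$ covers all vectors of norm $1$, but here we want a cleaner decomposition statement, so I would instead argue directly. First I would take $x\in {\rm K}_0(C)$ with $\chi_C(x)=1$ and look at its last two coordinates $x_{n-1},x_n$. The idea is to subtract off an appropriate integer combination of $h_0$ and $h_\infty$ to kill these two coordinates, landing in $\Omega$. Concretely, recall from the proof of \ref{radchiC} that for these algebras the $(n-1)$-th and $n$-th coordinates of $h_0$ are $1$ and $0$, and both are $1$ for $h_\infty$. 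So if I set $b=x_n$ and $a=x_{n-1}-x_n$, then $ah_0+bh_\infty$ has $(n-1)$-th coordinate $a+b=x_{n-1}$ and $n$-th coordinate $b=x_n$. Hence $y:=x-ah_0-bh_\infty$ has both of these coordinates equal to $0$, and $a,b\in{\mathbb Z}$.

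It remains to check that $y\in\Omega$, i.e.\ that $\chi_C(y)=1$. This is exactly where \ref{hlinear} does the work: $\chi_C(x\pm h_0)=\chi_C(x)=\chi_C(x\pm h_\infty)$, so by iterating (or rather, since $a$ and $b$ are integers, applying \ref{hlinear} $|a|$ times with $h_0$ and $|b|$ times with $h_\infty$) we get $\chi_C(y)=\chi_C(x-ah_0-bh_\infty)=\chi_C(x)=1$. Since also the $(n-1)$-th and $n$-th coordinates of $y$ vanish by construction, $y$ satisfies both defining conditions of $\Omega$, so $x=ah_0+bh_\infty+y$ is the desired decomposition.

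There is essentially no obstacle here; the statement is a bookkeeping consequence of the two preceding lemmas, and the only thing to be careful about is the sign/ordering convention in choosing $a$ and $b$ so that the last two coordinates cancel — this is forced once one writes down the $(n-1)$-th and $n$-th coordinates of $ah_0+bh_\infty$. (One should also note that $\Omega$ being finite, by \ref{bdomega}, is not needed for this lemma but will presumably be used downstream; the decomposition itself uses only \ref{hlinear} and the explicit form of the last two coordinates of $h_0,h_\infty$.)
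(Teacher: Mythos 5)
Your proof is correct and is essentially the paper's own argument: the paper also sets $y=x-(x_{n-1}-x_n)h_0-x_nh_\infty$ and invokes \ref{hlinear} (together with the explicit last two coordinates of $h_0,h_\infty$) to conclude $y\in\Omega$. You have merely spelled out the coordinate bookkeeping and the iteration of \ref{hlinear} that the paper leaves implicit.
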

\begin{proof} Take any $ x\in {\rm K}_0(C) $ with $ \chi (x)=1 $ and let $ y=x-(x_{n-1}-x_n)h_0 -x_nh_\infty $. By \ref{hlinear}, $ \chi (y)=1 $ so clearly $ y\in \Omega$.
\end{proof}

The next two lemmas can be seen most directly by considering slopes of lines joining the origin to points $(a,b)$ in the plane.

\begin{lemma}\label{ineq+}\marginpar{ineq+} Given $ r_1, r_2 \in {\mathbb R} $ such that $ 0<r_1<r_2 $ and any $ \gamma _1,\gamma _2\in {\mathbb Q}$, there are only finitely many pairs $ (a,b)\in {\mathbb N}^2 $ such that $$ \dfrac{b}{a} \leq  r_1<r_2 \leq \dfrac{b+\gamma _1}{a+\gamma _2}.$$
\end{lemma}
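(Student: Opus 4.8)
The plan is to extract two inequalities from the hypothesis and combine them. From $b/a \le r_1$ we get $b \le r_1 a$, and from $r_2 \le (b+\gamma_1)/(a+\gamma_2)$ we get $r_2(a+\gamma_2) \le b+\gamma_1$, i.e. $b \ge r_2 a + r_2\gamma_2 - \gamma_1$. Chaining these gives $r_2 a + r_2\gamma_2 - \gamma_1 \le b \le r_1 a$, hence $(r_2-r_1)a \le \gamma_1 - r_2\gamma_2$. Since $r_2 - r_1 > 0$ is a fixed positive real and $\gamma_1 - r_2\gamma_2$ is a fixed constant, this bounds $a$ from above: $a \le (\gamma_1 - r_2\gamma_2)/(r_2-r_1)$. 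As $a$ ranges over a finite set of natural numbers, and for each such $a$ the constraint $b \le r_1 a$ bounds $b$ from above as well (with $b \in {\mathbb N}$ bounded below by $0$), there are only finitely many admissible pairs $(a,b)$.

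I would also note the degenerate cases explicitly: if the upper bound on $a$ obtained above is negative then there are no such pairs at all, which is consistent with the claim; and the argument does not even need $a \neq 0$ to be assumed separately since $a \in {\mathbb N}$ and the bound handles $a = 0$ along with everything else (one should however be slightly careful if the convention is ${\mathbb N} = \{1,2,\dots\}$ versus ${\mathbb N}=\{0,1,2,\dots\}$, but in either case the set is finite). The only point requiring a moment's thought is that division by $r_2 - r_1$ is legitimate precisely because the hypothesis $r_1 < r_2$ is strict; if one only had $r_1 \le r_2$ the statement would fail, so this is where that hypothesis is used.

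There is no real obstacle here — the lemma is elementary and the single substantive step is the observation that strict inequality $r_1<r_2$ converts the sandwiched position of $b$ into a finite bound on $a$. I would present it in three or four lines.
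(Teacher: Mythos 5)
Your overall strategy is the same as the paper's: the strict gap $r_2-r_1$ forces an upper bound on $a$, and then $b\le r_1a$ bounds $b$ for each of the finitely many admissible $a$. However, one step can fail as written: passing from $r_2\le (b+\gamma_1)/(a+\gamma_2)$ to $r_2(a+\gamma_2)\le b+\gamma_1$ multiplies through by $a+\gamma_2$, which need not be positive, since $\gamma_2$ is an arbitrary rational. If $\gamma_2<0$ and $a<-\gamma_2$ the inequality reverses (and at $a=-\gamma_2$ the fraction is not even defined), so your chained inequality $(r_2-r_1)a\le \gamma_1-r_2\gamma_2$ is only established for those $(a,b)$ with $a+\gamma_2>0$. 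You take care to flag some degenerate cases ($a=0$, a negative upper bound on $a$), but not this one.

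The gap is easy to close: the pairs you have not covered satisfy $a\le -\gamma_2$, so $a$ ranges over a finite set there as well, and for each such $a$ the condition $b/a\le r_1$, i.e.\ $b\le r_1a$, leaves only finitely many $b$; hence these pairs contribute only finitely many elements and the conclusion stands. The paper's proof avoids the sign issue by writing $\frac{b+\gamma_1}{a+\gamma_2}=\frac{b}{a}+\frac{\gamma_1-(b/a)\gamma_2}{a+\gamma_2}$, bounding the numerator uniformly by $s=|\gamma_1|+r_2|\gamma_2|$, and choosing $a_0$ so large that for all $a\ge a_0$ one has $a+\gamma_2>0$ and $\frac{s}{a+\gamma_2}<r_2-r_1$, which excludes such $a$ from the set; all remaining $a<a_0$ (including those with $a+\gamma_2\le 0$) are then handled by the bound $b\le r_1a$, exactly as above. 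With that one case added, your three-or-four-line argument is complete and is essentially the paper's.
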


\begin{lemma}\label{ineq-}\marginpar{ineq-} Given $ r_1, r_2 \in {\mathbb R} $ such that $ 0<r_1<r_2 $ and any $ \gamma _1,\gamma _2\in {\mathbb Q}$, there are only finitely many pairs $ (a,b)\in {\mathbb N}^2 $ such that $$ 0< \dfrac{b+\gamma _1}{a+\gamma _2} \leq  r_1< r_2 \leq  \dfrac{b}{a}.$$
\end{lemma}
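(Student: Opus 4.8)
The plan is to follow the proof of \ref{ineq+} almost verbatim, with the roles of the two fractions interchanged; in fact the argument becomes slightly shorter, since the inequality on the \emph{perturbed} fraction now supplies exactly the bound that was automatic in \ref{ineq+}. Write
$$S=\{\,(a,b)\in{\mathbb N}^2:\ 0<\dfrac{b+\gamma_1}{a+\gamma_2}\leq r_1<r_2\leq\dfrac{b}{a}\,\}.$$
There are only finitely many $a\in{\mathbb N}$ for which a denominator degenerates, i.e. $a=0$ or $a+\gamma_2\le 0$; for each such $a$ the positivity hypothesis $0<\frac{b+\gamma_1}{a+\gamma_2}$ together with $\frac{b+\gamma_1}{a+\gamma_2}\le r_1$ confines $b$ to a bounded (possibly empty) set, so these values of $a$ contribute only finitely many pairs to $S$. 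Hence I may assume $a>0$ and $a+\gamma_2>0$.

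Next I would use that, under these assumptions, the inequality $\frac{b+\gamma_1}{a+\gamma_2}\le r_1$ rearranges (multiplying through by $a+\gamma_2>0$) to
$$b\ \le\ r_1 a+(r_1\gamma_2-\gamma_1),$$
an upper bound for $b$, while $\frac{b}{a}\ge r_2$ gives the lower bound $b\ge r_2 a$. Combining the two yields $(r_2-r_1)a\le r_1\gamma_2-\gamma_1$, and since $r_2-r_1>0$ this bounds $a$ from above, say by $a_0$. (If one prefers the shape of \ref{ineq+}, the same conclusion follows from the identity $\frac{b}{a}-\frac{b+\gamma_1}{a+\gamma_2}=\frac{(b/a)\gamma_2-\gamma_1}{a+\gamma_2}$ together with the fact that this difference must be at least $r_2-r_1$.) Thus no pair with $a\ge a_0$ lies in $S$.

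Finally, for each of the finitely many remaining values $a<a_0$, the inequality $b\le r_1 a+(r_1\gamma_2-\gamma_1)$ again bounds $b$ from above, so only finitely many $b$ can occur. Therefore $S$ is finite.

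I do not anticipate a genuine obstacle here. The only points needing care are the degenerate denominators ($a=0$ or $a+\gamma_2\le 0$) and the rôle of the hypothesis $0<\frac{b+\gamma_1}{a+\gamma_2}$: it is precisely this positivity (once $a+\gamma_2>0$ has been secured) that allows one to pass from the inequality between fractions to the linear inequality $b\le r_1 a+(r_1\gamma_2-\gamma_1)$ without reversing its direction, and this linear inequality is what provides the upper bound on $b$ that the hypotheses do not otherwise hand over directly.
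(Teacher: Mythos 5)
Your proof is correct, and it is a genuinely more direct route than the paper's. The paper works with the identity $\frac{b}{a}-\frac{b+\gamma_1}{a+\gamma_2}=\frac{(b/a)\gamma_2-\gamma_1}{a+\gamma_2}$: it first shows the numerator $(b/a)\gamma_2-\gamma_1$ is bounded above over the set $S$ (using, for $\gamma_2>0$, exactly your rearrangement $b\le -\gamma_1+r_1(a+\gamma_2)$), takes a supremum $k$, and concludes that for $a$ large the gap between the two fractions is smaller than $r_2-r_1$, so such $(a,b)$ cannot lie in $S$; the finitely many remaining $a$ are then handled as you do, splitting on the sign of $a+\gamma_2$. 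You instead clear denominators immediately and combine $b\le r_1a+(r_1\gamma_2-\gamma_1)$ with $b\ge r_2a$ to get $(r_2-r_1)a\le r_1\gamma_2-\gamma_1$, bounding $a$ outright; this buys a shorter argument with no supremum step and no asymptotic "pick $a_1$ large" phase, at no loss of generality, and your preliminary treatment of $a=0$ and $a+\gamma_2\le 0$ covers the same degenerate cases the paper deals with. One small correction to your closing remark: once $a+\gamma_2>0$ is secured, the passage from $\frac{b+\gamma_1}{a+\gamma_2}\le r_1$ to the linear bound on $b$ needs nothing more -- positivity of the fraction plays no role there; its real job is precisely the degenerate case $a+\gamma_2<0$, where it forces $b<-\gamma_1$, which is how your own argument (and the paper's) bounds $b$ in that case.
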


Recall that $\iota(x)$ denotes the index=slope of a dimension vector $x$.

\begin{cor}\label{ineqomega}\marginpar{ineqomega} Let $C$ be one of the listed algebras.  Take any positive real $ r>0 $ and any $ \epsilon$ with $ 0< \epsilon <r$. Then there exists $ \delta \in (0,\epsilon ) $ such that, for all $ a,b\in {\mathbb N} $ and any $ y\in \Omega$, $$\iota(ah_0+bh_\infty + y )\in  (r-\delta ,r+\delta ) \mbox{ implies } \iota(ah_0+bh_\infty )\in (r-\epsilon ,r+\epsilon ).$$
\end{cor}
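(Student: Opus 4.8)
The plan is to squeeze the two finiteness lemmas \ref{ineq+} and \ref{ineq-} through a reduction to a finite set of ``dangerous'' pairs $(a,b)$, and then to use the irrationality of $r$ to keep the index values at those pairs away from $r$. Two quick reductions come first. Since $\Omega$ is finite by \ref{bdomega}, it is enough to produce, for each $y\in\Omega$ separately, a $\delta_y\in(0,\epsilon)$ for which the displayed implication holds, and then set $\delta=\min\{\delta_y:y\in\Omega\}$; so fix $y\in\Omega$. Next I would record that the index is linear-fractional in $(a,b)$: since $h_0,h_\infty$ are radical vectors, $\langle h_0,h_0\rangle=\chi_C(h_0)=0=\chi_C(h_\infty)=\langle h_\infty,h_\infty\rangle$, while $c:=\langle h_0,h_\infty\rangle=-\langle h_\infty,h_0\rangle\ne0$, so substituting $x=ah_0+bh_\infty+y$ into $\iota(x)=-\langle h_0,x\rangle/\langle h_\infty,x\rangle$ gives $\iota(ah_0+bh_\infty+y)=(b+\gamma_1)/(a+\gamma_2)$ with $\gamma_1=\langle h_0,y\rangle/c$ and $\gamma_2=-\langle h_\infty,y\rangle/c$ fixed rationals (for $C(4,\lambda)$ one can instead just substitute into the explicit slope formula). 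Of course $\iota(ah_0+bh_\infty)=b/a$. So the task, for this $y$, is to separate the condition ``$b/a\notin(r-\epsilon,r+\epsilon)$'' from the condition ``$(b+\gamma_1)/(a+\gamma_2)$ lies near $r$''.

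The heart of the proof is the claim that the set $B_y$ of pairs $(a,b)\in{\mathbb N}^2$ with $b/a\notin(r-\epsilon,r+\epsilon)$ (reading $b/0=\infty$) and $(b+\gamma_1)/(a+\gamma_2)\in(r-\tfrac{\epsilon}{2},r+\tfrac{\epsilon}{2})$ is finite. To prove this I would split according to whether $b/a\le r-\epsilon$ or $b/a\ge r+\epsilon$. In the first case $a\ge1$ and $b/a\le r-\epsilon<r-\tfrac{\epsilon}{2}\le(b+\gamma_1)/(a+\gamma_2)$, with $0<r-\epsilon<r-\tfrac{\epsilon}{2}$ because $\epsilon<r$, so \ref{ineq+} applied with $r_1=r-\epsilon$, $r_2=r-\tfrac{\epsilon}{2}$ and these $\gamma_1,\gamma_2$ bounds the number of such pairs. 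In the second case, if $a\ge1$ then $0<(b+\gamma_1)/(a+\gamma_2)\le r+\tfrac{\epsilon}{2}<r+\epsilon\le b/a$ and \ref{ineq-} with $r_1=r+\tfrac{\epsilon}{2}$, $r_2=r+\epsilon$ finishes it; the remaining pairs have $a=0$, and then $(b+\gamma_1)/\gamma_2\in(r-\tfrac{\epsilon}{2},r+\tfrac{\epsilon}{2})$ (so $\gamma_2\ne0$, as otherwise $\iota(bh_\infty+y)$ is undefined and certainly not in the window), which pins $b$ to a bounded interval, hence again finitely many. Thus $B:=\bigcup_{y\in\Omega}B_y$ is finite.

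Finally I would choose $\delta$. For every $y\in\Omega$ and $(a,b)\in B_y$ the quantity $a+\gamma_2(y)$ is nonzero (otherwise $\iota(ah_0+bh_\infty+y)$ would be undefined, contradicting that it lies in the window), so $\iota(ah_0+bh_\infty+y)=(b+\gamma_1)/(a+\gamma_2)$ is a rational number and hence, $r$ being irrational, is different from $r$. Therefore $d:=\min\{\,|\,r-\iota(ah_0+bh_\infty+y)\,|:y\in\Omega,\ (a,b)\in B_y\,\}>0$, a minimum of finitely many positive reals (set $d=\epsilon$ if $B$ is empty). Put $\delta=\min(d,\tfrac{\epsilon}{2})$, which lies in $(0,\epsilon)$. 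If $a,b\in{\mathbb N}$, $y\in\Omega$, $\iota(ah_0+bh_\infty+y)\in(r-\delta,r+\delta)$ but $\iota(ah_0+bh_\infty)=b/a\notin(r-\epsilon,r+\epsilon)$, then, since $\delta\le\tfrac{\epsilon}{2}$, we have $\iota(ah_0+bh_\infty+y)\in(r-\tfrac{\epsilon}{2},r+\tfrac{\epsilon}{2})$, so $(a,b)\in B_y$ and hence $|\,r-\iota(ah_0+bh_\infty+y)\,|\ge d\ge\delta$, contradicting $\iota(ah_0+bh_\infty+y)\in(r-\delta,r+\delta)$. So $\iota(ah_0+bh_\infty)\in(r-\epsilon,r+\epsilon)$, as required.

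The analytic content sits entirely in \ref{ineq+} and \ref{ineq-}, so the remaining work is bookkeeping; the one point that genuinely matters is to isolate the finite ``dangerous set'' $B$ and then invoke the irrationality of $r$ to keep the finitely many offending index-values off $r$ (irrationality really is used here: without it one can have $\iota(ah_0+bh_\infty+y)=r$ exactly while $b/a$ is far from $r$). The only delicate spots are the degenerate cases $a=0$ and $\gamma_2(y)=0$, which must be checked by hand but cause no real trouble.
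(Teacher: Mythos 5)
Your argument is essentially the paper's own proof: the same reduction of $\iota(ah_0+bh_\infty+y)$ to the linear-fractional expression $(b+\gamma_1)/(a+\gamma_2)$ with $\gamma_1=\langle h_0,y\rangle/\langle h_0,h_\infty\rangle$ and $\gamma_2=-\langle h_\infty,y\rangle/\langle h_0,h_\infty\rangle$, the same two applications of \ref{ineq+} and \ref{ineq-} to show the set of offending pairs is finite (the paper works with an arbitrary $\epsilon'\in(0,\epsilon)$ where you take $\epsilon/2$, and quantifies over all $y\in\Omega$ at once rather than fixing $y$ and using finiteness of $\Omega$, via \ref{bdomega}, at the end --- immaterial differences, and your explicit treatment of the degenerate cases $a=0$, $a+\gamma_2=0$ is consistent with how those lemmas are stated), and the same conclusion by shrinking $\delta$ below the distance from $r$ to the finitely many offending index values. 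The one point of divergence is your last step: you justify that this distance is positive by the irrationality of $r$, whereas the corollary is stated for an arbitrary positive real $r$ (and is later invoked at rational $r$ in \ref{phipsiopen}), so strictly your write-up only establishes the irrational case. Note, however, that the paper's proof ends with ``consequently we can pick $\delta$'', which needs exactly the same separation --- that none of the finitely many values $(b+\gamma_1)/(a+\gamma_2)$ coming from offending pairs equals $r$ --- and gives no argument for it; so you have made explicit a hypothesis the paper leaves silent rather than introduced a new defect (your parenthetical claim that the statement can actually fail at rational $r$ would need a concrete $y\in\Omega$ realising such a coincidence, so treat it as a caution rather than a counterexample). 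For the main application \ref{allopenlh}, where $r$ is irrational, your version is exactly what is required.
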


\begin{proof} We have $\iota(ah_0+bh_\infty) =b/a$ and  $$\iota(ah_0+bh_\infty + y ) = -\dfrac{\langle h_0,ah_0+bh_\infty +y\rangle }{\langle h_\infty ,ah_0+bh_\infty +y\rangle } = - \dfrac{\langle h_0,bh_\infty \rangle +\langle h_0,y\rangle }{\langle h_\infty ,ah_0\rangle +\langle h_\infty ,y\rangle }  = \dfrac{b+(\langle h_0,y\rangle /\langle h_0,h_\infty \rangle )}{a-(\langle h_\infty ,y\rangle /\langle h_0,h_\infty \rangle )}.$$

Set $ \gamma _1=\langle h_0,y\rangle /\langle h_0,h_\infty \rangle  $ and $ \gamma _2=-\langle h_\infty ,y\rangle /\langle h_0,h_\infty \rangle  $ and pick any $ \epsilon '\in (0,\epsilon )$. Then, by \ref{ineq-} and \ref{bdomega}, there are only finitely many $ (a,b)\in {\mathbb N}^2 $ such that for some $ y\in \Omega $ $$r-\epsilon' < \dfrac{b+\gamma _1}{a+\gamma _2} <  r+\epsilon ' <r+\epsilon  \leq \dfrac{b}{a}.$$ Similarly, by \ref{ineq+}, there are only finitely many $ (a,b)\in {\mathbb N}^2 $ and $ y\in \Omega $ such that: $$\dfrac{b}{a} \leq r-\epsilon <r-\epsilon ' \leq  \dfrac{b+\gamma _1}{a+\gamma _2}.$$  Thus there are only finitely many $(a,b)\in {\mathbb N}^2$ such that $\dfrac{b+\gamma_1}{a+\gamma_2} \in (r-\epsilon', r+\epsilon')$ but $\dfrac{b}{a} \notin (r-\epsilon , r+\epsilon)$ for some $y\in \Omega$.  Consequently we can pick $ \delta \in (0,\epsilon ') $ such that, for all $ (a,b)\in {\mathbb N} $ and $ y\in \Omega$, $$\iota(ah_0+bh_\infty + y )\in  (r-\delta ,r+\delta ) \mbox{ implies } \iota(ah_0+bh_\infty )\in (r-\epsilon ,r+\epsilon ).$$
\end{proof}

\begin{cor}\label{allopenlh}\marginpar{allopenlh} Let $C$ be one of the listed algebras.  Let $ \phi /\psi  $ be any pp-pair and let $ r $ be a positive irrational. If $\phi/\psi$ is open at $r$ then there exists $ \epsilon >0 $ such that $ \phi /\psi  $ is open on every module in $ (r-\epsilon ,r+\epsilon)$.
\end{cor}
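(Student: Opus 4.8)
The plan is to push everything through the numerical reformulation provided by \ref{vdotdim}: for a finite-dimensional indecomposable $X$ of slope close to $r$, ``$\phi/\psi$ open on $X$'' just means ``$v\cdot\underline{\rm dim}(X)>0$'' for the fixed vector $v$, so it suffices to find $\epsilon>0$ with $v\cdot\underline{\rm dim}(X)>0$ for every such $X$ of slope in $(r-\epsilon,r+\epsilon)$ (this does it, since pp-definable subgroups respect finite direct sums). The input I start from is \ref{homogopen}, which together with \ref{cofinrtl} already handles modules lying in homogeneous tubes near $r$; the work is to propagate this to the non-homogeneous tubes, and that is where \ref{ineqomega}, \ref{bijomega} and \ref{bdomega} enter.

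First I would combine \ref{cofinrtl}, \ref{homogopen} and \ref{vdotdim} to fix $\epsilon_0>0$ and $v\in{\rm K}_0(C)$ such that ${\rm dim}((\phi/\psi)(X))=v\cdot\underline{\rm dim}(X)$ for all $X$ of slope in $(r-\epsilon_0,r+\epsilon_0)$ and such that $\phi/\psi$ is open on every homogeneous-tube module of slope in $(r-\epsilon_0,r)\cup(r,r+\epsilon_0)$. For rational $\gamma$ in this punctured interval, pick a homogeneous tube in ${\mathcal T}_\gamma$ (possible as $k$ is infinite); its quasisimple $H$ is a brick with $\tau H\cong H$ and, by \ref{pdid1}, projective dimension $\leq1$, so by the Auslander--Reiten formula \ref{AusReit}, $\chi_C(\underline{\rm dim}(H))={\rm dim}\,{\rm End}(H)-{\rm dim}\,{\rm Hom}(H,\tau H)=0$; thus $\underline{\rm dim}(H)\in{\rm rad}(\chi_C)$, and by \ref{radchiC}, $\underline{\rm dim}(H)=ah_0+bh_\infty$ with $a\geq1$ and $b/a=\gamma$, whence $0<v\cdot\underline{\rm dim}(H)=a(v\cdot h_0+\gamma\,v\cdot h_\infty)$. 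Writing $f(\gamma):=v\cdot h_0+\gamma\,(v\cdot h_\infty)$, an affine function of $\gamma$, we get $f>0$ at every rational of $(r-\epsilon_0,r+\epsilon_0)$, hence $f>0$ throughout that interval; I then pick $\epsilon_1\in(0,\epsilon_0)$ and $c_0>0$ with $f\geq c_0$ on $[r-\epsilon_1,r+\epsilon_1]$, and set $\mu=\max\{|v\cdot y|:y\in\Omega\}$, finite by \ref{bdomega}.

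Next I apply \ref{ineqomega} with $\epsilon_1$ in the role of $\epsilon$, obtaining $\delta\in(0,\epsilon_1)$ such that $\iota(ah_0+bh_\infty+y)\in(r-\delta,r+\delta)$ implies $b/a\in(r-\epsilon_1,r+\epsilon_1)$ for all $a,b\in{\mathbb N}$, $y\in\Omega$. Any finite-dimensional indecomposable of finite positive slope lies in a tube, so $\chi_C$ of its dimension vector is $0$ or $1$ (it is a root of the positive semidefinite form $\chi_C$), and \ref{radchiC} or \ref{bijomega} accordingly writes that vector as $ah_0+bh_\infty+y$ with $a,b\in{\mathbb N}$, $y\in\Omega\cup\{0\}$. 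Call such a vector \emph{exceptional} if its index lies in $(r-\delta,r+\delta)$ and $a\leq\mu/c_0$; an elementary estimate (the index being near $r$ and $a$ bounded forces $b$ bounded too) shows there are only finitely many exceptional vectors, and each has index a rational different from $r$. Let $\rho_0>0$ be the least distance from $r$ to the index of an exceptional vector (or $\rho_0=\delta$ if there are none), and set $\epsilon=\min(\delta,\rho_0)$.

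Finally, let $X$ be a finite-dimensional indecomposable of slope $\gamma\in(r-\epsilon,r+\epsilon)$. If $\underline{\rm dim}(X)\in{\rm rad}(\chi_C)$, write $\underline{\rm dim}(X)=ah_0+bh_\infty$; then $a\geq1$, $b/a=\gamma\in[r-\epsilon_1,r+\epsilon_1]$, and $v\cdot\underline{\rm dim}(X)=a\,f(\gamma)\geq c_0>0$. Otherwise $\underline{\rm dim}(X)=ah_0+bh_\infty+y$ with $y\in\Omega$; since $|\gamma-r|<\epsilon\leq\rho_0$ this vector is not exceptional, while its index $\gamma$ lies in $(r-\delta,r+\delta)$, so necessarily $a>\mu/c_0$; then $b/a\in(r-\epsilon_1,r+\epsilon_1)$ by the choice of $\delta$, so $f(b/a)\geq c_0$ and $v\cdot\underline{\rm dim}(X)=a\,f(b/a)+v\cdot y\geq a\,c_0-\mu>0$. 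Either way $\phi/\psi$ is open on $X$, which is what we need. The step I expect to be the real obstacle is exactly this last, non-homogeneous case: there $v\cdot\underline{\rm dim}(X)$ is not a multiple of $f(\gamma)$ but carries the bounded ``transverse'' correction $v\cdot y$, and the whole role of \ref{ineqomega} (with \ref{bdomega} and \ref{bijomega}) is to guarantee that this correction is dominated by $a\,f(b/a)$ outside a finite set of slopes, which one then simply avoids by shrinking $\epsilon$.
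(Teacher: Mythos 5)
Your proof is correct and takes essentially the same route as the paper's: positivity of $v\cdot h_0+\gamma\, v\cdot h_\infty$ on an interval around $r$ via \ref{vdotdim} and \ref{homogopen} (tested on homogeneous tubes), then \ref{ineqomega} together with a finiteness/avoidance argument to dominate the bounded correction $v\cdot y$ coming from inhomogeneous tubes. The only differences are cosmetic: you get the uniform lower bound $c_0$ by compactness where the paper uses irrationality of the infimum $s$ (taking $\epsilon$ rational), and your ``exceptional vector'' bookkeeping with $\rho_0$ replaces the paper's choice of $a'$ and $\delta'$, while also treating both sides of $r$ symmetrically, which the paper leaves implicit.
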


\begin{proof}  By \ref{homogopen} there is $ \epsilon '>0 $ such that $ \phi /\psi  $ is open on every indecomposable module in $ (r-\epsilon ',r+\epsilon') $ which is in a homogeneous tube.  By \ref{vdotdim} there is $ \epsilon >0 $ and $ v\in K_0(C) $ such that $ {\rm dim}(\phi /\psi) (X)=v\cdot \underline{\rm dim}(X) $ for all $ X $ in $ (r-\epsilon ,r+\epsilon)$; we may assume that $ \epsilon \leq \epsilon ' $ and that $ \epsilon \in {\mathbb Q}$.

We claim that $ v\cdot h_0+\gamma v\cdot h_\infty >0 $ for all $ \gamma \in (r-\epsilon ,r+\epsilon ) \cap {\mathbb Q}$; to see this, take positive $ c\in {\mathbb N} $ large enough such that $ c\gamma \in {\mathbb N}$ and such that (from the definition of slope) there is an indecomposable module $ X $ in a homogeneous tube with $ \underline{\rm dim}(X)=ch_0+c\gamma h_\infty $. Then $ X $, having slope $\gamma$, is in $ (r-\epsilon ,r+\epsilon ) $ and so $ \phi /\psi  $ is open on $ X$, hence $ c(v\cdot h_0+\gamma v\cdot h_\infty )>0$, so $v\cdot h_0+\gamma v\cdot h_\infty >0$ as required.

Now, let $ s={\rm min}\{ v\cdot h_0+(r-\epsilon)v\cdot h_\infty , v\cdot h_0+(r+\epsilon )v\cdot h_\infty \}$.  Notice that $ s\in {\mathbb R}\setminus {\mathbb Q} $, since $ r\pm \epsilon \in {\mathbb R}\setminus {\mathbb Q}$ (the case $v\cdot h_\infty=0$ would also give our desired conclusion, $s>0$), and that $ s={\rm inf}\{ v\cdot h_0 + \gamma v\cdot h_\infty : \gamma \in (r-\epsilon ,r+\epsilon )\}$. Thus $ s>0$.

By \ref{ineqomega}, there exists $ \delta \in (0,\epsilon ) $ such that, for all $ a,b\in {\mathbb N} $ and $ y\in \Omega$, $$\iota(ah_0+bh_\infty +y)\in (r-\delta ,r+\delta ) \mbox{ implies } \iota(ah_0+bh_\infty )\in (r-\epsilon ,r+\epsilon ).$$

Now, pick any $ a'\in {\mathbb N} $ such that $ a'>-(v\cdot y)/s $ for all $ y\in \Omega$.  By choice of $\delta$, for each value of $a$ there are only finitely many values of $b$ such that $\iota(ah_0+bh_\infty + y )$ is in $(r-\delta, r+\delta)$ for some $y\in \Omega$; so we can pick $ \delta '>0 $ with $\delta'<\delta$ small enough so that $ \iota(ah_0+bh_\infty +y)\notin (r-\delta ',r) $ for all $ a\leq a'$, $ b\in {\mathbb N} $ and $ y\in \Omega$.  We claim that $ \phi /\psi  $ is open on every $ X $ in $ (r-\delta ',r)$. Indeed, given any such $ X$, let $ a,b\in {\mathbb N} $ and $ y\in \Omega $ be such that $ \underline{\rm dim}(X)=ah_0+bh_\infty +y$. Then $ a> a' $ (by our choice of $ \delta '$) and $ b/a\in (r-\epsilon ,r+\epsilon ) $ (by our choice of $ \delta $) and so: $${\rm dim}\,\phi (X) -{\rm dim}\,\psi (X) = v\cdot (ah_0+bh_\infty +y) = av\cdot (h_0+(b/a)h_\infty ) + v\cdot y \geq a's + v\cdot y >0.$$ So $ \phi /\psi  $ is open on $ X$,  as required. Relabelling $ \delta ' $ as $ \epsilon$  completes the proof.
\end{proof}

\begin{cor}\label{allopenl1}\marginpar{allopenl1} Let $ C $ be of the form $ C(4,\lambda )$, $ C(6)$, $ C(7)$ or $ C(8)$. Let $ \phi /\psi  $ be any pp-pair and let $ r $ be a positive irrational. Then the following are equivalent:

\noindent (i) $\phi /\psi $ is open at $ r$;

\noindent (ii) there exists $ \epsilon >0 $ such that $ \phi /\psi  $ is open on every module in $ (r-\epsilon ,r+\epsilon)$.
\end{cor}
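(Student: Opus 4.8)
The plan is to obtain this corollary essentially for free by assembling the two main results already in hand, so there is almost nothing new to do. The implication (i)$\Rightarrow$(ii) requires no further argument at all: it is precisely the content of \ref{allopenlh}, which applies here since $C$ is one of the listed algebras $C(4,\lambda)$, $C(6)$, $C(7)$, $C(8)$. Thus the only point to address is (ii)$\Rightarrow$(i).

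For that, I would proceed as follows. Assume $\phi/\psi$ is open on every module in some interval $(r-\epsilon_0, r+\epsilon_0)$. First I would note that, since pp-solution sets commute with finite direct sums, ``open on every module in an interval'' is equivalent to ``open on every indecomposable module in that interval''; so the hypothesis supplies, for every $\epsilon$ with $0<\epsilon\leq\epsilon_0$, an indecomposable module of slope in $(r-\epsilon, r)$ on which $\phi/\psi$ is open — for instance any finite-dimensional indecomposable of rational slope lying in that subinterval, or a module in a homogeneous tube of such slope. Since $\epsilon$ was arbitrary, this is exactly the statement that $\phi/\psi$ is open near the left of $r$, in the sense of the definition given just before \ref{homogopen}. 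Then the equivalence (i)$\Leftrightarrow$(iii) of \ref{cofinrtl} yields that $\phi/\psi$ is open at $r$, which is statement (i) of the present corollary; and this closes the loop.

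There is no genuine obstacle in this argument: all of the real work has already been carried out in \ref{allopenlh} — whose proof, via \ref{vdotdim} and \ref{ineqomega}, is where the specific algebras $C$ are needed — and in \ref{cofinrtl}, where the compactness and slope (irrationality) arguments reside. The only thing that needs a moment's care is the purely formal translation between the three notions of openness (open at $r$, open near the left of $r$, open on a full two-sided interval), and that is handled by the remark about direct sums together with \ref{cofinrtl}. In short, the corollary is the statement that, for these algebras, all three notions coincide, and the proof is just the bookkeeping that records this.
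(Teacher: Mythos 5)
Your proposal is correct and follows essentially the same route as the paper, whose proof is simply the observation that the corollary follows from \ref{cofinrtl} together with \ref{allopenlh}: (i)$\Rightarrow$(ii) is \ref{allopenlh}, and (ii)$\Rightarrow$(i) is the bookkeeping that (ii) gives openness near the left of $r$ (since every subinterval contains indecomposables of rational slope), whence \ref{cofinrtl} applies.
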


\begin{proof} This now follows from \ref{cofinrtl} and \ref{allopenlh}.
\end{proof}

This will be extended to all tubular algebras at \ref{allopenrtl}.

\section{Dimension estimates for $ C$-modules in inhomogeneous tubes} \label{secinhomog}\marginpar{secinhomog}

Throughout this section the algebra $ C $ will continue to be of one of the types $ C(4,\lambda )$, $ C(6)$, $ C(7)$, $ C(8)$.

Define $ \mu :{\rm K}_0(A) \rightarrow {\mathbb N} $ to be the linear map such that $ \mu (\underline{\rm dim}(M)) ={\rm dim}(M)$.  Define an ordering on the set ${\rm rad}^+(\chi) =\{ ah_0+bh_\infty: a,b>0, (a,b)\neq (0,0)\}$ by setting $x<y$ iff $\iota(x)<\iota(y)$ or ($\iota(x)=\iota(y)$ and $\mu(x)<\mu(y)$).  Note that this is a total order.

The next statements can easily be checked.

\begin{lemma}\label{iotaest}\marginpar{iotaest} Take any $ x,y\in {\rm rad}^+(\chi_C ) $ such that $ \iota(x)<\iota(y)$. Then: $$(a) \mbox{ } \iota(x)<\iota(x+y) <\iota(y)$$ $$(b) \mbox{ } {\rm lim}_{n\rightarrow \infty } \, \iota(x+ny) =\iota(y).$$
\end{lemma}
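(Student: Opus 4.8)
The plan is to compute the index $\iota$ directly from its definition as a ratio of values of the bilinear form, exploiting bilinearity to reduce everything to the two scalars $\langle h_0, h_\infty\rangle$ and $\langle h_\infty, h_0\rangle = -\langle h_0,h_\infty\rangle$. Write $x = a_1 h_0 + b_1 h_\infty$ and $y = a_2 h_0 + b_2 h_\infty$ with $a_i, b_i > 0$ (we are in ${\rm rad}^+(\chi_C)$), and recall that $h_0, h_\infty$ are radical vectors so $\langle h_0, h_0\rangle = \langle h_\infty, h_\infty\rangle = 0$. Set $d = \langle h_0, h_\infty\rangle$ (nonzero, and in fact positive for the listed algebras, e.g.\ $d=2$ for $C(4,\lambda)$). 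Then $\langle h_0, x\rangle = b_1 d$ and $\langle h_\infty, x\rangle = -a_1 d$, so $\iota(x) = -\,b_1 d / (-a_1 d) = b_1/a_1$, and similarly $\iota(y) = b_2/a_2$. So the hypothesis $\iota(x) < \iota(y)$ is just $b_1/a_1 < b_2/a_2$.

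\emph{Part (a).} Since $x+y = (a_1+a_2)h_0 + (b_1+b_2)h_\infty$ is again in ${\rm rad}^+(\chi_C)$, the same computation gives $\iota(x+y) = (b_1+b_2)/(a_1+a_2)$, the mediant of $b_1/a_1$ and $b_2/a_2$. The mediant inequality $b_1/a_1 < (b_1+b_2)/(a_1+a_2) < b_2/a_2$ is elementary: from $b_1 a_2 < b_2 a_1$ one gets $b_1(a_1+a_2) = b_1 a_1 + b_1 a_2 < b_1 a_1 + b_2 a_1 = (b_1+b_2)a_1$, giving the left inequality, and symmetrically for the right. This proves (a).

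\emph{Part (b).} For $n \geq 1$, $x + ny = (a_1 + n a_2)h_0 + (b_1 + n b_2)h_\infty$ lies in ${\rm rad}^+(\chi_C)$, so $\iota(x+ny) = (b_1 + n b_2)/(a_1 + n a_2)$. Dividing numerator and denominator by $n$ and letting $n \to \infty$ gives $(b_1/n + b_2)/(a_1/n + a_2) \to b_2/a_2 = \iota(y)$. This proves (b).

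I do not expect any real obstacle here; the statement is flagged in the paper as easily checked. The only points requiring a moment's care are: confirming that $h_0, h_\infty$ span ${\rm rad}^+(\chi_C)$ so that $x, y, x+y, x+ny$ can all be written in the stated coordinates with positive entries (this is exactly \ref{radchiC} together with the definition of ${\rm rad}^+$), and that $\langle h_0, h_\infty\rangle \neq 0$ so the ratio $\iota$ is well-defined on these vectors — both of which are recorded earlier in the excerpt. Everything else is the mediant inequality and a one-line limit computation.
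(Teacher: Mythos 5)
Your proof is correct: the paper offers no argument for this lemma (it is dismissed with ``The next statements can easily be checked''), and your computation $\iota(ah_0+bh_\infty)=b/a$ followed by the mediant inequality and the obvious limit is precisely the intended routine verification (the same formula $\iota(ah_0+bh_\infty)=b/a$ is used explicitly in the proof of \ref{ineqomega}). The only pedantic caveat is that elements of ${\rm rad}^+(\chi_C)$ may have a zero coefficient (slope $0$ or $\infty$), but the mediant argument goes through verbatim in those boundary cases.
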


\begin{prop}\label{jumpnrr}\marginpar{jumpnrr} Let $ C $ be one of the specified algebras. Given any positive irrational $ r$, any $ k\in {\mathbb N} $ and any $ \epsilon >0$, there exists $ x\in {\rm rad}^+(\chi _C) $ such that $ r-\epsilon <\iota(x)<r $ and such that, for all $ y\in {\rm rad}^+(\chi _C)$, $$\iota(x)<\iota(y)<r \Rightarrow \mu (y)>\mu (x)+k.$$
\end{prop}
\begin{proof} First of all, given any $ k'\geq 1$, consider the set: $$\{ x\in {\rm rad}^+(\chi ): \iota(x)<r \mbox{ and } \mu (x)\leq k'\}.$$ There exists $ k'\geq k $ such that this set is nonempty; fix such a $k'$. Since the set is finite we can choose $ x_0=a_0h_0+b_0h_\infty  $ in this set which is maximal in the order on $ {\rm rad}^+(\chi )$. Then for all $ y\in {\rm rad}^+(\chi )$: $$ \iota(x_0)<\iota(y)<r \Rightarrow \mu (y)>\mu (x_0).$$

Suppose, for a contradiction, that for all $ x\in {\rm rad}^+(\chi ) $ with $ r-\epsilon <\iota(x)<r $ there exists $ y\in {\rm rad}^+(\chi ) $ with $ \iota(x)<\iota(y)<r $ and $ \mu (y)\leq \mu (x)+k$. Then we can recursively define non-empty finite sets $ S_1, S_2, \dots $ and elements $ x_i=a_ih_0+b_ih_\infty  \in S_i $ by: $$S_{i+1}=\{ y\in {\rm rad}^+(\chi ): \iota (x_i)<\iota (y)< r \mbox{ and } \mu (y)\leq \mu (x_i)+k\}$$ $$x_{i+1}={\rm max}(S_{i+1}).$$ Define $ c_i=a_i-a_{i-1} $ and $ d_i=b_i-b_{i-1} $ for all $ i\geq 1$. So $ x_i-x_{i-1}=c_ih_0+d_ih_\infty $. Notice that for all $ i$:

$\bullet$ $ 0 < \mu (c_ih_0+d_ih_\infty )\leq k $ (since, by induction, $\mu(y)>\mu(x_i)$ for all $y\in S_{i+1}$):

$\bullet$ $ c_i $ and $ d_i $ can't both be negative (since $ 0\leq \mu (c_ih_0+d_ih_\infty )$);

$\bullet$ $ d_i\geq 0 $ - suppose for a contradiction that $ d_i<0$; then $ c_i\geq 0 $ (by above) and so $$\iota(x_i) =b_i/a_i =(d_i+b_{i-1})/(c_i+a_{i-1}) \leq b_{i-1}/a_{i-1} =\iota (x_{i-1})$$ - contradicting the definition of $ S_i$;

$\bullet$ $ c_i\geq 0 $ - suppose for a contradiction that $ c_i<0$; then $$\iota (x_{i-1}) = b_{i-1}/a_{i-1} <b_{i-1}/(a_{i-1}-1) \leq  (b_{i-1}+d_i)/(a_{i-1}+c_i) = \iota(x_i) <r$$ (note that we cannot have $a_{i-1}=1$ if $c_i<0$) and so $ (a_{i-1}-1)h_0+b_{i-1}h_\infty \in S_{i-1} $ - contradicting our choice of $ x_{i-1}$;

$\bullet$ $ d_i/c_i>\iota(x_{i-1}) $ - since $ d_i/c_i\leq \iota(x_{i-1}) $ would imply that: $$b_i/a_i = (b_{i-1}+d_i)/(a_{i-1}+c_i) \leq b_{i-1}/a_{i-1}$$ (by \ref{iotaest}(a)), which contradicts the fact that $ x_i \in S_i$.

\noindent Therefore $d_i/c_i > \iota(x_0)$ and so it follows from our choice of $ x_0$, that $ d_i/c_i>r$.

Therefore each $c_ih_0+d_ih_\infty$ belongs to the finite set: $$U= \{ y\in {\rm rad}^+(\chi ): \iota(y)> r \mbox{ and } \mu (y)\leq k\}$$ and define, for all $ n$: $$U_n=\{ \sum_{i=1}^n y_i: y_i\in U \mbox{ for all } i\leq n\}.$$ By construction,  $x_i\in \{x_0+z: z\in U_i\} $ for all $ i\geq 1$. We claim that there exists $ n $ such that: $$\iota(x_0+z)>r \mbox{ for all } z\in U_n$$ - this will give our required contradiction.

To prove this, let $ z_0\in U $ be minimal in this set with respect to the ordering on ${\rm rad}^+(\chi )$. Let $ e_0, f_0\in {\mathbb N} $ be such that $ e_0h_0+f_0h_\infty =z_0$; note that $f_0 >0$.  By repeated application of \ref{iotaest}(a), $\iota(z_0)\leq \iota(z)$ for all $z\in U_n$.

By \ref{iotaest}(b) there is $ N $ such that $ \iota(x_0+Nz_0)>r$. Take any $ z=eh_0+fh_\infty \in U_{Nf_0}$. Then $ f\geq Nf_0$. Let $ q=f/(Nf_0)\geq 1$. Notice that: $$r<\iota(x_0+Nz_0)= \dfrac{b_0+Nf_0}{a_0+Ne_0} \leq  \dfrac{b_0+qNf_0}{a_0+qNe_0}$$ (since $ (b_0+Nf_0)/(a_0+Ne_0)\leq f_0/e_0 = (qN-N)f_0/(qN-N)e_0$ by \ref{iotaest}(a), then apply \ref{iotaest}(a) again). Also: $$ \dfrac{f_0}{e_0} \leq \dfrac{f}{e} =\dfrac{qNf_0}{e}.$$ So $ e\leq qNe_0 $ and so: $$r< \iota(x_0+Nz_0) \leq \dfrac{b_0+qNf_0}{a_0+qNe_0} \leq \dfrac{b_0+f}{a_0+e} = \iota(x_0+z)$$ - so $ \iota(x_0+z)>r $ for all $ z\in U_{Nf_0} $ - thus proving the claim and hence establishing the contradiction.
\end{proof}

If ${\mathcal T}(\rho)$ is a tube then we use $n_\rho$ to denote the number of quasisimples in ${\mathcal T}(\rho)$.

\begin{lemma}\label{nravg}\marginpar{nravg} Let $ C $ be one of the specified algebras and let $ {\mathcal T}(\rho ) $ be a nonhomogeneous tube in $ {\mathcal T}_\gamma  $ where $ \gamma \neq 0,\infty $. Write $ \gamma =b/a $ with $a,b$ coprime positive integers and suppose that $ b>|n_\rho \langle h_\infty ,y\rangle |$ for all $y\in \Omega$. Fix $ p $ such that
$$ \big| \dfrac{1}{\langle h_0,h_\infty \rangle} \big(\mu (\langle h_\infty ,y\rangle h_0-\langle h_0,y\rangle h_\infty\big)  +\mu(y)) \big| \leq p  \mbox{  for all } y\in \Omega.$$ (Note that $p$ is independent of $\gamma$.)  Let $E $ be any quasisimple in $ {\mathcal T}(\rho )$. Then:
$${\rm dim}(E)\geq \dfrac{1}{\langle h_0,h_\infty \rangle} \mu (ah_0+bh_\infty )-p  \hspace{1cm} (1).$$ Furthermore, if $ n_\rho =\langle h_0,h_\infty \rangle  $ then: $$\big| {\rm dim}(E)-\dfrac{1}{\langle h_0,h_\infty \rangle} \mu (ah_0+bh_\infty )\big|\leq p \hspace{1cm} (2).$$
\end{lemma}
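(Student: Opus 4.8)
The plan is to work out $\underline{\rm dim}(E)$ for a quasisimple $E$ in $\mathcal T(\rho)$ essentially explicitly and then apply $\mu$. List the quasisimples of $\mathcal T(\rho)$ as $E_1,\dots,E_{n_\rho}$, indexed cyclically with $\tau E_i=E_{i-1}$, set $d_i=\underline{\rm dim}(E_i)$, and write $e=\langle h_0,h_\infty\rangle$. The argument rests on two structural facts. First, each tube in the tubular family $\mathcal T_\gamma$ has the shape of a tube in the regular part of a tame concealed algebra — $\mathcal T_\gamma$ is obtained from $\mathcal T_0$ by tilting functors, which act linearly on $K_0(C)$ — and the ``null vector'' of that family is the primitive radical vector of slope $\gamma$, namely $ah_0+bh_\infty$ (here $\gcd(a,b)=1$ and \ref{radchiC} are used); consequently $\sum_{i=1}^{n_\rho}d_i=ah_0+bh_\infty$. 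Secondly, since $n_\rho\ge 2$ the distinct quasisimples of $\mathcal T(\rho)$ admit no nonzero maps between them, so by \ref{AusReit} and \ref{pdid1} each $E_i$ is a brick with ${\rm Ext}^1(E_i,E_i)\cong{\rm Hom}(E_i,\tau E_i)=0$; hence $\chi_C(d_i)={\rm dim}\,{\rm End}(E_i)-{\rm dim}\,{\rm Ext}^1(E_i,E_i)=1$ (the ${\rm Ext}^2$ term of the Euler form vanishing by \ref{pdid1}), and by \ref{bijomega} there is a unique expression $d_i=a_ih_0+b_ih_\infty+y_i$ with $a_i,b_i\in\mathbb Z$ and $y_i\in\Omega$.

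The heart of the proof is to compute the integers $a_i,b_i$. Choose quasisimples $G_0,G_\infty$ in homogeneous tubes of $\mathcal T_0$, respectively $\mathcal T_\infty$, so that $\underline{\rm dim}(G_0)=h_0$, $\underline{\rm dim}(G_\infty)=h_\infty$, $\tau G_0=G_0$ and $\tau G_\infty=G_\infty$. By \ref{canonmodcat}, ${\rm Hom}(E_i,G_0)=0={\rm Hom}(G_\infty,E_i)$; combining this with \ref{AusReit} and \ref{pdid1} (the ${\rm Ext}^2$ term of the Euler form again vanishing because the $E_i$ have injective dimension $1$) gives $\langle h_0,d_i\rangle={\rm dim}\,{\rm Hom}(G_0,E_i)$ and $\langle h_\infty,d_i\rangle=-{\rm dim}\,{\rm Hom}(\tau^{-1}E_i,G_\infty)$. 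Since $G_0,G_\infty$ are $\tau$-periodic and no map between the modules in question factors through a projective or an injective (again by \ref{canonmodcat}, as projectives lie in $\mathcal P_0$ and injectives in $\mathcal Q_\infty$), both of these numbers are independent of $i$, hence equal to their averages over $\mathcal T(\rho)$, which by the first fact are $\frac{b}{n_\rho}e$, respectively $-\frac{a}{n_\rho}e$. On the other hand $h_0,h_\infty\in{\rm rad}(\chi_C)$ forces $\langle h_0,h_0\rangle=\langle h_\infty,h_\infty\rangle=0$, so $\langle h_0,d_i\rangle=b_ie+\langle h_0,y_i\rangle$ and $\langle h_\infty,d_i\rangle=-a_ie+\langle h_\infty,y_i\rangle$; solving for $a_i,b_i$ and substituting,
$$d_i=\frac{1}{n_\rho}(ah_0+bh_\infty)+\frac{1}{e}\bigl(\langle h_\infty,y_i\rangle h_0-\langle h_0,y_i\rangle h_\infty\bigr)+y_i .$$
Applying $\mu$, using $\mu(ah_0+bh_\infty)\ge 0$ together with $n_\rho\le e$ for each of the listed algebras (the tube ranks being bounded by $e=\langle h_0,h_\infty\rangle$, with equality for the maximal tubes), and using the displayed definition of $p$ to bound the last two summands by $p$ in absolute value, one obtains ${\rm dim}(E)\ge\frac1{n_\rho}\mu(ah_0+bh_\infty)-p\ge\frac1e\mu(ah_0+bh_\infty)-p$ in general, and the two-sided estimate when $n_\rho=e$.

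The step I expect to be the main obstacle is the first structural fact — that $\sum_i d_i$ is the \emph{primitive} radical vector of slope $\gamma$ and not a proper multiple of it — which I would extract from Ringel's description of the tubular families $\mathcal T_\gamma$ in \cite{RinTame}, or transport from $\mathcal T_0$ along the tilting functors. A more delicate but routine point is the $\tau$-invariance of $\langle h_0,d_i\rangle$ and $\langle h_\infty,d_i\rangle$, which genuinely uses \ref{pdid1}: because the modules in $\mathcal T_\gamma$ have projective and injective dimension $1$, the Hom-spaces involved agree with their stable and costable quotients and are carried isomorphically by $\tau$. Finally, I expect the hypothesis $b>|n_\rho\langle h_\infty,y\rangle|$ to enter in keeping the integers $a_i=\frac a{n_\rho}+\frac1e\langle h_\infty,y_i\rangle$ non-negative over the whole tube, so that the displayed identity is consistent with each $d_i$ being an honest dimension vector, and in making the stated lower bound a positive quantity — the form in which \ref{dimleap} will use it.
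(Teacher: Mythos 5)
Your argument is correct, but its key step is genuinely different from the paper's. Both proofs share the same skeleton: each quasisimple $E_i$ is a brick without self-extensions, so $\chi_C(\underline{\rm dim}\,E_i)=1$ and \ref{bijomega} gives $\underline{\rm dim}\,E_i=a_ih_0+b_ih_\infty+y_i$, and both rely on the fact that $\sum_i\underline{\rm dim}\,E_i=ah_0+bh_\infty$ is the \emph{primitive} radical vector of slope $\gamma$ --- the paper simply quotes this from Ringel (the ``Recall REF'' in its proof), so the point you flag as the main obstacle is treated there exactly as you propose to treat it. Where you diverge is in pinning down the coefficients: the paper uses only the slope equation $\iota(\underline{\rm dim}\,E_i)=b/a$, writes numerator and denominator as $k_ib$ and $k_ia$ for a positive integer $k_i$, and sums over the tube; the hypothesis $b>|n_\rho\langle h_\infty,y\rangle|$ is used precisely there, to force $\sum_ik_i=\langle h_0,h_\infty\rangle$, after which $k_i\geq 1$ yields the lower bound and $n_\rho=\langle h_0,h_\infty\rangle$ forces every $k_i=1$, yielding the two-sided bound. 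You instead determine each $\underline{\rm dim}\,E_i$ exactly by showing that $\langle h_0,\underline{\rm dim}\,E_i\rangle$ and $\langle h_\infty,\underline{\rm dim}\,E_i\rangle$ are constant along the tube, hence equal to their averages $b\langle h_0,h_\infty\rangle/n_\rho$ and $-a\langle h_0,h_\infty\rangle/n_\rho$. That constancy is true (besides your stable-Hom argument, it follows numerically: $h_0\in{\rm rad}(\chi_C)$ gives $\langle h_0,x\rangle=-\langle x,h_0\rangle$, and $\tau$ acts on $\underline{\rm dim}\,E_i$ by the Coxeter transformation since the $E_i$ have projective dimension $1$ and no maps to projectives), so your version is actually stronger than the paper's: in particular you never need the hypothesis on $b$, whose sole role in the paper is the gcd bookkeeping just described --- not, as you guessed, positivity of the $a_i$. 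Two small corrections: the parenthetical ``projectives lie in $\mathcal{P}_0$ and injectives in $\mathcal{Q}_\infty$'' is not accurate for these algebras (the projectives at extension vertices, dually the coextension injectives, lie in non-homogeneous tubes of $\mathcal{T}_0$, resp.\ $\mathcal{T}_\infty$), but the vanishing you need survives because distinct tubes in one family admit no nonzero maps and $G_0,G_\infty$ sit in homogeneous tubes; and the reason ${\rm Hom}$ agrees with its stable/costable quotients is this separation property rather than \ref{pdid1}, which is what justifies the Auslander--Reiten formula steps.
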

\begin{proof} Let $ E_1,\dots ,E_{n_\rho}  $ denote the quasisimples of $ {\mathcal T}(\rho )$.  Since these have endomorphism ring $k$ and no self-extensions, their dimension vectors $x_i$ satisfy $\chi_C(x_i) =1$ so, by \ref{bijomega} there exists, for each $i$, $ c_i,d_i\in {\mathbb Z} $ and $ y_i\in \Omega $ such that: $$\underline{\rm dim}(E_i) =c_ih_0+d_ih_\infty +y_i.$$ Since the slope of $ E_i $ is $ b/a $ we have: $$b/a = \iota (\underline{\rm dim}(E_i)) = \dfrac{d_i\langle h_0,h_\infty \rangle +\langle h_0,y_i\rangle }{c_i\langle h_0,h_\infty \rangle -\langle h_\infty ,y_i\rangle }.$$

Let $ k_i={\rm gcd}\big(d_i\langle h_0,h_\infty \rangle +\langle h_0,y_i\rangle , c_i\langle h_0,h_\infty \rangle -\langle h_\infty ,y_i\rangle \big) $ (noting that both terms are non-zero). Then: $$k_ib=d_i\langle h_0,h_\infty \rangle +\langle h_0,y_i\rangle $$ $$k_ia=c_i\langle h_0,h_\infty \rangle -\langle h_\infty ,y_i\rangle .$$

So: $$\underline{\rm dim}(E_i) =\dfrac{1}{\langle h_0,h_\infty \rangle} \big((k_ia+\langle h_\infty ,y_i\rangle )h_0+(k_ib-\langle h_0,y_i\rangle) h_\infty \big) +y_i \hspace{0.5cm} (\ast).$$  It follows from \cite[5.3.3]{RinTame} (see \cite[\S 4.1]{Har}) that $ ah_0+bh_\infty =\sum_{i=1}^{n_\rho } \underline{\rm dim}(E_i)$. By considering the last coordinate in $ {\mathbb Z}^n $ of this equation we get (for each type of algebra $C$):
$$b=\sum_{i=1}^{n_\rho } d_i =\sum _{i=1}^{n_\rho } \dfrac{k_ib-\langle h_0,y_i\rangle }{\langle h_0,h_\infty \rangle }.$$ So:
$$ b\Big( \langle h_0,h_\infty \rangle -\sum _{i=1}^{n_\rho } k_i \Big) = -\sum _{i=1}^{n_\rho } \langle h_0 ,y_i\rangle .$$
Since $b> |n_\rho \langle h_\infty ,y\rangle|$ for all $ y\in \Omega$, we must have: $$\langle h_0,h_\infty \rangle =\sum_{i=1}^{n_\rho } k_i.$$

Recall that we are proving two statements. For the second, (2), if $ n_\rho =\langle h_0,h_\infty \rangle $, then $ k_i=1 $ for all $ i\leq k $ so: $$d_i=\dfrac{1}{n_\rho }(b-\langle h_0,y_i\rangle )$$
$$ c_i=\dfrac{1}{n_\rho }(a+\langle h_\infty ,y_i\rangle )$$ for all $ i\leq n_\rho $. Thus, using ($\ast$):
$$ \big| {\rm dim}(E_i) - \dfrac{1}{n_\rho }\mu (ah_0+bh_\infty )\big| = \big| \dfrac{1}{n_\rho }\big( \mu (\langle h_\infty ,y_i\rangle h_0-\langle h_0,y_i\rangle h_\infty\big)  +\mu(y_i)\big|\leq p$$ by choice of $p$.

To finish the proof of (1), if $ n_\rho < \langle h_0,h_\infty \rangle  $ then, again using ($\ast$):
$$ {\rm dim}(E_i) -\dfrac{1}{\langle h_0,h_\infty \rangle} \mu (ah_0+bh_\infty ) =$$ $$= \dfrac{1}{\langle h_0,h_\infty \rangle} \big( \mu ((k_ia+\langle h_\infty ,y_i\rangle )h_0) +\mu( (k_ib-\langle h_0,y_i\rangle )h_\infty) - \mu (ah_0+bh_\infty )\big) +\mu(y_i) \geq $$ $$ \geq  \dfrac{1}{\langle h_0,h_\infty \rangle}  \big( \mu ((a+\langle h_\infty ,y_i\rangle )h_0 ) +\mu ((b-\langle h_0,y_i\rangle )h_\infty) - \mu (ah_0+bh_\infty )\big) +\mu(y_i)=$$ $$= \dfrac{1}{\langle h_0,h_\infty \rangle} \big(\mu(\langle h_\infty,y_i\rangle h_0) -\mu (\langle h_0,y_i\rangle h_\infty ) +\mu(y_i) \geq -p$$ by choice of $p$, as required.
\end{proof}

It would be possible to check the conclusion of \ref{nravg} for these algebras just by computation: finding the dimension vectors of inhomogeneous quasisimple modules then computing the dimension vectors of their $\tau$-translates.  This, however, involves checking through many cases so we follow the proof in \cite{Har} since it covers all the relevant algebras with considerably less computation.  We do, though, need the calculation that, for each of these algebras, there is an inhomogeneous tube of rank $\langle h_0, h_\infty\rangle$, see \cite[\S 4.2]{Har}.

\begin{theorem}\label{dimleap}\marginpar{dimleap} Let $ C $ be of the form $ C(4,\lambda )$, $ C(6)$, $ C(7)$ or $ C(8)$. Given any positive irrational $ r$, any $ \epsilon >0 $ and any $ d\geq 1$, there exists a tube $ {\mathcal T}(\rho ) $ of rank $ \langle h_0,h_\infty \rangle  $ and with slope in $ (r-\epsilon ,r) $ such that, if $ E $ is a quasisimple of $ {\mathcal T}(\rho ) $ and $ N $ is an indecomposable finite-dimensional $ C$-module, then $$\iota(\underline{\rm dim}(N))\in (\iota(\underline{\rm dim}(E)),r) \mbox{ implies } {\rm dim}(N)\geq {\rm dim}(E)+d.$$ In particular, any nonzero morphism from $ E $ to a module in $ (\iota\underline({\rm dim}(E)) ,r) $ is an embedding.
\end{theorem}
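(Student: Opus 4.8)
The plan is to combine the ``jump'' lemma \ref{jumpnrr} with the dimension estimates of \ref{nravg}. First I would fix the constant $p$ of \ref{nravg} (it is independent of the slope) and set $k=\lceil\langle h_0,h_\infty\rangle(d+2p)\rceil$; this value is exactly what the final inequality needs. The tube $\mathcal{T}(\rho)$ will be a tube of rank $\langle h_0,h_\infty\rangle$ (such a tube exists in every $\mathcal{T}_q$ with $q$ rational) inside $\mathcal{T}_\gamma$, where $\gamma=\iota(x)$ for a primitive radical vector $x=ah_0+bh_\infty$ supplied by \ref{jumpnrr}: thus $r-\epsilon<\gamma<r$, $\iota(\underline{\rm dim}(E))=\gamma$ for any quasisimple $E$ of $\mathcal{T}(\rho)$, and every $y\in{\rm rad}^+(\chi_C)$ with $\gamma<\iota(y)<r$ has $\mu(y)>\mu(x)+k$. (If \ref{jumpnrr} returns a non-primitive vector, divide by the gcd: $\iota$ and the jump property survive and $\mu(x)$ only drops.) I will use that the dimension vectors of the quasisimples of any tube of $\mathcal{T}_q$ sum to the minimal positive radical vector $w_q$ of slope $q$ (so in particular they sum to $x$ for a tube of $\mathcal{T}_\gamma$).

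The genuinely delicate point --- and I expect the main obstacle --- is a uniformity issue: \ref{nravg} applies to a tube of slope $b'/a'$ only when $b'$ exceeds a bound depending on $C$, and I need this both for $\mathcal{T}(\rho)$ itself and for the tube containing every competitor module $N$, whose slope ranges over all of $(\gamma,r)$. Here irrationality of $r$ saves the day: for any $M$ there is $\eta>0$ such that every rational in $(r-\eta,r)$ has denominator $>M$ (the rationals of denominator $\le M$ near $r$ are finite in number and none equals $r$). So I would apply \ref{jumpnrr} with $\epsilon$ replaced by $\min(\epsilon,\eta,r/2)$; then every rational in $(\gamma,r)$ lies in $(r-\eta,r)$, so has large denominator, and --- being also bounded below by $r/2$ --- its minimal positive radical vector has large $h_\infty$-coordinate. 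Hence \ref{nravg} becomes available uniformly at all slopes in $(\gamma,r]$.

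Granting this, the estimate itself is short. Since the quasisimples of $\mathcal{T}(\rho)$ sum to $x$ and $\mathcal{T}(\rho)$ has rank $\langle h_0,h_\infty\rangle$, the two-sided conclusion of \ref{nravg} gives, for $E$ a quasisimple of $\mathcal{T}(\rho)$, that $\dim E\le\tfrac{1}{\langle h_0,h_\infty\rangle}\mu(x)+p$. If $N$ is indecomposable finite-dimensional with $\gamma'=\iota(\underline{\rm dim}(N))\in(\gamma,r)$ then, by \ref{canonmodcat}, $N$ lies in a tube of $\mathcal{T}_{\gamma'}$ and has the form $F[j]$ for a quasisimple $F$ of that tube, so $\dim N\ge\dim F\ge\tfrac{1}{\langle h_0,h_\infty\rangle}\mu(w_{\gamma'})-p$ (the first conclusion of \ref{nravg}, or trivially if the tube is homogeneous, in which case $\underline{\rm dim}(F)=w_{\gamma'}$). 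As $w_{\gamma'}\in{\rm rad}^+(\chi_C)$ and $\iota(w_{\gamma'})=\gamma'\in(\iota(x),r)$, the jump property gives $\mu(w_{\gamma'})>\mu(x)+k$; subtracting, $\dim N-\dim E>\tfrac{k}{\langle h_0,h_\infty\rangle}-2p\ge d$, as required.

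For the final assertion, let $f\colon E\to M$ be nonzero with every indecomposable summand of $M$ of slope in $(\gamma,r)$; write $f=(f_i)$ into the indecomposable summands $M_i$ and pick $i$ with $f_i\ne0$. If $f_i$ were not injective, ${\rm im}(f_i)$ would be a proper quotient of $E$, so $\dim{\rm im}(f_i)<\dim E$; any indecomposable summand $X$ of ${\rm im}(f_i)$ receives a nonzero map from $E\in\mathcal{T}_\gamma$, hence (by $(\mathcal{T}_\gamma,\mathcal{P}_\gamma)=0$) has slope $\ge\gamma$, while $X$ embeds in $M_i$ so (by $(\mathcal{Q}_q,\mathcal{T}_q)=0$) has slope $\le{\rm slope}(M_i)<r$. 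If ${\rm slope}(X)=\gamma$ then $X$ is in the same tube as $E$ (distinct tubes being orthogonal) and, a nonzero morphism from a quasisimple within a tube being a monomorphism, $\dim X\ge\dim E$; if ${\rm slope}(X)\in(\gamma,r)$ then the leap just proved gives $\dim X\ge\dim E+d$. Either way $\dim X\le\dim{\rm im}(f_i)<\dim E$ is contradicted, so every nonzero $f_i$, and hence $f$, is injective.
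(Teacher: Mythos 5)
Your proof follows essentially the same route as the paper's: fix $k\approx\langle h_0,h_\infty\rangle(d+2p)$, take the (primitive) radical vector supplied by \ref{jumpnrr}, choose a tube of rank $\langle h_0,h_\infty\rangle$ at that slope, and subtract the two dimension estimates of \ref{nravg}. The extra care you take over the numerator bound needed to apply \ref{nravg} uniformly (via irrationality of $r$), over the homogeneous-tube case, and over the final embedding assertion simply makes explicit what the paper handles parenthetically or leaves to the reader.
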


\begin{proof} Let $ p $ be the bound from \ref{nravg}. Pick any $ k\geq 1 $ large enough such that: $$\dfrac{1}{\langle h_0,h_\infty \rangle} k-2p\geq d.$$

By \ref{jumpnrr} there exist coprime $ a,b\in {\mathbb N} $ such that $ r-\epsilon <b/a <r $ and, given any $ a',b'\in {\mathbb N}$, $$b/a<b'/a'<r \Rightarrow \mu (a'h_0+b'h_\infty )>\mu (ah_0+bh_\infty )+k.$$ Pick $ {\mathcal T}(\rho ) $ to be any tube of index $ b/a $ and rank $ \langle h_0,h_\infty \rangle $.

Now, take any indecomposable $ N\in {\rm mod}\mbox{-}C $ with slope in the interval $ (b/a,r)$. We have $ N\simeq E'[j] $ for some quasisimple $ E'$. Let $ a',b' $ be coprime integers such that $ b'/a' $ is the slope of $ E'$. By \ref{jumpnrr} and \ref{nravg}(1) then \ref{nravg}(2) (we may assume that our choice of $b/a$ is such that every $b'$ occurring satisfies the condition on $b$ in \ref{nravg}): $${\rm dim}(E'[j])\geq {\rm dim}(E')\geq  \dfrac{1}{\langle h_0,h_\infty \rangle} \mu (a'h_0+b'h_\infty ) -p \geq$$ $$\geq  \dfrac{1}{\langle h_0,h_\infty \rangle} \big((\mu (ah_0+bh_\infty) +k\big)-p \geq $$ $$\geq  \dfrac{1}{\langle h_0,h_\infty \rangle} (\langle h_0,h_\infty \rangle \,{\rm dim}(E)-\langle h_0,h_\infty \rangle p+k)-p \geq$$ $$ \geq {\rm dim}(E)+d.$$
\end{proof}

\section{Width is undefined}\label{secwide}\marginpar{secwide}

Still $ C $ is an algebra of one of the types specified in Section \ref{sechomog}.  We show that the width of the lattice ${\rm pp}/\sim_r$ of definable subgroups of modules of slope $r$ is undefined.  This will be extended to arbitrary tubular algebras in the next section.

Recall that if $r$ is a positive irrational then we write $\phi \sim_r \psi$ iff $\phi(M(r)) =\psi(M(r))$ where $M(r)$ is any module which generates ${\mathcal D}_r$ as a definable category.  Also recall that a pp-pair is said to be open at $r$ if $\phi$ and $\psi$ are not equivalent under the equivalence relation $\sim_r$ and this, see \ref{cofinrtl}, is equivalent to being open near the left (or near the right) of $r$.  In \cite{Har} the relation $\sim_r$ was defined purely in terms of finite-dimensional modules, as being closed near the left of $r$ but, in view of \ref{cofinrtl}, these are equivalent.

\begin{theorem}\label{Cwidth}\marginpar{Cwidth} Let $ C $ be of the form $ C(4,\lambda )$, $ C(6)$, $ C(7)$ or $ C(8)$. For each positive irrational $ r $ the lattice $ {\rm pp}_C/\sim _r$, that is, the lattice of pp-definable subgroups of any generator $ M(r) $ for the definable category $ {\mathcal D}_r $,  has width $ \infty $ (that is, is undefined).  Indeed, every non-trivial interval in the lattice ${\rm pp}_C/\sim_r$ contains incomparable elements.
\end{theorem}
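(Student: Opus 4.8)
The plan is to prove the stronger ``Indeed'' assertion — every non-trivial interval $[\psi,\phi]$ of the lattice ${\rm pp}_C/\sim_r$ contains two incomparable elements — and to observe that this forces the width to be $\infty$. The latter is immediate from Ziegler's definition of width (\cite[\S7]{Zie}) via iterated collapsing of chain intervals: if every non-trivial interval contains incomparable elements then no non-trivial interval is a chain, so the collapsing process is stationary and never reaches the trivial lattice (which is non-trivial here since $M(r)\neq0$ makes $(v=v)$ and $(v=0)$ inequivalent at $r$); equivalently, from incomparable $\theta_1,\theta_2\in[\psi,\phi]$ one gets the non-trivial, incomparable sub-intervals $[\theta_1\wedge\theta_2,\theta_1]$ and $[\theta_1\wedge\theta_2,\theta_2]$, and iterating builds the binary tree of pp-pairs witnessing undefined width. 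So everything reduces to: given a pp-pair $\phi/\psi$ open at $r$, produce pp-formulas $\tau_1,\tau_2$ which are $\leq\phi$ on $M(r)$ — so that $\theta_i:=\psi+\tau_i$ lies in $[\psi,\phi]$ automatically — and with $\psi+\tau_1$, $\psi+\tau_2$ incomparable on $M(r)$.

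First I would normalise the data. By \ref{ppashom} and the remark after it I may take free realisations $(M',m')$ of $\phi$, $(N',n')$ of $\psi$ with $M',N'\in{\rm add}(\mathcal P_{r-\epsilon})$ and ${\rm coker}(m'),{\rm coker}(n')\in{\rm add}(\mathcal Q_{r+\epsilon})$, and fix a morphism $f\colon M'\to N'$ with $f(m')=n'$ (comments after \ref{freereal}); the point of the form of $M'$ is that its indecomposable summands have slope $<r-\epsilon$, so maps out of $M'$ into modules of slope in $(r-\epsilon,r)$ fall under the separating situation of \ref{canonmodcat}. By \ref{allopenl1} together with \ref{vdotdim} there are $\epsilon>0$ and $v\in{\rm K}_0(C)$ with ${\rm dim}\,\phi(X)-{\rm dim}\,\psi(X)=v\cdot\underline{\rm dim}(X)>0$ for all $X$ in $(r-\epsilon,r+\epsilon)$, so the $\phi/\psi$-gap is uniformly positive near $r$; I will use \ref{cofinrtl} throughout to translate ``incomparable on $M(r)$'' into ``open on cofinally many finite-dimensional indecomposables to the left of $r$''.

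The key input is that for each of the algebras $C$ in question $\langle h_0,h_\infty\rangle\ge2$, so \ref{dimleap} supplies tubes of rank $t=\langle h_0,h_\infty\rangle\ge2$ with the dimension-leap property and with slope arbitrarily close to $r$ from below. Fix such a tube $\mathcal T(\rho)$ of slope $\gamma\in(r-\epsilon,r)$, for a leap-parameter $d$ chosen large relative to ${\rm dim}\,M'$ and $v$, and let $E_1,\dots,E_t$ be its quasisimples. By the leap conclusion of \ref{dimleap} every nonzero map from a quasisimple of $\mathcal T(\rho)$ into a module of slope in $(\gamma,r)$ is an embedding, so data ``entering'' $\mathcal T(\rho)$ from the left survive faithfully, while by \ref{canonmodcat} every map from $M'$ to a module of slope in $(\gamma,r)$ factors through ${\rm add}(\mathcal T(\rho))$. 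I would then read off, from the modules $E_1[\ell]$ and $E_2[\ell]$ (with $\ell$ large) sitting in the rank-$t$ tube, pp-formulas $\tau_1,\tau_2\leq\phi$ on $M(r)$ that ``see different sides'' of $\mathcal T(\rho)$ — for instance $\tau_i$ generating the pp-type of the image of $m'$ under a suitable map $M'\to E_i[\ell]$. Because $t\ge2$, a realisation in $E_1[\ell]$ of an element of $\tau_1$ cannot, through the factorisation forced by \ref{canonmodcat}, be driven into the $E_2$-part of the tube, and conversely; translating this, for suitable modules $X$ of slope in $(\gamma,r)$ one obtains $\tau_1(X)\not\subseteq(\psi+\tau_2)(X)$ and, symmetrically, $\tau_2(X)\not\subseteq(\psi+\tau_1)(X)$. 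Running this over the rank-$t$ tubes supplied by \ref{dimleap} with slopes increasing to $r$, such witnesses occur cofinally to the left of $r$, and the quantitative bound of \ref{vdotdim} together with \ref{dimleap} guarantees they are not absorbed; so by \ref{cofinrtl} the pairs $(\psi+\tau_1)/\big((\psi+\tau_1)\wedge(\psi+\tau_2)\big)$ and its mirror are open at $r$, i.e. $\theta_1=\psi+\tau_1$ and $\theta_2=\psi+\tau_2$ are incomparable on $M(r)$, completing the argument.

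I expect the main obstacle to be exactly the construction and bookkeeping in that last step: choosing the maps $M'\to E_i[\ell]$ and the lengths $\ell$ so that the resulting $\tau_1,\tau_2$ are genuinely incomparable over $\psi$ at $r$ — small enough to remain $\leq\phi$ on $M(r)$, yet making the two ``branches'' of the rank-$t$ tube detect different pp-classes on cofinally many slopes approaching $r$. This is where the hypothesis $\langle h_0,h_\infty\rangle\ge2$, the embedding conclusion of \ref{dimleap}, and the separating structure of \ref{canonmodcat} have to be combined carefully; the earlier reductions (\ref{ppashom}, \ref{vdotdim}, \ref{cofinrtl}) are in place precisely so that this becomes a finite-dimensional combinatorial computation near $r$ that controls the lattice ${\rm pp}_C/\sim_r$.
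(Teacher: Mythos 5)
Your overall strategy matches the paper's in outline: normalise $\phi$ and $\psi$ via \ref{ppashom}, use \ref{dimleap} to find a tube $\mathcal{T}(\rho)$ of rank $\langle h_0,h_\infty\rangle\geq 2$ with slope $\gamma$ close to $r$ and a dimension leap keyed to the data, and manufacture two incomparable elements of $[\psi,\phi]$ in ${\rm pp}_C/\sim_r$ from two different rays of that tube; your reduction of ``width $=\infty$'' to the statement that every nontrivial interval contains incomparable elements is also fine. But the heart of the theorem is exactly the step you defer, namely the proof that $\psi+\tau_1$ and $\psi+\tau_2$ are genuinely incomparable at $r$, and the sketch you offer in its place does not work as stated. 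You propose to take $\tau_i$ generating the pp-type of an image of $m'$ in $E_i[\ell]$ with $\ell$ large, and to argue that because the tube has rank $\geq 2$ ``a realisation in $E_1[\ell]$ cannot be driven into the $E_2$-part''. Inside a single tube, however, modules of large quasi-length on different rays admit plenty of nonzero morphisms to one another, and the separation property of \ref{canonmodcat} says nothing about maps within ${\rm add}(\mathcal{T}(\rho))$; so for large $\ell$ the claimed isolation of the two branches simply fails, and no mechanism is provided that excludes a map carrying your chosen element of $E_1[\ell]$ into $(\psi+\tau_2)$ of a test module. As you acknowledge, the ``construction and bookkeeping in that last step'' is the obstacle --- but that step is the theorem.

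For comparison, the paper works with the quasisimples themselves rather than with $E_i[\ell]$ for large $\ell$: choose distinct quasisimples $E\neq E'$ of $\mathcal{T}(\rho)$ and witnesses $x\in\phi(E)\setminus\psi(E)$, $x'\in\phi(E')\setminus\psi(E')$ (which exist by \ref{allopenlh} --- this is where the non-homogeneous-tube analysis is consumed), let $\theta,\theta'$ generate their pp-types, and suppose $\psi+\theta\leq_r\psi+\theta'$. The contradiction is extracted from an explicit free realisation of $\theta\wedge(\psi+\theta')$ as a pushout $L$ of $x\colon C\to E$ and $(n,x')\colon C\to N\oplus E'$ (\ref{ppmeetsum}), the bound ${\rm dim}(L)<{\rm dim}(E)+{\rm dim}(E')+{\rm dim}(N)$, and \ref{dimleap} applied with leap parameter $d={\rm dim}(N)$: these force the induced map from $E$ to the slope-$<r$ part $L'$ of $L$ to be an embedding whose cokernel has slope exactly $\gamma$, hence an exact sequence $0\to E\to E[k]\to\tau^{-1}E[k-1]\to 0$ in ${\rm add}(\mathcal{T}(\rho))$. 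Since the quasisimple socle of $E[k]$ is $E\neq E'$, the component of the pushout map coming from $E'$ vanishes, and since ${\rm coker}(n)$ lies in $(r,\infty]$ (from \ref{ppashom}) the composite to $\tau^{-1}E[k-1]$ vanishes too, so $N$ is carried into $f''E\simeq E$ and $n\in\psi(N)$ forces $x\in\psi(E)$, the desired contradiction. Some argument of this kind --- pinning down where a hypothetical comparison map must land and killing it using the socle of the tube together with the location of ${\rm coker}(n)$ --- is indispensable, and it is precisely what your proposal leaves out; note also that it is the quasisimple (length-one) choice that makes the socle argument available, which is a further reason the ``$\ell$ large'' variant is the wrong starting point.
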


\begin{proof} Take any pp-pair $ \phi /\psi  $ which is open at $ r$. By \ref{ppashom} and \ref{ppashominf} we may replace each of the formulas $ \phi , \psi  $ by a formula to which it is $ \sim _r$-equivalent and with free realisations, let us write them as $ (M,m) $ and $ (N,n) $ (rather than $M'$ etc.), with properties as in \ref{ppashom} for some $\epsilon>0$.  We assume that we have made these replacements, so continue to write $ \phi , \psi $.  We may also suppose, by \ref{allopenlh}, that $\epsilon$ is small enough that $\phi/\psi$ is open on every module in $(r-\epsilon, r)$.

Let $ d={\rm dim}(N) $ and apply \ref{dimleap} to obtain $ \gamma \in (r-\epsilon ,r) \cap {\mathbb Q} $ and a tube $ {\mathcal T}(\rho ) $ of index $ \gamma  $ as in the statement of that result. Pick any quasisimple module $ E $ in $ {\mathcal T}(\rho ) $ and let $ E' $ be any other quasisimple module in that tube. Fix $ x\in \phi (E)\setminus \psi (E) $ and $ x'\in \phi (E')\setminus \psi (E')$ and let $ \theta $, respectively $ \theta '$  generate the pp-type of $ x $ in $ E$, resp. of $ x' $ in $ E'$. We shall show that the images of $ \psi +\theta  $ and $ \psi +\theta ' $ in $ {\rm pp}_C/\sim _r $ are incomparable.

$$
\vcenter{%
\xymatrix@C=10pt@R=8pt{%
&&*+={\circ}\ar@{-}[d]\ar@{}+<-8pt,0pt>*{\phi}&&\\
&&*+={\circ}&&\\
&*+={\circ}\ar@{-}[ur]\ar@{}+<-16pt,+2pt>*{\psi+\theta} &&*+={\circ}\ar@{-}[ul]\ar@{}+<+18pt,+2pt>*{\psi+\theta'}&\\
*+={\circ}\ar@{-}[ur]\ar@{}+<-0pt,-8pt>*{\theta}&&*+={\circ}\ar@{-}[d]\ar@{-}[ul]\ar@{-}[ur]\ar@{} &&
*+={\circ}\ar@{-}[ul]\ar@{}+<0pt,-8pt>*{\theta'}\\
&&*+={\circ}\ar@{}+<+7pt,0pt>*{\psi}
}}
$$

So suppose, for a contradiction, that $ \psi +\theta \leq _r \psi +\theta '$, that is, $ \theta \sim _r\theta \wedge (\psi +\theta ')$.  Therefore, by \ref{cofinrtl}, there is $ \delta >0 $ such that for all $X $ in $ (r-\delta ,r) $ we have $ \theta (X)=(\theta \wedge (\psi +\theta '))(X)$; we may take $ \delta <\epsilon $. A free realisation for $ \theta \wedge (\psi +\theta ') $ may be obtained, by \ref{ppmeetsum}, as the element $ l=g(x)=g'(n,x') $ in the pushout module $ L $ shown.

$\xymatrix{C \ar[r]^x \ar[d]_{(n,x')} &  E \ar[d]^g \\ N\oplus E' \ar[r]^{g'} &  L}$

\noindent Note that $ {\rm dim}(L)<{\rm dim}(E)+{\rm dim}(E')+{\rm dim}(N)$.

Write $L$ as $L'\oplus L''$ where each summand of $L'$ has slope $<r$ and each summand of $L''$ has slope $>r$.  Set $f'=\pi'g$ where $\pi':L\rightarrow L'$ is the induced projection.  Suppose that $ h : E \rightarrow Z $ where $Z$ is in $ (r-\delta ,r) $; then $h$ factors through $ g$. Indeed, since $ h(x)\in \theta (Z) \leq  (\psi +\theta ')(Z)$, there must exist, by \ref{freereal}, a map $ h':N\oplus E' \rightarrow Z $ with $h'(n,x')= h(x)$. The pushout property then gives a factorisation of $h$ through $g$, and hence through $f'$.  Since there are, by \ref{canonmodcat}, nonzero morphisms from $E$ to modules $Z$ in $(r-\delta, r)$, we deduce that $f'(x)\neq 0$ (in particular $L'\neq 0$).  We will show that $L'$ has no summand of slope $>\gamma$; we do this by showing that this is also true of ${\rm coker}(f')$, noting that by choice of $\gamma$ and since $L'$ is in $(\gamma, r)$, $f'$ is, by \ref{dimleap}, an embedding, so we have the exact sequence $$0\rightarrow  E\xrightarrow{f'}  L' \rightarrow  {\rm coker}(f') \rightarrow 0.$$   Pick $\gamma' \in (\gamma, r)$ such that $L'$ is in $(0,\gamma')$ (indeed, in $[\gamma, \gamma')$). First we show that ${\rm coker}(f')$ is in $[0,\gamma ')$.

Pick $Z$ of slope $\gamma'$; then we have the long exact sequence $$0\rightarrow  ({\rm coker}(f'),Z)\rightarrow  (L',Z)\xrightarrow{(f',Z)} (E,Z)\rightarrow  {\rm Ext}({\rm coker}(f'),Z)\rightarrow  {\rm Ext}(L',Z)=0$$ (the last term is $0$ since, by \ref{AusReit}, it has the same dimension as $(\tau^{-1}Z,L')=0$).  We have just seen that $(f',Z)$ is surjective so it follows that ${\rm Ext}({\rm coker}(f'),Z)=0$ and then, by \ref{AusReit} and \ref{canonmodcat}, that $(\tau^{-1}Z, {\rm coker}(f'))=0$.  This is so for every $Z$ of slope $\gamma'$ so, by \ref{canonmodcat}, ${\rm coker}(f')$ must be in $[0,\gamma')$.

Now, notice that ${\rm dim}({\rm coker}(f'))={\rm dim}(L')-{\rm dim}(E) \leq  {\rm dim}(L)-{\rm dim}(E) < {\rm dim}(N)+{\rm dim}(E') + {\rm dim}(E) -{\rm dim}(E) = {\rm dim}(N)+{\rm dim}(E')$.  So, by choice of $\gamma$, ${\mathcal T}(\rho)$ to satisfy \ref{dimleap}, and the fact that $E'$ is in ${\mathcal T}(\rho)$, it must be that every summand of ${\rm coker}(f')$ with slope $>\gamma$ has slope $>r$.  We saw above that every summand of ${\rm coker}(f')$ has slope $<\gamma'<r$, so we deduce that ${\rm coker}(f')$ has slope $\gamma$.

If $f'E$ lies in a complement of an indecomposable summand of $L'$ then we can drop that summand from the second and third terms of the sequence, hence we can assume that each (remaining) summand of $L'$ has slope at least $\gamma$.  If such a summand has nonzero image in ${\rm coker }(f')$ then its slope is, by the above, exactly $\gamma$.  If it had zero image then it would have to be contained in $f'E\simeq E$ hence, since $E$ is quasisimple, would equal $f'E$ so, again, would have slope $\gamma$.  So we have an exact sequence (perhaps after dropping some unnecessary summands) $$0\rightarrow  E\xrightarrow{f'}  L' \rightarrow  {\rm coker}(f') \rightarrow 0$$ lying in ${\rm add}({\mathcal T}(\rho))$.  If $L'$ is not indecomposable then we can replace it by a direct summand to which the induced map from $E$ is nonzero hence still an embedding, and that sequence will, up to isomorphism, have the form $$0\rightarrow E \xrightarrow{f''} E[k] \xrightarrow{p} \tau^{-1}E[k-1] \rightarrow 0.$$

Since $E[k]$ has quasisimple socle $E$, it follows that $\pi_1g'E'=0$ ($g'$ as in the diagram at the start of the proof), where $\pi_1:L\rightarrow E[k]$ is the projection, and hence $\pi_1g'(n)=f''(x)$, so $p \pi_1 g'(n)=0$.  Therefore $p\pi_1g'$ factors through ${\rm coker}(n)$ which, by choice of $N$, $n$ and $\epsilon$ to satisfy \ref{ppashom}, must be in $(r,\infty]$.  Therefore $p\pi_1g'=0$ and $\pi_1g'$ takes $N$ to $f''E$.  Since $n\in \psi(N)$ we deduce that $f''(x)\in \psi(f''E)$ and hence, since $f''$ is an isomorphism from $E$ to its image, that $x\in \psi(E)$ - a contradiction to the choice of $x$.

Thus $ \theta +\psi  $ and $ \theta '+\psi  $ are incomparable as claimed. This is true for arbitrary pairs $ \phi /\psi  $ as at the start of the proof, so we have shown, in particular, that the width of this lattice is $ \infty $.
\end{proof}

\section{Moving to other tubular algebras}\label{secmove}\marginpar{secmove}

To extend to arbitrary tubular algebras we use that, given such an algebra $ A $ there is a shrinking functor from $ A\mbox{-}{\rm Mod} $ to $ B\mbox{-}{\rm Mod} $ where $B$ is a canonical tubular algebra and then, given any canonical tubular algebra $B$, there is a shrinking functor from $ B\mbox{-}{\rm Mod} $ to $ C\mbox{-}{\rm Mod} $ where $ C $ is an algebra of one of the types dealt with in the previous sections. A {\bf shrinking functor} is a certain type of tilting functor, namely one where the tilting module $ T $ has the form $ T_0\oplus T_p $ where $ T_0 $ is a preprojective tilting $ A_0$-module, regarded as an $ A$-module ($ A_0 $ is the support of the radical vector $h_0$ and is obtained by omitting the single source vertex from the quiver of $ A$) and $ T_p $ is a projective $ A$-module - we say that $ T $ is a {\bf shrinking module}.  We recall the equivalences induced by tilting functors.

If $ T $ is a tilting left $ A$-module, set $ B={\rm End}(T) $ and consider the functors $ (T,-), {\rm Ext}^1_A(T,-):A\mbox{-}{\rm Mod} \rightarrow  B\mbox{-}{\rm Mod} $ and $ (T\otimes _B -), {\rm Tor}^B_1(T,-): B\mbox{-}{\rm Mod} \rightarrow  A\mbox{-}{\rm Mod}$. Define the subclasses $ {\mathcal F}(T)={\rm Ker}(T,-) $ and $ {\mathcal G}(T)={\rm Ker}({\rm Ext}(T,-)) $ of $ A\mbox{-}{\rm Mod} $ and the subclasses $ {\mathcal X}(T)={\rm Ker}(T\otimes -) $ and $ {\mathcal Y}(T)={\rm Ker}({\rm Tor}(T,-)) $ of $ B\mbox{-}{\rm Mod} $ (these are definable subclasses of the respective module categories, see \cite[4.8]{PreInterp} or \cite[\S\S 10.2.6, 18.2.3]{PreNBK}). The fundamental theorem gives equivalences between these.  Recall that $({\mathcal F}, {\mathcal T})$ is said to be a {\bf torsion pair} if $({\mathcal T}, {\mathcal F})=0$ and if ${\mathcal F}$, the {\bf torsionfree class}, and  ${\mathcal T}$, the {\bf torsion class} are maximal with respect to this property (we follow \cite{ReiRin} in writing the torsionfree class on the left).

\begin{theorem}\label{tilt}\marginpar{tilt} (\cite{BreBut})) If $ T $ is a tilting module then, with notation as above, $ ({\mathcal F}(T), {\mathcal G}(T)) $ is a torsion pair in $ A\mbox{-}{\rm Mod} $ and $ ({\mathcal Y}(T), {\mathcal X}(T)) $ is a torsion pair in $ B\mbox{-}{\rm Mod}$. Furthermore $ (T,-) $ and $ T\otimes - $ are inverse equivalences between $ {\mathcal G}(T) $ and $ {\mathcal Y}(T) $, and $ {\rm Ext}^1(T,-) $ and $ {\rm Tor}_1(T,-) $ are inverse equivalences between $ {\mathcal F}(T) $ and $ {\mathcal X}(T)$.
\end{theorem}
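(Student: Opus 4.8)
The plan is to reproduce the classical Brenner--Butler argument; indeed, since the statement \emph{is} essentially \cite{BreBut}, one option is simply to cite it, but let me indicate how the proof goes. Recall that a (classical) tilting module $T$ has projective dimension $\leq 1$, satisfies $\operatorname{Ext}^1_A(T,T)=0$, and fits into a short exact sequence $0\to A\to T^0\to T^1\to 0$ with $T^0,T^1\in\operatorname{add}(T)$. Writing $B=\operatorname{End}(T)$, so that $T$ is an $(A,B)$-bimodule, the whole statement is extracted by systematically applying the four functors $(T,-)$, $\operatorname{Ext}^1_A(T,-)$, $T\otimes_B-$, $\operatorname{Tor}_1^B(T,-)$ to short exact sequences, using that $\operatorname{pd}(T)\leq 1$ (so each adjoint pair has only the two displayed ``derived'' halves) and the adjunction $(T\otimes_B-)\dashv(T,-)$.

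First I would identify the classes. One checks $\mathcal{G}(T)=\operatorname{Gen}(T)$, the class of modules admitting an epimorphism from a direct sum of copies of $T$: the inclusion $\operatorname{Gen}(T)\subseteq\mathcal{G}(T)$ is immediate from $\operatorname{Ext}^1(T,T)=0$ and $\operatorname{pd}(T)\leq1$, while the reverse inclusion uses the tilting sequence $0\to A\to T^0\to T^1\to 0$. Since $(T,-)$ is left exact, $\mathcal{F}(T)=\{M:(T,M)=0\}$ is closed under submodules and products, and $\mathcal{G}(T)=\operatorname{Gen}(T)$ is closed under quotients and (because $\operatorname{Ext}^1(T,T)=0$) under extensions. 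That $(\mathcal{F}(T),\mathcal{G}(T))$ is a torsion pair then reduces to producing, for each $M$, the canonical sequence $0\to\operatorname{tr}_T(M)\to M\to M/\operatorname{tr}_T(M)\to 0$ whose torsion part is the trace of $T$ in $M$, and checking $M/\operatorname{tr}_T(M)\in\mathcal{F}(T)$; this last point is where one applies $(-,M)$ and then $(T,-)$ to the tilting sequence. The torsion pair $(\mathcal{Y}(T),\mathcal{X}(T))$ in $B\text{-Mod}$ is handled the same way, either by noting that $T$ is again a tilting module over $B$ (the ``tilted back'' module) or directly: $\mathcal{X}(T)=\{X:T\otimes_B X=0\}$ is closed under submodules, $\mathcal{Y}(T)$ under quotients and extensions, and any $X$ is split by the trace of $T\otimes_B-$.

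For the equivalences I would analyse the unit $\eta_X\colon X\to (T,T\otimes_B X)$ and counit $\varepsilon_M\colon T\otimes_B(T,M)\to M$ of the adjunction. One shows, in order: (i) $\varepsilon_M$ is an isomorphism for $M\in\operatorname{add}(T)$ (trivial at $M=T$), hence for $M\in\operatorname{Gen}(T)=\mathcal{G}(T)$ via a presentation $T^1\to T^0\to M\to 0$ and the five lemma, together with $(T,M)\in\mathcal{Y}(T)$; (ii) dually $T\otimes_B X\in\mathcal{G}(T)$ and $\eta_X$ is an isomorphism for $X\in\mathcal{Y}(T)$; (iii) for the second pair, a dimension shift transfers this to $\mathcal{F}(T)$ and $\mathcal{X}(T)$: given $N\in\mathcal{F}(T)$, apply $T\otimes_B-$ and $(T,-)$ to a copresentation of $\operatorname{Ext}^1_A(T,N)$ (or equivalently read off the long exact sequences of $\operatorname{Ext}$ and $\operatorname{Tor}$ attached to the tilting sequence), using the facts already proved about $\mathcal{G}(T)\leftrightarrow\mathcal{Y}(T)$ to conclude that $\operatorname{Ext}^1_A(T,-)$ and $\operatorname{Tor}_1^B(T,-)$ restrict to inverse equivalences $\mathcal{F}(T)\leftrightarrow\mathcal{X}(T)$.

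The main obstacle is step (iii): showing the relevant unit/counit natural transformations for the $\operatorname{Ext}$--$\operatorname{Tor}$ pair are isomorphisms requires threading the long exact sequences obtained by applying $T\otimes_B-$ and $(T,-)$ to resolutions of the modules in question and carefully invoking vanishing of $\operatorname{Ext}^1(T,-)$ on $\mathcal{G}(T)$ and of $\operatorname{Tor}_1^B(T,-)$ on $\mathcal{Y}(T)$; everything preceding it is bookkeeping with the tilting sequence and left/right exactness. As noted, the cleanest route in practice is just to quote \cite{BreBut}.
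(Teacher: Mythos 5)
Your outline is the standard Brenner--Butler argument and is essentially correct; the paper itself gives no proof at all, simply quoting \cite{BreBut}, which is exactly the option you note at the end. The only point worth keeping in mind is that the statement is for the full categories $A\mbox{-}{\rm Mod}$ and $B\mbox{-}{\rm Mod}$, so one needs $T$ finite-dimensional (hence finitely presented of projective dimension $\leq 1$) to know the relevant ${\rm Ext}$/${\rm Tor}$ vanishing passes to arbitrary direct sums and limits -- which your sketch implicitly uses and which holds here.
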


Suppose now that $ A $ is a tubular algebra and that $ T $ is a shrinking $ A$-module; set $ B={\rm End}(T)$. Then (\cite[5.5.1]{RinTame}) $ B $ also is tubular and (\cite[4.1.7]{RinTame}) there is a linear map $ \sigma _T:K_0(A)\rightarrow K_0(B) $ such that $ \sigma _T(\underline{{\rm dim}}(M)) = \underline{{\rm dim}}(T,M) - \underline{{\rm dim}}({\rm Ext}(T,M))$. By \cite[p.~290]{RinTame}, $ \sigma _T(h^A_0)=h^B_0 $ and $ \sigma _T(h^A_\infty ) $ is in the radical of $ \chi_B$, so there exist non-negative rationals $ n_0,n_\infty  $ such that $ \sigma _T(h^A_\infty )=n_0h^B_0+n_\infty h^B_\infty $. Define $ \overline{\sigma }:{\mathbb Q}^\infty _0\rightarrow {\mathbb Q}^\infty _0 $ by $ \overline{\sigma }(\gamma )=\dfrac{n_\infty \gamma }{n_0\gamma +1}$; then $\overline{\sigma}(\infty)= n_\infty / n_0$ and $ \overline{\sigma } $ is an order-preserving bijection from $ {\mathbb Q}^\infty _0 $ to $ \{\delta \in {\mathbb Q}^\infty _0: 0\leq \delta \leq n_\infty /n_0\}$, so we can also denote by $ \overline{\sigma } $ the extension of this map to all real numbers in the respective intervals. The tilting functor $ (T,-) $ is said to be {\bf proper} if $ n_0\neq 0$, that is, if $\overline{\sigma}(\infty)\neq \infty$.

\begin{theorem}\label{tiltequiv}\marginpar{tiltequiv} \cite[5.4.1, 5.4.2', 5.4.3]{RinTame} Suppose that $ T $ is a shrinking $A$-module. Then any indecomposable finitely presented module not in $ {\mathcal G}(T) $ is a preprojective $ A_0$-module.  If $(T,-)$ is proper then this functor is an equivalence between $ {\mathcal P}^A_\infty  \cap {\mathcal G}(T) $ and $ {\mathcal P}^B_{\overline{\sigma}(\infty)} $. Furthermore, for each $ \gamma \in {\mathbb Q}^\infty _0$, $ (T,-) $ induces an equivalence between $ {\mathcal T}_\gamma  $ and $ {\mathcal T}_{\overline{\sigma }(\gamma )}$.
\end{theorem}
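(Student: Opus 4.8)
This is \cite[\S 5.4]{RinTame}; the plan is to deduce it from the Brenner--Butler equivalence \ref{tilt} together with two further ingredients: a description of which finitely presented indecomposables fail to lie in $ {\mathcal G}(T) $, and the numerical fact that $ (T,-) $ shifts the index exactly by $ \overline{\sigma} $. Recall from \ref{tilt} that $ (T,-) $ and $ T\otimes_B- $ are mutually inverse equivalences between $ {\mathcal G}(T) $ and $ {\mathcal Y}(T) $, so the substance is to show that these classes are large enough to contain the tubular families and that the functor matches them up correctly.

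\emph{The ``lost'' modules.} Since $ T=T_0\oplus T_p $ with $ T_p $ projective, $ {\rm Ext}^1_A(T,M)={\rm Ext}^1_A(T_0,M) $ for every $ M $, so $ M\notin {\mathcal G}(T) $ iff $ {\rm Ext}^1_A(T_0,M)\neq 0 $. A tilting module has projective dimension $ \leq 1 $, so \ref{AusReit} gives $ {\rm Ext}^1_A(T_0,M)\simeq (M,\tau T_0) $, and hence $ M\notin{\mathcal G}(T) $ precisely when there is a nonzero morphism $ M\rightarrow\tau T_0 $. Now $ T_0 $ is a preprojective tilting module over the tame concealed algebra $ A_0 $ (part of the structure of a shrinking module, see \cite{RinTame}), so the indecomposable summands of $ \tau T_0 $ lie among the preprojective $ A_0 $-modules, which sit inside $ {\mathcal P}_0 $; by \ref{canonmodcat} any module admitting a nonzero morphism to such a module is itself a preprojective $ A_0 $-module, which is the first assertion. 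In particular every indecomposable in $ \bigcup_{q\in{\mathbb Q}^+}{\mathcal T}_q $ and in $ {\mathcal P}^A_\infty $ lies in $ {\mathcal G}(T) $.

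\emph{The index shift and the equivalences.} For $ M\in {\mathcal G}(T) $ finitely presented, $ {\rm Ext}^1(T,M)=0 $, so $ \underline{\rm dim}(T,M)=\sigma_T(\underline{\rm dim}\,M) $. The tilting functor induces an isometry of Grothendieck groups for the Euler forms, $ \langle\sigma_T x,\sigma_T y\rangle_B=\langle x,y\rangle_A $; combining this with $ \sigma_T h^A_0=h^B_0 $ and $ \sigma_T h^A_\infty=n_0h^B_0+n_\infty h^B_\infty $ we get, for any $ x $ with $ \iota(x)=\gamma $,
\[ \iota(\sigma_T x)=-\frac{\langle h^B_0,\sigma_T x\rangle_B}{\langle h^B_\infty,\sigma_T x\rangle_B}=-\frac{\langle h^A_0,x\rangle_A}{\tfrac{1}{n_\infty}\big(\langle h^A_\infty,x\rangle_A-n_0\langle h^A_0,x\rangle_A\big)}=\frac{n_\infty\gamma}{n_0\gamma+1}=\overline{\sigma}(\gamma), \]
and similarly $ \iota(\sigma_T h^A_\infty)=n_\infty/n_0=\overline{\sigma}(\infty) $. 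By \ref{canonmodcat}, $ {\mathcal T}_\gamma $ is exactly the class of finitely presented indecomposables of index $ \gamma $ lying in neither $ {\mathcal P}_0 $ nor $ {\mathcal Q}_\infty $, and these lie in $ {\mathcal G}(T) $ by the previous paragraph; since $ (T,-) $ restricts to an equivalence $ {\mathcal G}(T)\xrightarrow{\ \sim\ }{\mathcal Y}(T) $ that preserves ``finitely presented indecomposable'' and multiplies indices by $ \overline{\sigma} $, it carries $ {\mathcal T}_\gamma $ into $ {\mathcal T}^B_{\overline{\sigma}(\gamma)} $ (one establishes the $ {\mathcal P}^A_\infty $ statement first; combined with the index computation and \ref{canonmodcat} for $ B $ this pins the image of each $ {\mathcal T}_\gamma $ to the tubular family $ {\mathcal T}^B_{\overline{\sigma}(\gamma)} $). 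When $ (T,-) $ is proper the same considerations give the equivalence $ {\mathcal P}^A_\infty\cap{\mathcal G}(T)\to{\mathcal P}^B_{\overline{\sigma}(\infty)} $. Finally, essential surjectivity onto $ {\mathcal T}^B_{\overline{\sigma}(\gamma)} $ (resp.\ onto $ {\mathcal P}^B_{\overline{\sigma}(\infty)} $) follows by running the same argument for the inverse functor $ T\otimes_B- $, or by a direct count, since the number and ranks of the tubes in each tubular family are prescribed.

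\emph{Main obstacle.} The delicate part is the first step: controlling $ \tau T_0 $, and so pinning down exactly which finitely presented indecomposables lie in $ {\mathcal F}(T) $ rather than in $ {\mathcal G}(T) $, requires the precise structure of how the preprojective, tube and preinjective parts of $ A_0\mbox{-}{\rm mod} $ embed into $ A\mbox{-}{\rm mod} $, together with a case distinction according to whether $ (T,-) $ is proper (so that $ {\mathcal P}^A_\infty $ maps to $ {\mathcal P}^B_{\overline{\sigma}(\infty)} $ with $ \overline{\sigma}(\infty)=n_\infty/n_0 $ finite, rather than to all of $ B\mbox{-}{\rm Mod}\setminus{\mathcal Q}^B_\infty $). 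A lesser subtlety is that the inverse of a shrinking module need not itself be shrinking, so essential surjectivity onto the full target families is not purely formal; all of this is carried out in \cite[\S 5.4]{RinTame}.
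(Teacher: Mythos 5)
This theorem is not proved in the paper at all: it is quoted from Ringel \cite[5.4.1, 5.4.2', 5.4.3]{RinTame}, so there is no in-paper argument to compare your proposal against. As a reconstruction of how the result is established in that source, your sketch is sound in outline: the reduction to the Brenner--Butler equivalence \ref{tilt}, the observation that ${\rm Ext}^1_A(T,-)={\rm Ext}^1_A(T_0,-)$ and hence, via ${\rm pd}_A T\leq 1$ and \ref{AusReit}, that the indecomposables outside ${\mathcal G}(T)$ are exactly those with a nonzero map to $\tau T_0$, and the index computation are all correct --- in particular the calculation that the isometry property of $\sigma_T$ together with $\sigma_T(h^A_0)=h^B_0$ and $\sigma_T(h^A_\infty)=n_0h^B_0+n_\infty h^B_\infty$ yields $\iota(\sigma_T x)=\overline{\sigma}(\iota(x))$ checks out. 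What your outline leaves genuinely unproved (and you flag this) is exactly the content of Ringel's \S 5.4: that every indecomposable summand of $\tau T_0$ is a preprojective $A_0$-module, which needs the precise structure of shrinking modules and of how $A_0\mbox{-}{\rm mod}$ sits in $A\mbox{-}{\rm mod}$; that an indecomposable image $(T,M)$ of index $\overline{\sigma}(\gamma)$ actually lies in a tubular family, since the index pins down the family only for modules outside ${\mathcal P}^B_0\cup{\mathcal Q}^B_\infty$, so the separation properties of \ref{canonmodcat} for $B$ must be invoked more carefully than ``multiplies indices by $\overline{\sigma}$''; and essential surjectivity, which is not formal because $T\otimes_B-$ is not itself given by a shrinking module. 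Since the paper simply cites \cite{RinTame} for all of this and your write-up ultimately delegates the same points to the same place, your proposal is acceptable as a citation-plus-sketch, but it should not be mistaken for a self-contained proof.
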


Recall that $ M(r) $ denotes any module which generates the definable subcategory of $ A$-modules with slope $ r$; similarly denote by $ N(\overline{\sigma }(r)) $ a $ B$-module which generates the definable category of modules of slope $ \overline{\sigma }(r)$. We aim to show that if the width of $ {\rm pp}^B/\sim_{\overline{\sigma }(r)} $ is $ \infty  $ then so is the width of $ {\rm pp}^A/\sim_r$.  We could use the very general results about interpretation functors in \cite{PreDefAddCat} - as we do in Section \ref{secfurther}, see \ref{locequiv}.  But we can give a direct and explicit proof as follows.

Suppose that $ \overline{t}=(t_1,\dots, t_k) $ is a sequence of elements which together generate $ T $ as an $ A$-module. We define a map, easily seen to be order-preserving, from $ {\rm pp}_B $ to $ {\rm pp}_A^k $ as follows, where $ {\rm pp}_A^k $ denotes the lattice of equivalence classes of pp formulas with $k$, rather than just one, free variables (see the background references for details). If $ \phi (v)\in {\rm pp}_B$, let $ (M,m) $ be a free realisation of $ \phi  $ and consider the $ k$-pointed $ A$-module $ (T\otimes M,\overline{t}\otimes m) $ (where $ \overline{t}\otimes m $ means $ (t_1\otimes m, \dots, t_k\otimes m)$); let $ \phi ^T $ denote a pp formula which generates the pp-type of the $k$-tuple $ \overline{t}\otimes m $ in $ T\otimes M$.

\begin{theorem}\label{widthatr}\marginpar{widthatr} If $ \phi ,\psi \in {\rm pp}_B $ are such that $ \phi ^T\sim_r \psi ^T $ then $ \phi \sim_{\overline{\sigma }(r)} \psi  $, hence the map $ \phi \mapsto \phi ^T $ induces an embedding of $ {\rm pp}_B/\sim_{\overline{\sigma }(r)} $ into $ {\rm pp}_A/\sim_r$. In particular if $ {\rm w}({\rm pp}_B/\sim_{\overline{\sigma }(r)})=\infty  $ then $ {\rm w}({\rm pp}_A/\sim_r)=\infty $.
\end{theorem}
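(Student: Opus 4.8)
The plan is to give an explicit description of $\phi\mapsto\phi^T$ through the tilting equivalence and then read the theorem off it. The only genuinely representation-theoretic ingredient is a slope-transfer statement, which I would prove first: \emph{the functor $T\otimes-$ restricts to an equivalence between the $B$-modules of slope $\overline\sigma(r)$ and the $A$-modules of slope $r$, with quasi-inverse $(T,-)$.} Recall that ${\mathcal Y}(T)$ and ${\mathcal G}(T)$ are definable, hence closed under direct limits, and that by \ref{tiltequiv} $(T,-)$ restricts to an equivalence ${\mathcal T}^A_\gamma\to{\mathcal T}^B_{\overline\sigma(\gamma)}$ for every rational $\gamma$; so every finite-dimensional module of rational slope lies in the appropriate one of ${\mathcal Y}(T)$, ${\mathcal G}(T)$, and $T\otimes-$ carries ${\mathcal T}^B_{\overline\sigma(\gamma)}$ back to ${\mathcal T}^A_\gamma$. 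Given a $B$-module $N$ of slope $\overline\sigma(r)$, \ref{limit} writes it as a directed union of finite-dimensional submodules of slope just below $\overline\sigma(r)$, hence in tubes ${\mathcal T}^B_\gamma\subseteq{\mathcal Y}(T)$; so $N\in{\mathcal Y}(T)$, and $T\otimes N$ is the direct limit of their images, which are finite-dimensional $A$-modules of slope just below $r$. Since ${\rm Ext}^1(X,-)$ and $(X,-)$ commute with direct limits for $X$ finite-dimensional, the characterisation of slope $r$ as ${\rm Ext}^1({\mathcal P}_r,-)=0=({\mathcal Q}_r,-)$ transfers through this direct limit (shrinking the interval in \ref{limit} below the slope of the given $X$ when necessary), so $T\otimes N$ has slope $r$; running the same argument with $(T,-)$ gives the converse, so the restricted functors are mutually inverse.

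Next I would prove the key correspondence: \emph{for $N\in{\mathcal Y}(T)$ the map $n\mapsto\overline t\otimes n$ is a bijection of $\phi(N)$ onto $\phi^T(T\otimes N)$.} Fix a free realisation $(M,m)$ of $\phi$, so $(T\otimes M,\overline t\otimes m)$ is by definition a free realisation of $\phi^T$. If $n\in\phi(N)$ then by \ref{freereal} some $B$-morphism $M\to N$ sends $m$ to $n$; applying $T\otimes-$ and \ref{freereal} again gives $\overline t\otimes n\in\phi^T(T\otimes N)$. Conversely, if $\overline a\in\phi^T(T\otimes N)$ there is an $A$-morphism $f\colon T\otimes M\to T\otimes N$ with $f(\overline t\otimes m)=\overline a$; the torsion submodule $tM$ for the torsion pair $({\mathcal Y}(T),{\mathcal X}(T))$ lies in ${\mathcal X}(T)={\rm Ker}(T\otimes-)$, so $T\otimes M\simeq T\otimes\overline M$ with $\overline M=M/tM\in{\mathcal Y}(T)$, and since $T\otimes-$ is full and faithful on ${\mathcal Y}(T)$, $f=T\otimes g$ for a $B$-morphism $g\colon\overline M\to N$. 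Thus $\overline a=\overline t\otimes g(\overline m)$, and composing $g$ with $M\twoheadrightarrow\overline M$ shows $g(\overline m)\in\phi(N)$. Finally $n\mapsto\overline t\otimes n$ is injective on ${\mathcal Y}(T)$: as $\overline t$ generates $T$ over $A$, an $A$-morphism out of $T$ killing every $t_i$ is zero, and for $N\in{\mathcal Y}(T)$ the adjunction unit $N\to{\rm Hom}_A(T,T\otimes N)$, $n\mapsto(t\mapsto t\otimes n)$, is an isomorphism.

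The theorem then follows formally. Since $\phi\sim_r\psi$ (resp.\ $\phi\sim_{\overline\sigma(r)}\psi$) holds iff $\phi$, $\psi$ agree on every $A$-module of slope $r$ (resp.\ every $B$-module of slope $\overline\sigma(r)$), and since by slope transfer the former modules are exactly the $T\otimes N$ with $N$ of slope $\overline\sigma(r)$, the bijections $\overline t\otimes-$ identify $\phi(N)$ with $\phi^T(T\otimes N)$ and $\psi(N)$ with $\psi^T(T\otimes N)$ simultaneously; hence $\phi^T\sim_r\psi^T$ iff $\phi\sim_{\overline\sigma(r)}\psi$. In particular the first assertion holds, and $\phi\mapsto\phi^T$ descends to an injection of ${\rm pp}_B/\sim_{\overline\sigma(r)}$ into the lattice of pp-definable subgroups of $k$-tuples in $M(r)$. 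It is a lattice homomorphism: it preserves $\leq$ by applying $T\otimes-$ to a witnessing morphism, and, using \ref{ppmeetsum} in its $k$-variable form together with the facts that $T\otimes-$ preserves pushouts and finite direct sums and that $A^k\twoheadrightarrow T$ via $e_i\mapsto t_i$, it sends free realisations of $\phi\wedge\psi$, $\phi+\psi$ to free realisations of $\phi^T\wedge\psi^T$, $\phi^T+\psi^T$. So ${\rm pp}_B/\sim_{\overline\sigma(r)}$ embeds as a sublattice; since width cannot increase on passing to a sublattice, and the lattice of pp-definable subgroups of $k$-tuples has the same width as that of single elements (the standard reduction from several to one free variable, noting $M(r)^k$ also generates ${\mathcal D}_r$), ${\rm w}({\rm pp}_B/\sim_{\overline\sigma(r)})=\infty$ forces ${\rm w}({\rm pp}_A/\sim_r)=\infty$.

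The main obstacle is the slope-transfer statement: transporting the notion of slope across $(T,-)$ and $T\otimes-$ really uses the tubular structure through \ref{tiltequiv} and the direct-limit description \ref{limit} of modules of irrational slope, plus definability of ${\mathcal Y}(T)$, ${\mathcal G}(T)$. The delicate point in the correspondence is that $tM$ is invisible to $T\otimes-$, so that $f$ genuinely descends to a $B$-morphism after factoring it out; and it is precisely the fact that $\overline t$ generates $T$ over $A$, making $n\mapsto\overline t\otimes n$ injective on ${\mathcal Y}(T)$, that upgrades the correspondence from an order-preserving map to a lattice embedding. One should also note at the outset that $(T,-)$ is proper, so that $\overline\sigma(r)$ is again a positive irrational lying in the range of $\overline\sigma$.
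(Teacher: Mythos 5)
Your proposal is correct, but it follows a genuinely different route from the paper's own proof of \ref{widthatr} --- essentially the alternative the authors mention just before the theorem (``we could use the very general results about interpretation functors \dots as we do in Section \ref{secfurther}'') and then decline in favour of a direct argument. You first prove that $(T,-)$ and $T\otimes_B-$ restrict to inverse equivalences between ${\mathcal D}_{\overline{\sigma}(r)}$ and ${\mathcal D}_r$; that is essentially \ref{locequiv}/\ref{locequivatr}, which the paper only establishes later, and it is the heaviest part of your argument (definability of ${\mathcal Y}(T)$ and ${\mathcal G}(T)$, \ref{limit} applied over $B$, properness so that the relevant slopes lie in the range of $\overline{\sigma}$). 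From this plus your pointwise bijection $\phi(N)\simeq\phi^T(T\otimes N)$ for $N\in{\mathcal Y}(T)$ you deduce the theorem formally, and in fact get the two-way statement $\phi^T\sim_r\psi^T\Leftrightarrow\phi\sim_{\overline{\sigma}(r)}\psi$, hence a lattice isomorphism onto the image --- anticipating \ref{equivatrpp}. The paper's proof, by contrast, never moves an infinite-dimensional module across the tilting functor: it uses \ref{cofinrtl}/\ref{allopenl1} to translate $\phi^T\sim_r\psi^T$ into closure on finite-dimensional modules in an interval $(r-\epsilon,r)$, transfers only those modules tube-by-tube via \ref{tiltequiv}, and pulls the witnessing morphism back through the adjunction (the same mechanism as your ``key correspondence'', but only for finite-dimensional $Y$), returning to $\overline{\sigma}(r)$ by the ``closed near the left'' criterion; this is lighter but yields only the one implication actually stated. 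Two small points in your write-up: in the slope-transfer step the interval from \ref{limit} should be shrunk so that it lies \emph{above} the slope of the given test module $X$ (your phrasing says ``below''), and your explicit check that $\phi\mapsto\phi^T$ is a lattice map plus the $k$-variable-to-one-variable width reduction is appropriate --- the paper leaves both of these implicit.
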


\begin{proof} Let $ (M,m) $ and $ (N,n) $ be free realisations of $ \phi  $ and $ \psi  $ respectively. Suppose that $ \phi ^T\sim_{r} \psi ^T$. Pick any $ \epsilon >0 $ such that $ \phi ^T(X) =\psi ^T(X) $ for all $ X $ in $ (r-\epsilon ,r)$ (by \ref{allopenl1} there is such an $\epsilon$). We claim that $ \phi (Y)=\psi (Y) $ for all $ Y $ in $ (\overline{\sigma }(r-\epsilon ),\overline{\sigma }(r))$ and hence (by \ref{allopenl1}) that $\phi \sim_{\overline{\sigma }(r)} \psi $.

Take any $ y\in \phi (Y)$, so there is $ h\in (M,Y) $ with $ h(m)=y$. Consider the map $ T^k \xrightarrow{-\otimes m} T\otimes M \xrightarrow{1_T\otimes h} T\otimes Y $, where the first $A$-linear map is defined by $t_i\mapsto t_i\otimes m$, and set $ x_i =t_i\otimes y$ to be the image of $ t_i $ in $T\otimes Y$ for $ i=1,\dots, k$. Then $ \overline{x} =(x_1,\dots, x_k) \in \phi ^T(T\otimes Y) $ (by definition of $ \phi ^T$ and \ref{freereal}). It follows from what was said above that $ T\otimes Y $ is in $ (r-\epsilon ,r) $ so, by assumption, $ \overline{x}  \in \psi ^T(T\otimes Y) $, therefore there is a map $ f:T\otimes N\rightarrow T\otimes Y $ taking $ t_i\otimes n $ to $ x_i $ for each $ i$.

By \ref{tiltequiv} we can apply the inverse equivalence $ (T,-) $ from \ref{tilt}.  Then we have an isomorphism $N\simeq (T,T\otimes N)$ which takes $n$ to the map $t\mapsto t\otimes n$; the composition with $(T,f):(T,T\otimes N) \rightarrow (T,T\otimes Y) \simeq Y$ takes $n$ to the map which takes $t_i$ to $f(t_i\otimes n) =x_i =t_i\otimes y$ and that is the map which corresponds to $y$ under the isomorphism $Y\simeq (T,T\otimes Y)$.  That is, we have a map $N\rightarrow Y$ which takes $n$ to $y$.  Since $n\in \psi(N)$ it follows that $y\in \psi(Y)$, as required.
\end{proof}

Combined with \ref{spdecwdth} this gives the result on width and superdecomposable pure-injectives.
\begin{cor}\label{spdecatr}\marginpar{spdecatr} Let $A$ be a tubular algebra and let $r$ be a positive irrational.  Then the width of the lattice of pp formulas for the definable category ${\mathcal D}_r$ is undefined.  If $A$ is countable then there is a superdecomposable pure-injective module of slope $r$.
\end{cor}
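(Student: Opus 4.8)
The plan is to bootstrap from \ref{Cwidth}, which already proves the corollary's first statement for the four special algebras $C(4,\lambda)$, $C(6)$, $C(7)$, $C(8)$, and then to transport undefined width to an arbitrary tubular algebra along shrinking functors by means of the transfer principle \ref{widthatr}; the superdecomposable pure-injective in the countable case will then drop out of the relative version \ref{spdecwdth} of Ziegler's theorem.

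Concretely, given an arbitrary tubular algebra $A$ and a positive irrational $r$, I would first pass, in two steps, down to one of the special algebras. Choose a shrinking $A$-module $T_1$ with $B={\rm End}(T_1)$ a canonical tubular algebra, and then a shrinking $B$-module $T_2$ with $C={\rm End}(T_2)$ one of the four algebras above; let $\overline{\sigma}_1$, $\overline{\sigma}_2$ be the associated order-preserving bijections between the relevant slope intervals (as in \ref{tiltequiv} and the discussion preceding it), and put $s=\overline{\sigma}_1(r)$, $t=\overline{\sigma}_2(s)$. The one genuine verification at this stage is that $s$ and $t$ are again \emph{positive irrationals}: each $\overline{\sigma}_i$ is of the form $\gamma\mapsto n_\infty\gamma$ or $\gamma\mapsto n_\infty\gamma/(n_0\gamma+1)$ with $n_0,n_\infty$ non-negative rationals and $n_\infty\neq 0$, and a one-line computation shows any such map carries positive irrationals to positive irrationals (if the value were rational, solving for $\gamma$ would force $\gamma$ to be rational, using $n_\infty\neq 0$). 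Granting this, \ref{Cwidth} applies to the positive irrational $t$ and gives ${\rm w}({\rm pp}_C/\sim_t)=\infty$.

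Next I would invoke \ref{widthatr} twice. Applied to the shrinking $B$-module $T_2$ — so that the ``$A$'' of that statement is $B$, its ``$B$'' is $C$, and $\overline{\sigma}_2(s)=t$ — it provides an embedding of lattices ${\rm pp}_C/\sim_t\hookrightarrow{\rm pp}_B/\sim_s$, whence ${\rm w}({\rm pp}_B/\sim_s)=\infty$. Applied again to the shrinking $A$-module $T_1$ (with $\overline{\sigma}_1(r)=s$), it gives ${\rm w}({\rm pp}_A/\sim_r)=\infty$. Since ${\rm pp}_A/\sim_r$ is, by the identification made in Section \ref{secirrat}, precisely the lattice of pp-definable subgroups of a generator $M(r)$ of ${\mathcal D}_r$, i.e.\ the lattice of pp formulas for the definable category ${\mathcal D}_r$, this establishes the first assertion. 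For the second, assume $A$ countable; then ${\rm pp}_A$, and hence its quotient ${\rm pp}_A/\sim_r$, is countable, so the countable-case converse in \ref{spdecwdth} yields a superdecomposable pure-injective module $N$ in ${\mathcal D}_r$. As ${\mathcal D}_r$ is by definition the category of modules of slope $r$ (equivalently, by \ref{indecslope}, every indecomposable in it has slope $r$), $N$ is a superdecomposable pure-injective of slope $r$.

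I expect essentially no obstacle here: all the real difficulty was upstream, in \ref{Cwidth} together with \ref{dimleap}, \ref{ineqomega}, \ref{allopenlh} and the transfer result \ref{widthatr}. What remains is bookkeeping, and the only place to exercise a little care is the two-stage shrinking — needed because the explicit dimension estimates of Sections \ref{sechomog}--\ref{secwide} were carried out only for the four algebras $C$ — together with the check that the Möbius maps $\overline{\sigma}_i$ respect ``positive irrational'' so that the hypotheses of \ref{Cwidth} and of \ref{widthatr} line up at each step.
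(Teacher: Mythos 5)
Your proposal is correct and is essentially the argument the paper intends: \ref{spdecatr} is stated right after \ref{widthatr} as the combination of \ref{Cwidth}, the two-stage shrinking from $A$ to a canonical $B$ to one of the special algebras $C$ described at the start of Section \ref{secmove} with \ref{widthatr} applied twice, and Ziegler's relative theorem \ref{spdecwdth} in the countable case. Your explicit check that the maps $\overline{\sigma}_i$ carry positive irrationals to positive irrationals is exactly the small verification the paper leaves implicit.
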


Another dimension was introduced in \cite[p.~191]{Zie}; in \cite[\S 10.2]{PreBk}, it was termed m-dimension and given a definition in terms of inductively collapsing intervals of finite length in the lattice of pp formulas.  What turned out to be the same dimension, but set in the associated functor category and defined by inductively collapsing finitely presented functors of finite length, was introduced by Geigle in \cite{GeiKGdim} (also see \cite[p.~197ff.]{JeLe}) and termed Krull-Gabriel dimension; we will use this latter term.  Note that it refers to a process not in the definable category but in the associated functor category (see Section \ref{secfurther} and, e.g., \cite[\S 13.2]{PreNBK}).

\begin{cor}\label{KGatr}\marginpar{KGatr} Let $A$ be a tubular algebra and let $r$ be a positive irrational.  Then the Krull-Gabriel dimension of the definable category ${\mathcal D}_r$ is undefined.  If $A$ is countable then there are continuum many indecomposable pure-injective modules of slope $r$.
\end{cor}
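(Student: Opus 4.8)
The plan is to read off both assertions from the undefined width of ${\rm pp}_A/\sim_r$ established in \ref{spdecatr}, together with the identification of Krull--Gabriel dimension with m-dimension and the standard analysis of the Ziegler spectrum of a countable ring.

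For the first statement, recall from the discussion preceding this corollary (\cite{GeiKGdim}) that the Krull--Gabriel dimension of ${\mathcal D}_r$ is the m-dimension of the lattice ${\rm pp}_A/\sim_r$ of pp-definable subgroups of $M(r)$, so it is enough to show this lattice has no m-dimension. The cleanest route uses \ref{Cwidth} directly: for one of the algebras $C(4,\lambda)$, $C(6)$, $C(7)$, $C(8)$ and a positive irrational $s$ (to be specified), every non-trivial interval of ${\rm pp}_C/\sim_s$ contains incomparable elements; in particular ${\rm pp}_C/\sim_s$, and every interval of it, has no covering pairs, hence no non-trivial interval satisfying the descending chain condition, and a straightforward transfinite induction then shows that such a modular lattice has no m-dimension. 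Composing the two embeddings of \ref{widthatr} (a shrinking functor from $A$ to a canonical tubular algebra $B$, then one from $B$ to a suitable $C$) exhibits ${\rm pp}_C/\sim_s$, with $s=\overline{\sigma}_2(\overline{\sigma}_1(r))$, as a sublattice of ${\rm pp}_A/\sim_r$; since having no m-dimension passes from a sublattice to any modular lattice containing it, ${\rm pp}_A/\sim_r$ has no m-dimension, as required. (Alternatively one may simply quote \ref{spdecatr} together with the general fact that undefined width forces undefined m-dimension.)

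For the second statement assume $A$ is countable. By \ref{indecslope} every indecomposable pure-injective in ${\mathcal D}_r$ has slope $r$, so it suffices to exhibit $2^{\aleph_0}$ pairwise non-isomorphic indecomposable pure-injectives in ${\mathcal D}_r$. Since $A$ is countable, ${\rm pp}_A$ is countable and the Ziegler spectrum of $A$ has a countable basis of (compact) open sets; one then runs the tree argument of \cite[7.1]{Zie}. Using \ref{Cwidth} (transported to $A$ as above) one builds a binary tree $(\phi_t/\psi_t)_{t\in 2^{<\omega}}$ of pp-pairs open at $r$ in which the two successors of any node are incomparable in ${\rm pp}_A/\sim_r$, arranging along the way, by means of a fixed enumeration of ${\rm pp}_A$, that each branch of the tree determines a complete pp-type; by the Compactness Theorem \ref{cpct} (or by compactness of the spectrum) each such type is realised in an indecomposable pure-injective module, necessarily of slope $r$, and two branches that diverge at a node give non-isomorphic modules since they disagree on the corresponding incomparable pair. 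This produces the required $2^{\aleph_0}$ modules; equivalently, over a countable ring a definable category of undefined width contains a non-empty perfect closed subset of the spectrum, hence continuum many indecomposable pure-injectives (see \cite{Zie}, \cite{PreNBK}).

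The first statement is essentially formal given \ref{spdecatr}. The work, and the main obstacle, lies in the second: one must organise the tree of pp-pairs so that each branch genuinely names an indecomposable (and not merely a superdecomposable) pure-injective, and this is exactly where countability of $A$ is used --- through the enumerability of ${\rm pp}_A$, equivalently second countability of the spectrum --- and is, as the authors note, the point at which the argument currently needs the field to be countable. A minor additional check is that the irrational $s=\overline{\sigma}_2(\overline{\sigma}_1(r))$ obtained at the foot of the shrinking chain of Section \ref{secmove} is again a positive irrational, so that \ref{Cwidth} applies there; this holds because each $\overline{\sigma}$ is a M\"{o}bius transformation with rational coefficients and so preserves positive irrationals.
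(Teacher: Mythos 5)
Your proposal is correct and, in substance, it is the paper's own (implicit) argument: the paper gives no proof beyond deriving the first assertion from \ref{spdecatr} via the standard lattice fact that undefined width forces undefined m-dimension (which is identified with Krull--Gabriel dimension, as in the discussion preceding the corollary), and quoting Ziegler \cite[8.1, 8.4]{Zie} for the countable case; your route through \ref{Cwidth} and the embeddings of \ref{widthatr} (or \ref{equivatrpp}) is just an unwinding of how \ref{spdecatr} was obtained, and your check that $\overline{\sigma}$ preserves positive irrationals is a fair point that the paper leaves tacit. The one caution concerns your inline sketch for the second assertion: the tree with incomparable successors is the width tree of \cite[7.1]{Zie}, whose branches naturally produce a superdecomposable hull, and the step ``each branch determines a complete pp-type realised in an indecomposable pure-injective'' is not automatic --- a complete pp-type has indecomposable hull only if it is irreducible, and arranging irreducibility along every branch is precisely the content of Ziegler's \cite[8.1, 8.4]{Zie} (equivalently, of the Cantor--Bendixson/perfect-set analysis over a countable ring, see \cite[\S 8]{Zie} or \cite{PreNBK}). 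Since you in any case fall back on citing that theorem, exactly as the paper does, the corollary is established; just be aware that the sketch as written does not by itself replace that citation.
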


The consequence in the case that the field is countable is another theorem of Ziegler (\cite[8.1, 8.4]{Zie}).

Both \ref{spdecatr} and \ref{KGatr} (without any restrictions on cardinality) are statements about the complexity of morphisms in the category of {\bf pointed} finitely presented {\bf modules} $(M,m)$, equivalently morphisms $A\rightarrow M$.  These may be preordered by the relation $(M,m)\geq (N,n)$ if there is a morphism $f:M\rightarrow N$ with $f(m)=n$ and the resulting poset is naturally identified with ${\rm pp}_A$.  (More generally, if $X\in A\mbox{-}{\rm mod}$, then the similarly defined poset of $X$-pointed modules, that is morphisms $X\rightarrow Y\in A\mbox{-}{\rm mod}$ is, if $X$ is $k$-generated, a sublattice of ${\rm pp}^k_A$, see \cite[\S 3.1]{PreNBK}.)  Krull-Gabriel dimension being undefined is equivalent to existence of a `factorisable system of morphisms' in $A\mbox{-}{\rm mod}$ as defined in \cite{PreMor} (or see \cite[7.2.13]{PreNBK}).  It is less easy to give a nice description, in terms of the category of finitely presented modules, of width being undefined but a useable sufficient condition is given in \cite[5.4]{PyaPunToff3}.

\section{Further extensions}\label{secfurther}\marginpar{secfurther}

We obtain more information about the definable categories of modules supported at an irrational or supported on an open interval.  For some results we will have to use the results proved for the particular algebras $C$ from Section \ref{sechomog} and then use tilting functors, as in Section \ref{secmove}, to obtain the general case.  In order to facilitate that, we will use the general results on interpretations mentioned (and indeed illustrated in a particular case) in the previous section.

\begin{cor}\label{locequiv}\marginpar{locequiv} Suppose that $T$ is a shrinking $A$-module such that $(T,-)$ is proper and let $r$ be a positive irrational.  Then there is an open neighbourhood $(r-\epsilon, r+\epsilon)$ such that the restrictions of $(_AT,-)$ and $T\otimes_B -$ are equivalences between the definable subcategory ${\mathcal D}_{(r-\epsilon, r+\epsilon)}$ of $A\mbox{-}{\rm Mod}$ of modules supported on $(r-\epsilon, r+\epsilon)$ and the definable subcategory ${\mathcal D}_{(\overline{\sigma}(r-\epsilon), \overline{\sigma}(r+\epsilon))}$ of $B\mbox{-}{\rm Mod}$ of $B$-modules supported on the open neighbourhood $(\overline{\sigma}(r-\epsilon), \overline{\sigma}(r+\epsilon))$ of $\overline{\sigma}(r)$, the equivalences being given by the restrictions of $(_AT,-)$ and $T\otimes_B-$.
\end{cor}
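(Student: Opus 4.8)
The plan is to reduce everything to the Brenner--Butler equivalences of \ref{tilt} together with the tube-by-tube description of the shrinking functor in \ref{tiltequiv}. First I would fix $\epsilon>0$ small enough that $0<r-\epsilon$; then every slope in $(r-\epsilon,r+\epsilon)$ is a positive rational, and the $\overline{\sigma}$-image of this interval is an open subinterval of $(\overline{\sigma}(0),\overline{\sigma}(\infty))=(0,n_\infty/n_0)$, in particular consists of positive reals all lying in the range of $\overline{\sigma}$. What I then need is: (1) the inclusions ${\mathcal D}_{(r-\epsilon,r+\epsilon)}\subseteq{\mathcal G}(T)$ in $A\mbox{-}{\rm Mod}$ and ${\mathcal D}_{(\overline{\sigma}(r-\epsilon),\overline{\sigma}(r+\epsilon))}\subseteq{\mathcal Y}(T)$ in $B\mbox{-}{\rm Mod}$; and (2) that the restriction of $(_AT,-)$ carries ${\mathcal D}_{(r-\epsilon,r+\epsilon)}$ into ${\mathcal D}_{(\overline{\sigma}(r-\epsilon),\overline{\sigma}(r+\epsilon))}$ and the restriction of $T\otimes_B-$ carries it back. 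Once I have these, \ref{tilt} says that the two functors are already mutually inverse equivalences between ${\mathcal G}(T)$ and ${\mathcal Y}(T)$, so they restrict to mutually inverse equivalences between the two supported-module categories.

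For (1): by \ref{ddplus} every $A$-module supported on $(r-\epsilon,r+\epsilon)$ is a directed union of its finite-dimensional submodules with slope in that interval, and each such finite-dimensional module has all of its indecomposable summands of positive rational slope, hence lying in some ${\mathcal T}_\gamma^A$ with $\gamma>0$. By \ref{tiltequiv} the only indecomposable finitely presented modules outside ${\mathcal G}(T)$ are preprojective $A_0$-modules, which lie in ${\mathcal P}_0$ and so have slope $0$; hence each such summand is in ${\mathcal G}(T)$, the finite-dimensional module is too, and since ${\mathcal G}(T)$ is a definable subcategory of $A\mbox{-}{\rm Mod}$ it is closed under the directed union. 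The same argument works on the $B$-side: every finite-dimensional $B$-module supported on $(\overline{\sigma}(r-\epsilon),\overline{\sigma}(r+\epsilon))$ has its indecomposable summands in tubes ${\mathcal T}_{\overline{\sigma}(\gamma)}^B$ with $\gamma\in(r-\epsilon,r+\epsilon)$, and by \ref{tiltequiv} the functor $(_AT,-)$ maps ${\mathcal T}_\gamma^A$ onto ${\mathcal T}_{\overline{\sigma}(\gamma)}^B$, so these tubes lie in ${\mathcal Y}(T)$; definability of ${\mathcal Y}(T)$ then gives the inclusion.

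For (2): since $T$ is finite-dimensional, hence finitely presented, $(_AT,-)$ commutes with the directed unions above, and $T\otimes_B-$ commutes with all direct limits, so it suffices to check the claim on finite-dimensional modules. If $M$ is finite-dimensional supported on $(r-\epsilon,r+\epsilon)$ then $M\in{\mathcal G}(T)$ by (1), so $(_AT,M)$ is its image under the equivalence; by \ref{tiltequiv} each indecomposable summand of $M$, lying in some ${\mathcal T}_\gamma^A$ with $\gamma\in(r-\epsilon,r+\epsilon)$, goes to a module in ${\mathcal T}_{\overline{\sigma}(\gamma)}^B$, and since $\overline{\sigma}$ is order-preserving $\overline{\sigma}(\gamma)\in(\overline{\sigma}(r-\epsilon),\overline{\sigma}(r+\epsilon))$; thus $(_AT,M)$ is supported on the target interval. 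The reverse inclusion, via $T\otimes_B-$ and $\overline{\sigma}^{-1}$, is symmetric. Passing back to directed unions and direct limits gives (2), and assembling (1), (2) with \ref{tilt} yields the stated equivalences.

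I expect the only real subtlety to be in (1): one must be sure that finite-dimensional modules of positive slope really are in ${\mathcal G}(T)$ — this is exactly where the identification in \ref{tiltequiv} of the indecomposables outside ${\mathcal G}(T)$ as preprojective $A_0$-modules (which have slope $0$) is used, and where the choice $r-\epsilon>0$ enters — together with the observation that the target interval lies inside the range $(0,n_\infty/n_0)$ of $\overline{\sigma}$ so that the symmetric argument on the $B$-side applies. After that, the use of definability of ${\mathcal G}(T)$ and ${\mathcal Y}(T)$ to move from finite-dimensional modules to their directed unions, and the fact that $(_AT,-)$ and $T\otimes_B-$ commute with the relevant (co)limits, are routine.
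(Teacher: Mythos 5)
Your argument is correct, but it assembles the ingredients differently from the paper. You work inside the full module categories: you first prove the inclusions ${\mathcal D}_{(r-\epsilon,r+\epsilon)}\subseteq {\mathcal G}(T)$ and ${\mathcal D}_{(\overline{\sigma}(r-\epsilon),\overline{\sigma}(r+\epsilon))}\subseteq {\mathcal Y}(T)$ (using \ref{tiltequiv} to identify the indecomposables outside ${\mathcal G}(T)$, \ref{ddplus} to reduce to finite-dimensional modules, and definability of ${\mathcal G}(T)$, ${\mathcal Y}(T)$ for closure under direct limits), then check that $(_AT,-)$ and $T\otimes_B-$ carry each supported category into the other, and finally restrict the Brenner--Butler equivalences of \ref{tilt} between ${\mathcal G}(T)$ and ${\mathcal Y}(T)$. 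The paper never invokes the torsion-theoretic inclusions nor the module-category form of \ref{tilt}: it uses only the equivalence on finite-dimensional modules coming from \ref{tiltequiv}, notes via \ref{ddplus} that both definable categories are locally finitely presented, and argues abstractly that functors commuting with direct limits whose composites are naturally the identity on finite-dimensional objects must be inverse equivalences. Your route buys the natural isomorphisms of the composites to the identity for free (they are the unit and counit of the tilting adjunction restricted along your inclusions), and so avoids the implicit step in the paper that agreement with the identity on finitely presented objects propagates naturally to all direct limits; the paper's route buys economy, needing nothing about ${\mathcal G}(T)$ and ${\mathcal Y}(T)$ beyond the fact that the two functors commute with direct limits. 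One small imprecision: preprojective $A_0$-modules do not literally ``have slope $0$'' --- modules in ${\mathcal P}_0$ are excluded from the slope assignment after \ref{canonmodcat} --- but what your argument actually uses is only that they lie in ${\mathcal P}_0$, which is disjoint from every tube ${\mathcal T}_\gamma$ with $\gamma>0$, so the step stands as long as you phrase it that way; you are also right that properness ($n_0\neq 0$) is what guarantees the target interval sits inside $(0,n_\infty/n_0)$ so that the symmetric argument on the $B$-side applies.
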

\begin{proof} Set ${\mathcal D}={\mathcal D}_{(r-\epsilon, r+\epsilon)}$  and ${\mathcal E}={\mathcal D}_{(\overline{\sigma}(r-\epsilon), \overline{\sigma}(r+\epsilon))}$.  By \ref{tiltequiv} and the discussion that precedes that, these functors define inverse equivalences between the categories of finitely presented modules in these categories.  We may assume that $r-\epsilon$ is rational (and $>0$) and, therefore (just by its definition), so is $\overline{\sigma}(r-\epsilon)$.  By \ref{ddplus} each module in ${\mathcal D}$ is a direct limit of finite-dimensional modules in ${\mathcal D}$, and the same for ${\mathcal E}$.  That is, each of these definable categories is locally finitely presented and any functor between them which commutes with direct limits, as both $(T,-)$ and $T\otimes -$ do, is determined by its restriction to the finite-dimensional modules.  Since the compositions of these two functors are naturally equivalent to the respective identity functors on finite-dimensional modules, they are inverse equivalences as claimed.
\end{proof}

Since both $(T,-)$ and $T\otimes-$ also commute with direct products they are, \cite[18.2.22]{PreNBK}, interpretation functors in the sense of \cite{PreDefAddCat}, see also \cite{KraEx}, and so induce an inclusion-preserving bijection between the definable subcategories of ${\mathcal D}_{(r-\epsilon, r+\epsilon)}$  and ${\mathcal D}_{(\overline{\sigma}(r-\epsilon), \overline{\sigma}(r+\epsilon))}$, from which we deduce the following.

\begin{cor}\label{locequivatr}\marginpar{locequivatr}  The inverse natural equivalences in \ref{locequiv} restrict to inverse natural equivalences between the subcategory, ${\mathcal D}_r$, of $A\mbox{-}{\rm Mod}$ of modules of slope $r$ and the subcategory, ${\mathcal D}_{\overline{\sigma}(r)}$, of $B\mbox{-}{\rm Mod}$ of modules of slope $\overline{\sigma}(r)$.
\end{cor}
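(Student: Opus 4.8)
The plan is to bootstrap everything from Corollary \ref{locequiv}, which already gives an equivalence between neighbourhoods of $r$ and of $\overline\sigma(r)$; the only new work is to see that, under this equivalence, "slope exactly $r$" is transported to "slope exactly $\overline\sigma(r)$", and this will come down to the fact (\ref{tiltequiv}) that $(_AT,-)$ sends finite-dimensional modules of slope $\gamma$ to finite-dimensional modules of slope $\overline\sigma(\gamma)$, together with the fact that $\overline\sigma$ is an order-preserving continuous bijection on the relevant interval. So, first I would fix the $\epsilon>0$ produced by \ref{locequiv}, so that $(_AT,-)$ and $T\otimes_B -$ are inverse equivalences between ${\mathcal D}_{(r-\epsilon,r+\epsilon)}$ and ${\mathcal D}_{(\overline\sigma(r-\epsilon),\overline\sigma(r+\epsilon))}$. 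By \ref{limit}, every module of slope $r$ is supported on $(r-\epsilon,r)$, hence lies in ${\mathcal D}_{(r-\epsilon,r+\epsilon)}$; similarly every $B$-module of slope $\overline\sigma(r)$ lies in ${\mathcal D}_{(\overline\sigma(r-\epsilon),\overline\sigma(r+\epsilon))}$. Thus ${\mathcal D}_r$ and ${\mathcal D}_{\overline\sigma(r)}$ sit inside the two interval categories, and it suffices to show that $(_AT,-)$ carries ${\mathcal D}_r$ into ${\mathcal D}_{\overline\sigma(r)}$ and that $T\otimes_B -$ carries ${\mathcal D}_{\overline\sigma(r)}$ into ${\mathcal D}_r$; the natural isomorphisms of \ref{locequiv} then restrict (these are full subcategories) to make the restricted functors inverse equivalences.

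For the inclusion $(_AT,-)({\mathcal D}_r)\subseteq{\mathcal D}_{\overline\sigma(r)}$, take $M\in{\mathcal D}_r$ and any $\delta$ with $0<\delta<\epsilon$. By \ref{limit}, $M=\varinjlim_i M_i$ with the $M_i$ finite-dimensional submodules all of whose indecomposable summands have slope in $(r-\delta,r)$. None of these summands is preprojective, so by \ref{tiltequiv} each $M_i$ lies in ${\mathcal G}(T)$ and $(_AT,M_i)$ is finite-dimensional with every summand of slope in $(\overline\sigma(r-\delta),\overline\sigma(r))$, using that $\overline\sigma$ is order-preserving. Since $(_AT,-)$ commutes with direct limits ($T$ being finitely presented), $(_AT,M)=\varinjlim_i(_AT,M_i)$ is supported on $(\overline\sigma(r-\delta),\overline\sigma(r))$, i.e.\ lies in ${\mathcal D}_{(\overline\sigma(r-\delta),\overline\sigma(r))}$, hence by \ref{ddplus} lies over that interval. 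Letting $\delta\to 0$ and using continuity of $\overline\sigma$, these vanishings assemble into $\bigl((_AT,M),{\mathcal P}_{\overline\sigma(r)}\bigr)=0=\bigl({\mathcal Q}_{\overline\sigma(r)},(_AT,M)\bigr)$, which is exactly the statement that $(_AT,M)$ has slope $\overline\sigma(r)$.

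The reverse inclusion is symmetric: $\overline\sigma^{-1}$ is again an order-preserving continuous bijection on the relevant interval, $T\otimes_B -$ commutes with direct limits, and by \ref{tiltequiv} it sends finite-dimensional $B$-modules of slope $\overline\sigma(\gamma)$ back to finite-dimensional $A$-modules of slope $\gamma$; so running the previous paragraph with $N\in{\mathcal D}_{\overline\sigma(r)}$, written via \ref{limit} as a direct limit of finite-dimensional modules of slope in $(\overline\sigma(r-\delta),\overline\sigma(r))$, shows that $T\otimes_B N$ has slope $r$. Combining the two inclusions with the observation in the first paragraph finishes the proof.

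The only point that needs a little care is the bookkeeping at the end of the second paragraph: that the collection of conditions "$(_AT,M)$ lies over $(\overline\sigma(r-\delta),\overline\sigma(r))$", taken over all small $\delta>0$, combines to precisely the Hom/Ext-vanishing that defines slope $\overline\sigma(r)$. This reduces to $\sup_{\delta>0}\overline\sigma(r-\delta)=\overline\sigma(r)$ with the supremum not attained (so that "slope $<\overline\sigma(r)$" is covered), together with the fact that the treatment of the class ${\mathcal P}_0$ is already built into \ref{ddplus} and \ref{canonmodcat}; so there is no real obstacle, only this routine verification.
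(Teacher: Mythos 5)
Your argument is correct and is exactly the verification the paper leaves implicit: the corollary is stated without proof, being regarded as immediate from \ref{locequiv} together with \ref{limit}, \ref{tiltequiv} and \ref{ddplus}, which is precisely how you transport the slope condition (writing a slope-$r$ module as a directed union of finite-dimensional submodules in $(r-\delta,r)$, applying the tilting equivalence on each tubular family, and letting $\delta\to 0$ using monotonicity and continuity of $\overline{\sigma}$). Nothing in your write-up conflicts with the paper's framework, and the point you flag at the end (covering ${\mathcal P}_0$ and ${\mathcal Q}_\infty$) is indeed routine given \ref{canonmodcat} and the proof of \ref{ddplus}.
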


By \cite[13.1]{PreDefAddCat} (or, better, \cite[2.3]{PreRajShv}), $(T,-)$ and $T\otimes-$ induce equivalences of the respective functor categories.  If ${\mathcal D}$ is a definable category then we denote by ${\rm fun}({\mathcal D})$ the associated abelian functor category; we give a few words of explanation but see the references for details.  If ${\mathcal D}$ is locally finitely presented (e.g., as noted above, the category of modules supported on an open interval is so), then ${\rm fun}({\cal D})$ is the category of finitely presented functors from finite-dimensional objects in ${\mathcal D}$ to $k\mbox{-}{\rm mod}$ (rather, the extensions of these to functors on all of ${\mathcal D}$ which commute with direct limits).  In general a definable category need not be locally finitely presented (e.g., ${\mathcal D}_r$ is not) but every definable category is a definable subcategory of a locally finitely presented category and the category ${\rm fun}({\mathcal D})$ is obtained as a localisation of the category of functors on the larger category.  It can also be obtained directly as the category of those functors from ${\mathcal D}$ to $k\mbox{-}{\rm Mod}$ which commute with direct products and direct limits, alternatively as the category whose objects are the pp-pairs (of pp formulas modulo equivalence on all object of ${\mathcal D}$) and pp-definable maps between these (the category of ``'pp-imaginaries").  In particular, the lattice of pp formulas for ${\mathcal D}$ is naturally identified with the lattice of finitely generated subfunctors of the forgetful functor.  Therefore we have the following strengthening of \ref{widthatr}.

\begin{cor}\label{equivatrpp}\marginpar{equivatrpp}  The inverse natural equivalences in \ref{locequivatr} induce natural equivalences ${\rm fun}({\mathcal D}_r) \simeq {\rm fun}({\mathcal D}_{\overline{\sigma}(r)})$; in particular  ${\rm pp}_A/\sim_r \,\simeq\, {\rm pp}_B/\sim_{\overline{\sigma}(r)}$
\end{cor}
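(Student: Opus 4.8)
The plan is to obtain the corollary purely formally from the machinery of interpretation functors, applied to the equivalences of definable categories established in \ref{locequivatr}. First I would record that the restrictions of $(_AT,-)$ and $T\otimes_B-$ appearing in \ref{locequivatr} are not merely inverse equivalences of the definable categories ${\mathcal D}_r$ and ${\mathcal D}_{\overline{\sigma}(r)}$ but are interpretation functors in the sense of \cite{PreDefAddCat}: both commute with direct limits ($T\otimes_B-$ always, and $(_AT,-)$ since $T$ is finitely presented) and with direct products ($(_AT,-)$ since $\mathrm{Hom}$ out of a fixed module does, and $T\otimes_B-$ since $T$ is finitely presented), so \cite[18.2.22]{PreNBK} applies; this was already noted in the discussion preceding the statement.

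Second, I would invoke the general fact \cite[13.1]{PreDefAddCat} (or \cite[2.3]{PreRajShv}) that an interpretation functor which is an equivalence of definable categories induces an equivalence of the associated functor categories: precomposing a functor in ${\rm fun}({\mathcal D}_{\overline{\sigma}(r)})$ with $(_AT,-)\colon{\mathcal D}_r\to{\mathcal D}_{\overline{\sigma}(r)}$ yields a functor on ${\mathcal D}_r$ still commuting with products and direct limits, hence an object of ${\rm fun}({\mathcal D}_r)$, and together with the analogous construction along $T\otimes_B-$ this produces the asserted equivalence ${\rm fun}({\mathcal D}_r)\simeq{\rm fun}({\mathcal D}_{\overline{\sigma}(r)})$. (Recall from the discussion above that, since ${\mathcal D}_r$ need not be locally finitely presented, ${\rm fun}({\mathcal D}_r)$ is taken here to be the category of functors ${\mathcal D}_r\to k\mbox{-}{\rm Mod}$ commuting with products and direct limits, equivalently the category of pp-imaginaries of ${\mathcal D}_r$.) This settles the first statement.

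For the ``in particular'' clause, recall that ${\rm pp}_A/\sim_r$, the lattice of pp-definable subgroups of a generator $M(r)$ of ${\mathcal D}_r$, is identified with the lattice of finitely generated subfunctors of the forgetful functor in ${\rm fun}({\mathcal D}_r)$, and similarly on the $B$-side. I would make the induced correspondence explicit using the concrete shape of the interpretation functor $(_AT,-)$: if $\overline{t}=(t_1,\dots,t_k)$ generates $T$ then $(_AT,M)$ embeds in $M^k$ as the subgroup cut out, pp-definably over $A$, by the defining relations of $T$ (together with the pp-definable description of the $B={\rm End}(T)$-action), which is precisely the recipe $\phi\mapsto\phi^T$ of \ref{widthatr}. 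Thus the equivalence of functor categories realises the embedding ${\rm pp}_B/\sim_{\overline{\sigma}(r)}\hookrightarrow{\rm pp}_A/\sim_r$ of \ref{widthatr} and, being an equivalence, shows it to be onto; hence it is a lattice isomorphism.

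The step I expect to be the main obstacle is this last identification of the pp-lattices, since an equivalence of functor categories induced by interpretation functors need not send one forgetful functor to the other — indeed the tilting functors genuinely alter dimension vectors, via $\sigma_T$. What has to be checked by hand, using the explicit formulas for $(_AT,-)$ and $T\otimes_B-$ on objects and the fact that they are mutually inverse on ${\mathcal D}_r$ and ${\mathcal D}_{\overline{\sigma}(r)}$, is that the lattice of finitely generated subfunctors of the forgetful functor is nonetheless matched up correctly under the equivalence, so that the embedding of \ref{widthatr} becomes a bijection; the first two steps are essentially bookkeeping with results already quoted.
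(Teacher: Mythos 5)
Your argument is essentially the paper's own: the corollary is obtained there in exactly the same way, by noting that $(T,-)$ and $T\otimes_B-$ commute with direct products and direct limits, hence are interpretation functors (\cite[18.2.22]{PreNBK}), invoking \cite[13.1]{PreDefAddCat} (or \cite[2.3]{PreRajShv}) for the induced equivalence ${\rm fun}({\mathcal D}_r)\simeq{\rm fun}({\mathcal D}_{\overline{\sigma}(r)})$, and reading off the pp-lattice statement from the identification of ${\rm pp}/\sim$ with the lattice of finitely generated subfunctors of the forgetful functor. The bookkeeping you flag as the main obstacle --- matching the two forgetful functors under the equivalence, via the explicit recipe $\phi\mapsto\phi^T$ of \ref{widthatr} --- is not carried out in the paper either, which presents the ``in particular'' clause as immediate from that discussion, so your proposal follows the paper's route at essentially the paper's level of detail.
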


We show now that there are no proper nonzero definable subcategories of ${\mathcal D}_r$ if $r$ is irrational.

\begin{prop}\label{allequivwk}\marginpar{allequivwk}  Let $C$ be one of the algebras dealt with in Section \ref{sechomog} and let $r$ be a positive irrational.  Suppose that $M\in {\mathcal D}_r$.  Then $M$ generates ${\mathcal D}_r$ as a definable category.

In particular, ${\mathcal D}_r$ has no proper nonzero definable subcategories.
\end{prop}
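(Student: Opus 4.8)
The plan is to isolate and prove the following statement, which carries all the weight:

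($\ast$) \emph{if $\phi/\psi$ is a pp-pair open at $r$ and $M$ is a nonzero module in ${\mathcal D}_r$, then $\phi/\psi$ is open on $M$.}

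Granting ($\ast$), the proposition follows easily. Since a definable category is determined by its lattice of pp-pairs, a nonzero $M$ generates ${\mathcal D}_r$ as soon as $\sim_M$ and $\sim_r$ agree; and since $M\in{\mathcal D}_r$ we already have $\phi\sim_r\psi\Rightarrow\phi\sim_M\psi$. Conversely, if $\phi\sim_M\psi$ then $(\phi\wedge\psi)(M)=\phi(M)=\psi(M)$, so the pairs $\phi/(\phi\wedge\psi)$ and $\psi/(\phi\wedge\psi)$ are closed on $M$, hence, by ($\ast$) in contrapositive form, closed at $r$; thus $\phi\sim_r\phi\wedge\psi\sim_r\psi$. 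The final assertion is then immediate: a nonzero definable subcategory ${\mathcal D}'\subseteq{\mathcal D}_r$ contains some nonzero $M$, and then ${\mathcal D}_r$ is the definable category generated by $M$, which is contained in ${\mathcal D}'$.

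For ($\ast$) I would first normalise $\phi,\psi$. Using \ref{allopenl1}, fix $\epsilon>0$ with $\phi/\psi$ open on every module in $(r-\epsilon,r+\epsilon)$; using \ref{ppashom} and \ref{ppashominf} on $\phi$ and on $\psi$, obtain $\phi'\leq\phi$, $\psi'\leq\psi$ with free realisations $(P,p)$, $(N,u)$ such that $P,N\in{\rm add}({\mathcal P}_{r-\epsilon})$, ${\rm coker}(p),{\rm coker}(u)\in{\rm add}({\mathcal Q}_{r+\epsilon})$, and, after shrinking $\epsilon$, such that on every module supported on $(r-\epsilon,r+\epsilon)$ — in particular, by \ref{limit}, on $M$ and on every finite-dimensional module in $(r-\epsilon,r)$ — evaluation at $p$ (resp.\ at $u$) is a bijection onto $\phi(-)=\phi'(-)$ (resp.\ onto $\psi(-)=\psi'(-)$). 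In particular $\phi(M)=\phi'(M)$ and $\psi(M)=\psi'(M)$, so it suffices to exhibit an element of $\phi'(M)\setminus\psi'(M)$. Suppose, for a contradiction, that $\phi(M)=\psi(M)$. By \ref{limit} the nonzero $M$ has a nonzero finite-dimensional submodule in $(r-\epsilon,r)$, an indecomposable summand of which has a quasisimple regular socle $E$ in a tube of some slope $\gamma\in(r-\epsilon,r)$; so $E$ embeds in $M$, say by $\iota$. As $\phi/\psi$ is open on $E$, fix $a\in\phi(E)\setminus\psi(E)$; then $\iota(a)\in\phi(M)=\psi(M)=\psi'(M)$, so $\iota(a)=h(u)$ for some $h\colon N\to M$. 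Let $\theta$ generate the pp-type of $a$ in $E$ and form the pushout $L$ of $E\xleftarrow{a}C\xrightarrow{u}N$ with distinguished element $l$ (a free realisation of $\theta\wedge\psi'$, by \ref{ppmeetsum}); then $\iota,h$ induce $\lambda\colon L\to M$ with $\lambda(l)=\iota(a)\neq0$, and the pushout map $g\colon E\to L$ is monic since $\lambda g=\iota$.

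From here I would transcribe the endgame of the proof of \ref{Cwidth}. Split $L=L'\oplus L''$ into its part of slope $<r$ and its part of slope $>r$. Since $\lambda$ annihilates $L''$, the composite $E\to L\to L'$ is still monic and carries $a$ to a nonzero element, so $L'\neq0$. Choosing a rational $\gamma'\in(\gamma,r)$ and, by \ref{limit} and the separating property \ref{canonmodcat}, factoring $\iota$ through ${\rm add}({\mathcal T}(\rho))$ for a tube ${\mathcal T}(\rho)$ of intermediate slope selected via \ref{dimleap}, one uses the dimension estimate of \ref{dimleap} together with the slope computation for ${\rm coker}(E\to L')$ (exactly as in \ref{Cwidth}) to force $L'$ to have slope precisely $\gamma$. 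Then, taking $L'$ indecomposable of the form $E[k]$ with quasisimple socle $E$, and using that ${\rm coker}(u)\in{\rm add}({\mathcal Q}_{r+\epsilon})$ has no nonzero morphism to a module of slope $\leq r$, the $N$-component of the relevant composite factors through $E[k]\to E$, so that $u\in\psi(N)$ forces $a\in\psi(E)$ — a contradiction, completing the proof of ($\ast$).

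The hard part will be locating a suitable dimension-jump tube. Unlike in \ref{Cwidth} one is not free to choose it: a nonzero module of slope $r$ need not receive any nonzero morphism from a tube of the particular slope \ref{dimleap} provides, so such a tube is not directly available as a source of finite-dimensional submodules of $M$. The way round is to exploit that $M$, by \ref{limit}, has finite-dimensional submodules of slope arbitrarily close to $r$ from the left, to factor the embedding $\iota\colon E\hookrightarrow M$ through the required tube via the separating property, and only then to bring \ref{dimleap} to bear on the pushout $L$. Arranging the slopes $\gamma<\gamma''<\gamma'<r$ and the various $\epsilon$'s to be mutually consistent, and accounting for possible ${\mathcal P}_0$-summands of $P$ and $N$ (harmless, but to be tracked when splitting $L$), is the routine remainder; everything else is a transcription of the argument already given for \ref{Cwidth}.
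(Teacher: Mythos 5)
Your reduction to the statement $(\ast)$ and the construction of the pushout $L$ together with $\lambda\colon L\to M$, $\lambda g=\iota$, are fine, but the endgame does not transcribe from \ref{Cwidth}, and that is where the argument breaks. In \ref{Cwidth} the slope computation for ${\rm coker}(E\to L')$ rests entirely on the lattice hypothesis $\psi+\theta\leq_r\psi+\theta'$: via \ref{cofinrtl} it yields $\theta(Z)\subseteq(\psi+\theta')(Z)$ for \emph{every} finite-dimensional $Z$ in an interval $(r-\delta,r)$, hence \emph{every} morphism $E\to Z$ (in particular for every $Z$ of slope $\gamma'$) factors through $g$; that is what makes $(L',Z)\to(E,Z)$ surjective, kills ${\rm Ext}({\rm coker},Z)$ and forces the cokernel into $[0,\gamma')$. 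In your situation the only datum is that the single element $\iota(a)$ lies in $\psi'(M)$; the pair $\theta/(\theta\wedge\psi')$ is certainly not closed near the left of $r$ (it is open on $E$ itself), so the only maps you can factor through $g$ are $\iota$ and its corestrictions to large enough finite-dimensional submodules of $M$ containing the $\psi'$-witnesses -- not arbitrary maps $E\to Z$ with $Z$ of slope $\gamma'$. So ``the slope computation for ${\rm coker}(E\to L')$ exactly as in \ref{Cwidth}'' is simply unavailable. Nor does your use of \ref{dimleap} close the hole: a dimension-jump tube of intermediate slope $\gamma_0\in(\gamma,r)$ only excludes summands of $L$ with slope in $(\gamma_0,r)$, and since your $E$ has slope $\gamma<\gamma_0$ while $\dim L<\dim E+\dim N$ says nothing about the range $(\gamma,\gamma_0]$ (modules of slope $\gamma_0$ itself can be small -- that slope is chosen as a local minimum of dimension, not a maximum), you cannot force the summand of $L'$ receiving $E$ to lie in $E$'s own tube, and the final socle/$\,{\rm coker}(u)$ argument collapses.

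The repair is to put the test module \emph{inside} the \ref{dimleap} tube rather than treat that tube as an intermediate station, and this is what the paper does, by a different and more elementary mechanism: a nonzero finite-dimensional submodule $M_0\subseteq M$ of slope close to $r$ is factored, via \ref{limit} and the separating property, through ${\rm add}({\mathcal T}(\rho))$ for a tube supplied by \ref{dimleap}; one takes an indecomposable $M_2$ in that tube with nonzero composite $M_2\to M$ and $a\in\phi(M_2)\setminus\psi(M_2)$; then, using that witnesses for membership in $\psi$ can always be confined to a submodule of dimension at most a constant $d$ depending only on $\psi$, closedness of $\phi/\psi$ on $M$ would place the image of $M_2$ inside a submodule $X\subseteq M$ with $\dim X<\dim M_2+d$ and $a\in\psi(X)$, and \ref{dimleap} together with an induction on quasisimple length forces $M_2\to X$ to be a split embedding, contradicting $a\notin\psi(M_2)$. (Note the paper works with an arbitrary indecomposable $M_2$ of the tube precisely because the nonzero map to $M$ may kill the quasisimple socle, so one cannot in general arrange a quasisimple $E$ of the \ref{dimleap} tube embedding in $M$, as your sketch would need.) Your pushout could likely be salvaged after the same relocation, using $\lambda$ to kill the part of $L$ of slope $>r$ and the bound $\dim L<\dim M_2+\dim N$ against \ref{dimleap}, but as written there is a genuine gap at the central step.
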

\begin{proof} It has to be shown that if $M\in {\mathcal D}_r$ and if $\phi/\psi$ is a pp-pair which is open at $r$ then $\phi/\psi$ is open on $M$.  We use the fact that, since $C$ is finite-dimensional, if $N$ is any module and if $a\in \psi(N)$ then there is a submodule $N'$ of $N$ of length $< d$ ($d=1+{\rm dim}(R)$ times the number of variables in $\psi$ will do) such that $a\in \psi(N')$ (\cite[13.6]{PreBk} see the proof of \ref{cofinrtl}(iii)$\Rightarrow$(i)).

Choose $\epsilon>0$ such that the conclusion of \ref{allopenlh} is satisfied for $\phi/\psi$.  By \ref{limit}, $M$ is a union of submodules in $(r-\epsilon, r)$ and, by the factorisation property and \ref{dimleap}, there is a nonzero morphism $f:E[k] \rightarrow M$ where $E[k]$ is in a tube of slope $\gamma \in (r-\epsilon, r)$ which satisfies the conclusion of \ref{dimleap} for $d$ as above.  By factoring out the maximal $E[i]$ contained in ${\rm ker}(f)$ we may assume that $f$ restricted to the quasisimple socle, $E$, of $E[k]$, is not zero.  Since ${\rm im}(f\upharpoonright E)$ is finite-dimensional with indecomposable factors in $[\gamma,r)$ the last statement of \ref{dimleap} applies, so $f\upharpoonright E$ is monic.  Replace $E[k]$ by $E$ and choose $a\in \phi(E)\setminus \psi(E)$.

If we assume, for a contradiction, that $\phi/\psi$ is closed on $M$, then there is a finitely generated submodule $X$ of $M$ containing the image of $E$ in $M$ such that $f(a)\in \psi(X)$; using the fact from the beginning of the proof, we may assume that ${\rm dim}(X) < {\rm dim}(E)+d$.  Noting that each indecomposable summand of $X$ has slope in $[\gamma, r)$, it follows that the corestriction $E\rightarrow X$ of $f$ is a split embedding.  For otherwise it would induce a nonzero, non-isomorphic map from $E$ to an indecomposable summand of $X$, but that summand must have dimension $< {\rm dim}(E)+d$, in contradiction with choice of $\epsilon$ for \ref{dimleap}.  Therefore $E\rightarrow X$ is split and so, since $a\notin \psi(E)$, $a\notin \psi(X)$, a contradiction as required.
\end{proof}

By \ref{locequivatr} we can extend this to general tubular algebras.

\begin{theorem}\label{allequiv}\marginpar{allequiv}  Let $A$ be a tubular algebra and let $r$ be a positive irrational.  Then ${\mathcal D}_r$ has no proper nonzero definable subcategories.
\end{theorem}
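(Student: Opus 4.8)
The plan is to reduce to one of the algebras $C$ of Section~\ref{sechomog}, where the conclusion is \ref{allequivwk}, and then transport it back along shrinking functors. Recall from the start of Section~\ref{secmove} that for the given tubular algebra $A$ there is a shrinking $A$-module $T_1$ with $B = {\rm End}(T_1)$ a canonical tubular algebra, and then a shrinking $B$-module $T_2$ with $C = {\rm End}(T_2)$ one of the algebras $C(4,\lambda)$, $C(6)$, $C(7)$, $C(8)$; composing, we obtain a chain of (at most two) tilting functors from $A\mbox{-}{\rm Mod}$ down to $C\mbox{-}{\rm Mod}$. At each step the associated order-preserving bijection $\overline{\sigma}$ is a M\"obius transformation with rational coefficients, hence carries ${\mathbb Q}$ to ${\mathbb Q}$ and a finite positive irrational to a finite positive irrational; so after the composition the slope $r$ corresponds to a positive irrational $r'$ for $C$ (and $r'\neq 0,\infty$ since $\overline{\sigma}$ is injective and sends $0,\infty$ to rationals).

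Next I would invoke \ref{locequivatr} (equivalently \ref{equivatrpp}) at each step: the functors $(_AT,-)$ and $T\otimes -$ restrict to inverse equivalences between ${\mathcal D}_r$ and ${\mathcal D}_{\overline{\sigma}(r)}$ over the next algebra. Being interpretation functors in the sense of \cite{PreDefAddCat}, they commute with direct products and direct limits and preserve pure embeddings, so both they and their inverses carry definable subcategories to definable subcategories; equivalently, the equivalence ${\rm fun}({\mathcal D}_r) \simeq {\rm fun}({\mathcal D}_{\overline{\sigma}(r)})$ of \ref{equivatrpp} induces a bijection between the Serre subcategories of the two functor categories, hence between the definable subcategories of ${\mathcal D}_r$ and of ${\mathcal D}_{\overline{\sigma}(r)}$, respecting inclusion, the zero object and the whole category. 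Composing over the chain, the definable subcategories of ${\mathcal D}_r$ over $A$ correspond bijectively to those of ${\mathcal D}_{r'}$ over $C$; since, by \ref{allequivwk}, the latter has no proper nonzero definable subcategory, neither does ${\mathcal D}_r$ (which is itself nonzero, containing an indecomposable pure-injective).

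The point I expect to need most care is checking that the shrinking functors in the chain may be chosen proper, so that \ref{locequivatr} and \ref{equivatrpp} apply as stated. If a particular shrinking functor is not proper, one should instead observe that the equivalence ${\mathcal T}_\gamma \simeq {\mathcal T}_{\overline{\sigma}(\gamma)}$ between tubular families in \ref{tiltequiv} holds for every finite $\gamma$ without any properness hypothesis, and that ${\mathcal D}_r$, for a finite irrational $r$, is generated as a definable category by the tubular families of finite slope near $r$; so the same direct-limit argument used to prove \ref{locequiv} still produces inverse equivalences ${\mathcal D}_r \simeq {\mathcal D}_{\overline{\sigma}(r)}$, and the rest of the argument goes through unchanged.
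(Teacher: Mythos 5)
Your argument is correct and is essentially the paper's own: the paper proves \ref{allequiv} precisely by transporting \ref{allequivwk} along the shrinking-functor chain $A \rightarrow B \rightarrow C$ via the equivalences of \ref{locequivatr} (equivalently \ref{equivatrpp}), exactly as you describe. Your additional care about properness and about why the equivalences induce a bijection on definable subcategories only makes explicit what the paper leaves implicit in its one-line proof.
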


\begin{cor}\label{sloperstrong}\marginpar{sloperstrong}  Let $A$ be a tubular algebra and let $r$ be a positive irrational.  Then every $A$-module $M$ in ${\mathcal D}_r$ satisfies $(M,X)\neq 0$ for every finite-dimensional $X$ in $(r,\infty)$ and $(X,M)\neq 0$ for every $X$ in $[0,r)$.
\end{cor}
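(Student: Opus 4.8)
The plan is to deduce this from Theorem~\ref{allequiv} (that ${\mathcal D}_r$ has no proper nonzero definable subcategory) once one observes that, for a fixed finite-dimensional $X$, each of the conditions ``$(M,X)=0$'' and ``$(X,M)=0$'' cuts out a definable subcategory of $A$-$\mathrm{Mod}$. Intersecting with ${\mathcal D}_r$ and invoking \ref{allequiv} will then force the condition in question to be satisfied either only by $M=0$ or by every $M\in{\mathcal D}_r$, and a compactness argument of the same shape as the one in the proof of \ref{cofinrtl} will exclude the second alternative. (Here, as the statement requires, $M$ is taken nonzero.)

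For the definability: if $X$ has finite slope $s>r$ then $X$, hence $\tau^{-1}X$, lies in ${\mathcal T}_s$, so $\tau^{-1}X$ has projective dimension $\leq 1$ and, being finite-dimensional over the finite-dimensional algebra $A$, is FP$_2$; by \ref{AusReit} there is a natural isomorphism $(M,X)\cong{\rm Ext}^1(\tau^{-1}X,M)$, and by \ref{extfpfp}(b) the functor ${\rm Ext}^1(\tau^{-1}X,-)$ has the form $\phi(-)/\psi(-)$ for some pp-pair, so $\{M:(M,X)=0\}$ is the definable subcategory on which $\phi/\psi$ is closed. Dually, if $X$ has slope $<r$ (or $X\in{\mathcal P}_0$) then, $X$ being finitely presented, \ref{extfpfp}(a) gives a pp-pair $\phi'/\psi'$ with $(X,-)\cong\phi'(-)/\psi'(-)$, so $\{M:(X,M)=0\}$ is definable. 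Hence ${\mathcal D}_r\cap\{M:(M,X)=0\}$ and ${\mathcal D}_r\cap\{M:(X,M)=0\}$ are definable subcategories of ${\mathcal D}_r$, and by \ref{allequiv} each is either $0$ or all of ${\mathcal D}_r$.

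It remains to rule out the latter, i.e.\ to produce a nonzero module of slope $r$ that maps nontrivially to $X$ when $s>r$, and one that receives a nonzero map from $X$ when ${\rm slope}(X)<r$. I would argue exactly as in the implication (i)$\Rightarrow$(iii) of \ref{cofinrtl}: the Hom and Ext conditions cutting out ${\mathcal D}_r$, together with the sentence $\exists\bar v\,(\phi(\bar v)\wedge\neg\psi(\bar v))$ expressing ``$(M,X)\neq 0$'', are finitely satisfiable, because for every $\epsilon>0$ there is a finite-dimensional indecomposable $W$ of slope in $(r-\epsilon,r)$ with $(W,X)\neq 0$; by the Compactness Theorem \ref{cpct} there is then a module satisfying all of these, necessarily of slope $r$, with $(M,X)\neq 0$. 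The sub-claim $(W,X)\neq 0$ comes from the Euler form: since ${\rm slope}(W)<{\rm slope}(X)$, one has ${\rm Ext}^1(W,X)\cong(X,\tau W)=0$, because $\tau W$ has the same slope as $W$ and so lies in ${\mathcal P}_q$ for a rational $q$ with ${\rm slope}(W)<q<{\rm slope}(X)$ while $X\in{\mathcal Q}_q$, using the separating property of \ref{canonmodcat}; therefore $\dim(W,X)=\langle\underline{\rm dim}(W),\underline{\rm dim}(X)\rangle$, and writing $\underline{\rm dim}(W)=a_Wh_0+b_Wh_\infty$ and $\underline{\rm dim}(X)=a_Xh_0+b_Xh_\infty$ (radical vectors, these modules having positive finite slope) this equals $(a_Wb_X-a_Xb_W)\langle h_0,h_\infty\rangle$, which is positive since $b_W/a_W={\rm slope}(W)<{\rm slope}(X)=b_X/a_X$ and $\langle h_0,h_\infty\rangle>0$. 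The case ${\rm slope}(X)<r$ is symmetric, now with $X$ on the left; the boundary cases (slope $0$ or $X\in{\mathcal P}_0$, and, for the first part, slope $\infty$) are handled the same way, using the separating families at $0$ and $\infty$ and the standard structure of the tame concealed algebras $A_0$, $A_\infty$ to see the relevant Euler number is still positive.

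Putting this together: ${\mathcal D}_r\cap\{M:(M,X)=0\}$ is a definable subcategory of ${\mathcal D}_r$ that is not all of ${\mathcal D}_r$, hence is $0$ by \ref{allequiv}; so every nonzero $M\in{\mathcal D}_r$ has $(M,X)\neq 0$, and dually $(X,M)\neq 0$ when ${\rm slope}(X)<r$. I expect the main obstacle to be this last sub-claim — proving ${\rm Ext}^1(W,X)=0$ cleanly (choosing the right separating tubular family and using that $\tau$ preserves slopes and fixes ${\mathcal P}_0$) and checking positivity of the Euler number in the degenerate-slope cases — while everything else is a routine combination of \ref{allequiv}, \ref{extfpfp}, \ref{AusReit} and the compactness method already deployed in \ref{cofinrtl}.
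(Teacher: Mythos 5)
Your overall route is the one the paper intends: \ref{sloperstrong} is stated without proof as a consequence of \ref{allequiv}, and your reduction --- the classes $\{M:(M,X)=0\}$ (via \ref{AusReit} and \ref{extfpfp}(b), since $\tau^{-1}X$ lies in a tube of finite positive slope and so has projective dimension $\leq 1$) and $\{M:(X,M)=0\}$ (via \ref{extfpfp}(a)) are definable, their intersections with ${\mathcal D}_r$ are $0$ or all of ${\mathcal D}_r$ by \ref{allequiv}, and the latter alternative is excluded by the compactness argument already used for \ref{cofinrtl} --- is exactly how the corollary is meant to be read. Your worries about the degenerate cases largely evaporate, since the intervals $(r,\infty)$ and $[0,r)$ only involve modules in the tubular families, and your use of \ref{AusReit} with an infinite-dimensional second argument is at the same level of rigour as the paper's own use of it when defining slope.

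The one step that is wrong as written is your justification of the sub-claim $(W,X)\neq 0$: you write $\underline{\rm dim}(X)=a_Xh_0+b_Xh_\infty$ ``these modules having positive finite slope''. Having finite positive slope does not make the dimension vector radical: only modules whose quasilength is a multiple of the rank of their tube have radical dimension vector (a quasisimple $E$ in a non-homogeneous tube has $\chi_A(\underline{\rm dim}\,E)=1\neq 0$; by \ref{bijomega} the general shape is $ah_0+bh_\infty+y$ with $y\in\Omega$ possibly nonzero). Since $X$ is given, not chosen, your formula $\dim(W,X)=(a_Wb_X-a_Xb_W)\langle h_0,h_\infty\rangle$ is not available. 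The sub-claim is nevertheless true and easily repaired. On the Hom-to-$X$ side no Euler form is needed: since $(X,{\mathcal P}_s)=0$ where $s$ is the slope of $X$, \cite[Lemma 11]{ReiRin} (as used in \ref{limit}) says $X$ is generated by ${\mathcal T}_\gamma$ for every rational $\gamma<s$, so for any $\gamma\in(r-\epsilon,r)$ there is an indecomposable $W$ of slope $\gamma$ with $(W,X)\neq 0$. On the other side (slope of $X$ equal to $s<r$, wanting $(X,W)\neq 0$ for $W$ near $r$), either use the dual generation statement (apply the same lemma over the opposite algebra, which is again tubular), or keep $W$ homogeneous of slope $\gamma=b/a$, so that only $\underline{\rm dim}(W)=ah_0+bh_\infty$ need be radical and $\tau W\simeq W$ kills the Ext term; then, using $\iota(\underline{\rm dim}\,X)=s$ and $\langle z,h_i\rangle=-\langle h_i,z\rangle$ (valid because $\chi_A$ is positive semidefinite and $h_0,h_\infty$ are radical), one gets $\dim(X,W)=\langle\underline{\rm dim}\,X,\underline{\rm dim}\,W\rangle=a\,\langle h_\infty,\underline{\rm dim}\,X\rangle\,(s-\gamma)$, which is positive precisely because of the standard sign fact $\langle h_\infty,\underline{\rm dim}\,X\rangle<0$ (equivalently $\langle h_0,\underline{\rm dim}\,X\rangle>0$) for indecomposables of finite slope --- this fact, which you should cite or verify, is what replaces your radicality assumption, and the analogous computation handles $\dim(W,X)$ if you prefer the Euler-form route there too.
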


The next result is an extension of \ref{cofinrtl} and \ref{allopenl1} to the general case.

\begin{cor}\label{allopenrtl}\marginpar{allopenrtl} Let $A$ be any tubular algebra. Let $ \phi /\psi  $ be any pp-pair and let $ r $ be a positive irrational. Then the following are equivalent:

\noindent (i) $\phi /\psi $ is open near the left of $ r$;

\noindent (ii) $\phi /\psi $ is open near the right of $ r$;

\noindent (iii) $\phi /\psi $ is open at $ r$, that is, open on some module with slope $r$;

\noindent (iv) $\phi /\psi $ is open at $ r$, that is, open on every (nonzero) module with slope $r$;

\noindent (v) there exists $ \epsilon >0 $ such that $ \phi /\psi  $ is open on every module in $ (r-\epsilon ,r)$;

\noindent (vi) there exists $ \epsilon >0 $ such that $ \phi /\psi  $ is open on every module in $ (r,r+\epsilon )$.

\noindent (vii) there exists $ \epsilon >0 $ such that $ \phi /\psi  $ is open on every module in $ (r-\epsilon,r+\epsilon )$.
\end{cor}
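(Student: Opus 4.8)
The plan is to reduce the corollary by soft arguments to a single implication, and then to obtain that implication by transporting \ref{allopenl1} from the special algebras of Section \ref{sechomog} to $A$ along shrinking functors. First the soft part: the implications (vi)$\Rightarrow$(iv), (vi)$\Rightarrow$(v), (iv)$\Rightarrow$(i) and (v)$\Rightarrow$(ii) are immediate from the definitions, since an interval on which $\phi/\psi$ is open certainly contains cofinally many modules (both to the left and to the right of $r$) on which it is open; and (i)$\Leftrightarrow$(ii)$\Leftrightarrow$(iii) is \ref{cofinrtl}, which is already available for an arbitrary tubular algebra. So everything comes down to proving that (iii) implies (vi): if $\phi/\psi$ is open at $r$ then there is $\epsilon>0$ with $\phi/\psi$ open on every finite-dimensional module in $(r-\epsilon,r+\epsilon)$.

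For this I would choose a shrinking $A$-module $T$ with $B={\rm End}(T)$ a canonical tubular algebra, and then a shrinking $B$-module $T'$ with $C={\rm End}(T')$ one of $C(4,\lambda)$, $C(6)$, $C(7)$, $C(8)$; we may take the tilting functors $(T,-)$ and $(T',-)$ proper. Let $s$ be the image of $r$ under the composite order-preserving bijection $\overline{\sigma}_{T'}\circ\overline{\sigma}_T$; since each $\overline{\sigma}$ is a fractional linear transformation with rational coefficients, $s$ is again a positive irrational. Applying \ref{locequiv}, \ref{locequivatr} and \ref{equivatrpp} twice, once to $T$ and once to $T'$ (and shrinking the neighbourhoods as needed), yields $\epsilon,\delta>0$ together with an interpretation functor giving an equivalence ${\mathcal D}^A_{(r-\epsilon,r+\epsilon)}\simeq{\mathcal D}^C_{(s-\delta,s+\delta)}$ that sends finite-dimensional modules to finite-dimensional modules compatibly with slopes (via $\overline{\sigma}_{T'}\circ\overline{\sigma}_T$), restricts to the equivalence ${\mathcal D}^A_r\simeq{\mathcal D}^C_s$, and induces an equivalence of the associated functor categories, hence an isomorphism of the two lattices of pp-pairs.

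Now I would transport $\phi/\psi$ across. Restricting it to ${\mathcal D}^A_{(r-\epsilon,r+\epsilon)}$ gives an object of that functor category, which corresponds to a pp-pair $\phi'/\psi'$ over $C$ (extend it from the definable subcategory however one likes). Because ${\mathcal D}^A_r$ corresponds to ${\mathcal D}^C_s$ under the equivalence, $\phi/\psi$ is open at $r$ precisely when $\phi'/\psi'$ is open at $s$; so hypothesis (iii) forces $\phi'/\psi'$ open at $s$. By \ref{allopenl1} there is then $\delta'$ with $0<\delta'\le\delta$ such that $\phi'/\psi'$ is open on every module in $(s-\delta',s+\delta')$, and pulling this back along the interpretation functor — which sends finite-dimensional modules to finite-dimensional modules compatibly with slopes — shows $\phi/\psi$ open on every finite-dimensional module in the corresponding interval $(r-\epsilon',r+\epsilon')$. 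That is (vi).

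I expect the only genuine work to be in the transfer step. Since $\phi/\psi$ is given over $A$ and has no a priori counterpart over $C$, one must really use the equivalence of functor categories supplied by the interpretation-functor machinery (\cite[13.1]{PreDefAddCat}, as in the proof of \ref{equivatrpp}) rather than any naive substitution of formulas; one must check that ``open at an irrational'' and ``open on a small interval around it'' are both intrinsic to the functor category localised at the relevant neighbourhood, hence preserved by the equivalence, while keeping careful track of the reparametrisation $\overline{\sigma}$ so that neighbourhoods of $r$ match neighbourhoods of $s$. The point that the shrinking functors may be taken proper, so that \ref{locequiv} applies, is a routine appeal to \cite[Chapter 5]{RinTame} (and in any case does not affect matters near a positive irrational, where the extremal families ${\mathcal P}_0$ and ${\mathcal Q}_\infty$ play no role).
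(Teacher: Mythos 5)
Your proposal is correct and follows essentially the same route as the paper: reduce everything to (iii)$\Rightarrow$(vi), transfer the pp-pair along the (composed) shrinking/interpretation functors of \ref{locequiv}--\ref{equivatrpp} to one of the special algebras $C$, and conclude from \ref{cofinrtl} and \ref{allopenl1}. The only difference is that you spell out explicitly (the soft implications, the composition of the two shrinking functors, the need for the functor-category equivalence rather than a naive substitution of formulas, and the irrationality of $\overline{\sigma}(r)$) what the paper's proof leaves implicit.
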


\begin{proof} By \ref{locequiv} there is an algebra $C$ of one of the forms $C(4,\lambda)$, $C(6)$, $C(7)$, $C(8)$ such that, if we fix a sufficiently small $\epsilon>0$, then the category of $A$-modules supported on $(r-\epsilon, r+\epsilon)$ is equivalent to the category of $C$-modules supported on the open interval $(\overline{\sigma}(r-\epsilon), \overline{\sigma}(r+\epsilon))$.  The restrictions of the relevant tilting functor $(T,-)$ and its inverse $T\otimes -$, to the definable subcategories ${\mathcal D}_{(r-\epsilon, r+\epsilon)}$ and ${\mathcal D}_{(\overline{\sigma}(r-\epsilon), \overline{\sigma}(r+\epsilon))}$ are, as noted above, interpretation functors.  Therefore, if $\phi/\psi$ is a pp-pair for $A$-modules then, by \cite[13.1, 25.4]{PreDefAddCat}, there is a corresponding pair $\phi'/\psi'$ of pp formulas for $C$-modules such that $\phi/\psi$ is open on the $A$-module $M\in {\mathcal D}_{(r-\epsilon, r+\epsilon)}$ iff $\phi'/\psi'$ is open on the $C$-module $(T,M) \in {\mathcal D}_{(\overline{\sigma}(r-\epsilon), \overline{\sigma}(r+\epsilon))}$.  The equivalence of the conditions therefore follows from \ref{cofinrtl} and \ref{allopenl1} and, for the equivalence of (iii) and (iv), \ref{allequiv}.
\end{proof}

If $A$ is a tubular algebra then denote by ${\rm Zg}^+_A$ the closed subset of the Ziegler spectrum consisting of all indecomposable pure-injectives with slope in $(0,\infty)$.  Let us consider the fibres of the slope map on ${\rm Zg}^+_A$, and say that a pp-pair $\phi/\psi$ is {\bf uniformly open}, respectively {\bf closed}, {\bf at} $r$ if $\phi/\psi$ is open, resp. closed, on every module of slope $r$.

\begin{cor}\label{phipsiopen}\marginpar{phipsiopen} Let $\phi/\psi$ be a pp-pair over a tubular algebra $A$.  Then for all but finitely many $r\in {\mathbb R}^+$, $\phi/\psi$ is either uniformly open at $r$ or uniformly closed at $r$.  Furthermore, the set of $r$ at which $\phi/\psi$ is uniformly open is the union of finitely many rational points and finitely many open intervals with rational endpoint(s).
\end{cor}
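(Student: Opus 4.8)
The plan is to combine the irrational/rational dichotomy for $\phi/\psi$ coming from \ref{allequiv} with a piecewise-linear description, valid over all of $\mathbb R^+$, of $\dim(\phi/\psi)$ as a function of the dimension vector. First, at an irrational $r$ the behaviour is already forced: by \ref{allequiv} the definable category $\mathcal D_r$ has no proper nonzero definable subcategory, so the subcategory of $\mathcal D_r$ on which $\phi/\psi$ is closed is either all of $\mathcal D_r$, and then $\phi\sim_r\psi$ and $\phi/\psi$ is uniformly closed at $r$, or else is $0$, and then $\phi/\psi$ is open on every nonzero module of slope $r$, i.e.\ uniformly open at $r$. Hence every exceptional $r$ in the first assertion is rational, and, writing $S=\{r\in\mathbb R^+:\phi/\psi\ \text{uniformly open at}\ r\}$, it remains to bound the number of rationals at which $\phi/\psi$ is neither uniformly open nor uniformly closed, and to identify the shape of $S$.

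Next I would build a finite skeleton. Fix free realisations of $\phi$ and of $\psi$: these are fixed finite-dimensional modules, so only finitely many rationals occur as slopes of their indecomposable summands, and of the cokernel modules appearing in \ref{ppashom}. Carrying out the construction of \ref{vdotdim} with these fixed data shows that there are finitely many rationals $0<q_1<\dots<q_N$ such that on each open interval $J_j$ of the induced partition of $\mathbb R^+$ (that is, $(0,q_1),(q_1,q_2),\dots,(q_N,\infty)$) there is a single vector $v_j\in{\rm K}_0(A)$ with $\dim(\phi/\psi)(X)=v_j\cdot\underline{\rm dim}(X)$ for every finite-dimensional $X$ whose slope lies in $J_j$. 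Put $L_j(t)=v_j\cdot h_0+t\,v_j\cdot h_\infty$. Since a module in a homogeneous tube of slope $\gamma$ has dimension vector $c(h_0+\gamma h_\infty)$, evaluation of $\phi/\psi$ there has dimension $c\,L_j(\gamma)\ge0$; thus $L_j\ge0$ on $\overline{J_j}$, and, being affine and nonnegative on an open interval, $L_j$ is either identically zero on $\overline{J_j}$ or strictly positive throughout $J_j$.

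On a piece with $L_j\equiv0$ I would show $v_j=0$. The dimension vectors of finite-dimensional modules with slope in $J_j$ span ${\rm K}_0(A)\otimes\mathbb Q$; for the algebras $C$ of Section \ref{sechomog} this is a finite check using \ref{radchiC}, \ref{bijomega} and the explicit tubular families there, and it transfers to an arbitrary tubular algebra via \ref{locequiv}. Hence $v_j\cdot\underline{\rm dim}(X)=0$ for all such $X$ forces $v_j=0$, so $\phi\sim_X\psi$ for every finite-dimensional $X$ of slope in $J_j$, and since the finite-dimensional points are dense in each rational fibre of the Ziegler spectrum and ``$\phi/\psi$ closed'' is a closed condition there, $\phi/\psi$ is uniformly closed at every slope in $J_j$. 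On a piece with $L_j>0$ on $J_j$: at an irrational $\gamma\in J_j$, $\phi/\psi$ is open on homogeneous modules of slopes $\gamma'\in J_j$ with $\gamma'<\gamma$, hence open near the left of $\gamma$, hence open at $\gamma$ by \ref{cofinrtl}, hence uniformly open at $\gamma$ (again by \ref{allequiv}); while at a rational $\gamma\in J_j$ of sufficiently large denominator the estimates \ref{nravg} and \ref{dimleap} (established for the $C$'s and transferred by \ref{locequiv} and the interpretation argument of \ref{allopenrtl}) give $v_j\cdot\underline{\rm dim}(Y)>0$ already on each quasisimple $Y$ of slope $\gamma$, hence on every finite-dimensional module of slope $\gamma$, and the linear growth of $v_j\cdot\underline{\rm dim}(E[n])$ forces openness on the adic, Pr\"ufer and generic modules of slope $\gamma$ too; so $\phi/\psi$ is uniformly open at all but finitely many rationals of $J_j$.

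Finally I would assemble: $S$ is the union of those finitely many intervals $J_j$ on which $L_j>0$, with finitely many rational points deleted and possibly some of the finitely many breakpoints $q_i$ adjoined. Deleting finitely many rational points from an open interval with rational endpoints again gives a finite union of open intervals with rational endpoints, so $S$ has the asserted form, and the rationals at which $\phi/\psi$ is split are contained in the finite set consisting of the $q_i$ together with those finitely many interior exceptions. The step I expect to be genuinely delicate is this last case analysis: controlling $\phi/\psi$ on the non-homogeneous tubes, and especially on the infinite-dimensional adic, Pr\"ufer and generic modules of rational slope, so that each interval $J_j$ contributes only finitely many split rationals. That is exactly where the quantitative dimension estimates \ref{nravg}--\ref{dimleap} for the special algebras $C$, together with the transfer along the interpretation functors of \ref{locequiv}, are indispensable, and where one must watch how the exceptional tubes interact with the affine functions $L_j$ near the breakpoints $q_i$.
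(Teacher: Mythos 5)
Your treatment of the irrational points via \ref{allequiv} and your ``finite skeleton'' (finitely many rational breakpoints coming from the free realisations, with a single vector $v_j$ computing ${\rm dim}(\phi/\psi)$ on each complementary interval $J_j$, as in \ref{ppashom} and \ref{vdotdim}) are sound and close to what the paper does. The genuine gap is in the case $L_j>0$, at exactly the point you yourself flag and then defer: the claim that for rationals $\gamma\in J_j$ of ``sufficiently large denominator'' the estimates \ref{nravg} and \ref{dimleap} force $v_j\cdot\underline{\rm dim}(Y)>0$ on every quasisimple $Y$ of slope $\gamma$. Write $\gamma=b/a$ in lowest terms. By \ref{nravg} the homogeneous part of $v_j\cdot\underline{\rm dim}(Y)$ has size roughly $\tfrac{k_i}{\langle h_0,h_\infty\rangle}\,(a\,v_j\cdot h_0+b\,v_j\cdot h_\infty)=\tfrac{k_i}{\langle h_0,h_\infty\rangle}\,a\,L_j(\gamma)$, while the correction coming from $y\in\Omega$ is bounded independently of $\gamma$. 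Since $L_j$ is affine and nonnegative on $J_j$, in the case $L_j>0$ its (rational) zero $-v_j\cdot h_0/v_j\cdot h_\infty$ can sit at an endpoint of $J_j$, and then the positive integer $a\,v_j\cdot h_0+b\,v_j\cdot h_\infty$ does not grow with the denominator: it takes a fixed small value for infinitely many coprime pairs $(a,b)$ with $b/a\in J_j$ accumulating at that endpoint. So a large denominator does not make the homogeneous part dominate the bounded $y$-correction, and your argument does not bound the number of ``split'' rationals in $J_j$. This is precisely where the paper argues differently: it adds the finitely many rational zeros $-v\cdot h_0/v\cdot h_\infty$ (as $v$ runs over the finitely many possible vectors) to the exceptional set and, for every other $r$, uses \ref{bijomega} together with \ref{ineqomega} to show that only finitely many dimension vectors $c(h_0+\gamma h_\infty)+y$ annihilated by $v$ can have slope near $r$; shrinking the neighbourhood then gives openness on \emph{all} finite-dimensional modules with slope in a whole neighbourhood of each non-exceptional $r$, rather than attempting a denominator-by-denominator estimate.

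Two secondary points. In the case $L_j\equiv 0$ you rely on the unproved assertion that the dimension vectors of modules with slope in $J_j$ span ${\rm K}_0(A)\otimes{\mathbb Q}$ in order to conclude $v_j=0$; the paper avoids needing any such spanning statement by splitting on $v=0$ versus $v\neq 0$ at each non-exceptional rational instead of on $L_j$. And openness on the adic, Pr\"ufer and generic modules of a rational slope does not follow merely from ``linear growth'' of $v_j\cdot\underline{\rm dim}(E[n])$: a pp-pair open on every term of a direct system can be closed on the direct limit, so this step needs a genuine argument (via the description of the definable category generated by a tube, or an elementary-equivalence reduction to the indecomposable pure-injectives of that slope); the paper is terse here too, but your sentence as written is an assertion, not a proof.
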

\begin{proof} Let $U$ be the set of all positive reals such that $\phi/\psi$ is open on each module with slope $r$. By \ref{allopenrtl} each irrational in $U$ is contained in an open interval contained in $U$. Furthermore, if we examine the proof of \ref{ppashom} then we see that there are only finitely many positive rationals $r$ (being slopes of indecomposable summands of the modules appearing) where the argument does not work; furthermore, there are only finitely many possible pointed modules $(M',m')$ produced by that proof (as $r$ varies).  If $r\in {\mathbb Q}^+$ is not one of the above exceptions then there is also a vector $v$ as in \ref{vdotdim} with ${\rm dim}(\phi/\psi)(X) = v\cdot\underline{\rm dim}(X)$ for all $X$ with slope in a neighbourhood of $r$.  Since that vector is defined in terms of the pointed modules involved, again there are only finitely many possibilities for $v$.  Given a non-exceptional rational $r$, if the vector $v$ as in \ref{vdotdim} is $0$, then $\phi/\psi$ is closed in an open neighbourhood of $r$.  We examine the case where $v\neq 0$.

For this part we again revert to the case of one of the algebras of the special forms $C$ used before.  We fix a non-exceptional rational $r$ and take $\epsilon$ and a vector $v\neq 0$ for measuring the dimension of $\phi/\psi$ as in \ref{vdotdim} (extended  to most rationals as discussed above).  Suppose that $X\in C\mbox{-}{\rm ind}$ has slope in $(r-\epsilon, r+\epsilon)$.  By \ref{bijomega} there are $c\in {\mathbb N}$, $\gamma\in {\mathbb Q}^\infty_0$ and $y\in \Omega \cup \{0\}$ such that $\underline{\rm dim}(X)= c(h_0+\gamma h_\infty)+y$ (and $c\gamma \in {\mathbb N})$.  The slope of $X$ therefore is $-\dfrac{c\gamma \langle h_0,h_\infty\rangle + \langle h_0,y\rangle}{c \langle h_\infty,h_0\rangle + \langle h_\infty,y\rangle}$ and, if $v\cdot \underline{\rm dim}(X)=0$, then we compute that $\gamma = \dfrac{-\frac{1}{c}v.y-v.h_0}{v.h_\infty}$.   By \ref{ineqomega} we may choose $\delta<\epsilon$ so that if the slope of $X$ as above is in $(r-\delta, r+\delta)$ then $\gamma = \iota(ch_0+c\gamma h_\infty)$ is in $(r-\epsilon, r+\epsilon)$.  We may assume that $r\neq -\dfrac{-v.h_0}{v.h_\infty}$ since this excludes only finitely many values of $r$ (as $v$ varies among the possible values discussed above) so, by choosing $\epsilon$ (and correspondingly $\delta$) small enough, we may assume that there are only finitely many values of $c$ such that $\dfrac{-\frac{1}{c}v.y-v.h_0}{v.h_\infty} \in (r-\epsilon, r+\epsilon)$, that is, only finitely many values of $\gamma$ corresponding to dimension vectors, $\underline{\rm dim}(X) = c(h_0+\gamma h_\infty)+y$ of $X$ with slope in $(r-\delta, r+\delta)$ such that $v.{\rm dim}(X)=0$.  Therefore, by choosing $\epsilon'$ small enough, we find a neighbourhood $(r-\epsilon', r+\epsilon')$ of $r$ such that if the slope of $X$ is in this interval then $v\cdot \underline{\rm dim}(X)\neq 0$ and hence $\phi(X)/\psi(X)\neq 0$.  That is, apart from finitely many values of $r\in {\mathbb Q}^+$, if $\phi/\psi$ is open at some module of slope $r$ then $\phi/\psi$ is open on every module with slope in an open neighbourhood of $r$.  The statement of the result then follows for these algebras $C$ and so, by using arguments as in the proof of \ref{allopenrtl}, for arbitrary tubular algebras.

\end{proof}

\end{document}